\theoremstyle{plain}
\newtheorem{theorem}{Theorem}
\newtheorem{lemma}[theorem]{Lemma}
\newtheorem{proposition}[theorem]{Proposition}
\newtheorem{corollary}[theorem]{Corollary}
\numberwithin{theorem}{section}
\numberwithin{equation}{theorem}
\theoremstyle{definition}
\newtheorem{definition}[theorem]{Definition}
\newtheorem{example}[theorem]{Example}
\newtheorem{remark}[theorem]{Remark}
\newtheorem{question}[theorem]{Question}
\newtheorem*{question*}{Question}
\newcommand{\im}{\text{\upshape im}}
\DeclareMathOperator{\divv}{div}
\newcommand{\lan}{\langle}
\newcommand{\ran}{\rangle}
\DeclareMathOperator{\Hom}{Hom}
\DeclareMathOperator{\Kdim}{Kdim}
\newcommand\kk{{\Bbbk}}
\begin{document}

\title[Twists and related properties]
{Twists of graded Poisson algebras\\
and related properties}

\author{Xin Tang, Xingting Wang and James J. Zhang}

\address{Tang: Department of Mathematics \& Computer Science, 
Fayetteville State University, Fayetteville, NC 28301,
USA}

\email{xtang@uncfsu.edu}

\address{Wang: Department of Mathematics, Howard University, Washington, 
DC, 20059, USA} 

\email{xingting.wang@howard.edu}

\address{Zhang: Department of Mathematics, Box 354350,
University of Washington, Seattle, Washington 98195, USA}

\email{zhang@math.washington.edu}

\begin{abstract}
We introduce a Poisson version of the graded twist of a 
graded associative algebra and prove that every graded 
Poisson structure on a connected graded polynomial ring 
$A:=\Bbbk[x_1,\ldots,x_n]$ is a graded twist of a
unimodular Poisson structure on $A$, namely, if $\pi$ is 
a graded Poisson structure on $A$, then $\pi$ has a 
decomposition
$$\pi~=~\pi_{unim} +\frac{1}{\sum_{i=1}^n \deg x_i} E\wedge 
{\mathbf m}$$
where $E$ is the Euler derivation, $\pi_{unim}$ is the 
unimodular graded Poisson structure on $A$ corresponding to 
$\pi$, and ${\mathbf m}$ is the modular derivation of 
$(A,\pi)$. This result is a generalization of the same result 
in the quadratic setting. The rigidity of graded twisting, 
$PH^1$-minimality, and $H$-ozoneness are studied. As an 
application, we compute the Poisson cohomologies of the
quadratic Poisson structures on the polynomial ring of three 
variables when the potential is irreducible, but not 
necessarily having isolated singularities.
\end{abstract}

\subjclass[2010]{Primary 17B63, 17B40, 16S36, 16W20}

\keywords{modular derivation, Poisson algebra, graded twist}

%\thanks{ }

%\date{\today}
\maketitle

%\tableofcontents

% \setcounter{section}{-1}
\section*{Introduction}
\label{xxsec0}

Poisson algebras have recently been studied extensively by 
many researchers, see e.g., \cite{Ba2, Ba3, Go1, Go2, GLa, 
GLe, JO, LS, LuWW1, LuWW2, LvWZ1}, with topics related to 
(twisted) Poincar{\' e} duality and the modular derivation, 
Poisson Dixmier-Moeglin equivalences, Poisson enveloping 
algebras and so on. Poisson algebras have been used in the 
representation theory of PI Sklyanin algebras \cite{WWY1, WWY2}. 
Isomorphism problem and cancellation problem in the Poisson 
setting have been investigated in \cite{GW1, GW2}.

Let $\Bbbk$ be a base field. Except for Sections \ref{xxsec1}
and \ref{xxsec2}
we further assume that $\Bbbk$ is of characteristic zero.
Quadratic Poisson structures on $\Bbbk[x_1,\ldots,x_n]$ with
$\deg(x_i)=1$ for all $i=1,\cdots,n$ have been playing an 
important role in several other subjects, see papers \cite{LX} 
by Liu-Xu, \cite{Bo} by Bondal, and \cite{Py} by Pym. 
Deformation quantizations of such Poisson structures are 
homogeneous coordinate rings of quantum ${\mathbb P}^{n-1}$s. 
In general, such a deformation quantization is skew-Calabi-Yau; 
while it is Calabi-Yau if and only if the Poisson structure on 
$\Bbbk[x_1,\ldots,x_n]$ is unimodular \cite{Do}. 

In additional to the quadratic case, we are interested in 
weighted Poisson structures on $\Bbbk[x_1,\ldots,x_n]$ where 
$\deg x_i>0$ for all $i=1,\ldots,n$. Note that deformation 
quantizations of weighted Poisson structures are homogeneous 
coordinate rings of weighted quantum ${\mathbb P}^{n-1}$s. If 
$\pi$ is a graded Poisson structure on $\Bbbk[x_1,\ldots,x_n]$ 
where $\sum_{i=1} \deg x_i\neq 0$ in the base field $\Bbbk$, 
we prove that $\pi$ has a decomposition
\begin{equation}
\label{E0.0.1}\tag{E0.0.1}
\pi~=~\pi_{unim} +\frac{1}{\sum_{i=1}^n \deg x_i} E\wedge 
{\mathbf m}
\end{equation}
where $E$ is the Euler derivation, $\pi_{unim}$ is the 
unimodular graded Poisson structure on $\Bbbk[x_1,\ldots,x_n]$ 
corresponding to $\pi$, and ${\mathbf m}$ is the modular 
derivation of $(\Bbbk[x_1,\ldots,x_n],\pi)$. If $\deg x_i=1$ 
for all $i$, \eqref{E0.0.1} was observed by Bondal \cite{Bo}, 
Liu-Xu \cite{LX}, and in the book \cite[Theorem 8.26]{LPV}. 
Similar to the ideas in \cite{Py}, to classify all graded 
Poisson structures on polynomial rings where $\deg x_i>0$, it 
is a good idea to first classify unimodular ones.

To prove \eqref{E0.0.1}, we will use a Poisson version of the 
graded twist \cite{Zh}. Let $A$ be a ${\mathbb Z}$-graded 
Poisson algebra such that both the commutative multiplication 
$\cdot$ and the Poisson bracket $\pi:=\{-,-\}$ are 
graded of degree 0. If $a\in A$ is homogeneous, we use $|a|$ 
to denote its degree. Define the Euler derivation $E$ of $A$ by
$$E(a)~=~|a|\, a$$
for all homogeneous element $a\in A$. Let $\delta$ be a graded 
Poisson derivation of $A$. We define a new Poisson structure, 
denoted by $\pi_{new}:=\{-,-\}_{new}$, to be
\begin{equation}
\label{E0.0.2}\tag{E0.0.2}
\{a,b\}_{new}~:=~\{a,b\}+E(a) \delta(b)-\delta(a) E(b)
\end{equation}
for all homogeneous elements $a,b\in A$, or equivalently
$$\pi_{new}~:=~\pi+E\wedge \delta.$$
We will show that $(A,\cdot, \{-,-\}_{new})$ (or $(A,\pi_{new})$)
is a graded Poisson algebra in Section 2 and it is denoted by 
$A^{\delta}$.

Now we state some results. Let $A$ be a polynomial algebra 
$\Bbbk[x_1,\ldots,x_n]$ and let $\delta$ be a derivation of $A$. 
By \cite[(4.21)]{LPV}, the {\it divergence} of $\delta$ is 
defined to be
\begin{equation}
\label{E0.0.3}\tag{E0.0.3}
\divv (\delta) ~:=~\sum_{i=1}^n 
\frac{\partial \delta(x_i)}{\partial x_i},
\end{equation}
which is independent of the choices of generators
$\{x_1,\ldots,x_n\}$ [Definition \ref{xxdef1.1} and
Lemma \ref{xxlem1.2}]. For a more general Poisson algebra,
the definition of $\divv(\delta)$ will be given in Definition 
\ref{xxdef1.1} which is dependent on the volume form. Recall 
a {\it Hamiltonian derivation} of a Poisson algebra $A$ is 
given by $H_a:=\{a,-\}$ for any $a\in A$. The 
{\it modular derivation} of $A$ is defined by 
\begin{equation}
\label{E0.0.4}\tag{E0.0.4}
{\mathbf m}(a)~:=~-\divv (H_a)
\end{equation}
for all $a\in A$ [Definition \ref{xxdef1.4}]. We need the 
following lemma that concerns the divergence of the modular 
derivation. 

\begin{lemma} 
\cite[Corollary 3.10]{Wa} \cite[Proposition 4.17]{LPV}
\label{xxlem0.1}
%Suppose that ${\text{char}}\; \Bbbk=0$. 
Let $A$ be a Poisson 
algebra with volume form $\nu$ and ${\mathbf m}$ be the modular 
derivation of $A$ corresponding to
$\nu$. Then $\divv({\mathbf m})=0$.
\end{lemma}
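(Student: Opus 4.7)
The plan is to compute $\divv(\mathbf{m})$ directly in local coordinates, reducing the identity to a two-fold use of the antisymmetry of the Poisson bivector against the symmetry of mixed second partials. Since the divergence will be shown to be independent of the chosen generating set by Lemma \ref{xxlem1.2}, one may, in the polynomial setting, trivialize $\nu$ as $dx_1 \wedge \cdots \wedge dx_n$; a general Poisson algebra equipped with a volume form reduces to this coordinate picture once $\nu$ has been fixed.

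Write the Poisson tensor as $\pi = \sum_{i<j} \pi_{ij}\, \partial_i \wedge \partial_j$ with $\pi_{ij} = -\pi_{ji}$, so that the Hamiltonian derivation is
\[
H_a \;=\; \sum_{i,k} \pi_{ki}(\partial_k a)\,\partial_i .
\]
Expanding $\divv(H_a)$ via Leibniz produces the term $\sum_{i,k} \pi_{ki}\,\partial_i\partial_k a$, which vanishes because $\pi_{ki}$ is antisymmetric in $(i,k)$ while $\partial_i\partial_k$ is symmetric. The surviving contribution yields the explicit formula $\mathbf{m} = \sum_k m_k\,\partial_k$ with $m_k = \sum_i \partial_i \pi_{ik}$. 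Then
\[
\divv(\mathbf{m}) \;=\; \sum_{i,k} \partial_k\partial_i\, \pi_{ik},
\]
and a second application of the same symmetry trick---swap $i \leftrightarrow k$, apply $\pi_{ki} = -\pi_{ik}$, and commute the partials---shows this expression equals its own negative, hence is zero.

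The main obstacle is intrinsic rather than computational: one must know that the divergence so obtained is independent of the choice of generators used to trivialize $\nu$, which is precisely the content of Lemma \ref{xxlem1.2}; once that invariance is in hand, the two-swap computation above is self-contained and completes the argument. An alternative route for a general Poisson algebra is to identify $\divv(\mathbf{m})\,\nu$ with $L_{\mathbf{m}}\nu = d\,i_{\mathbf{m}}\nu$ and to verify that $i_{\mathbf{m}}\nu$ is closed; this reflects Koszul's observation that the Poisson boundary operator on $\Omega^{\bullet}$ squares to zero.
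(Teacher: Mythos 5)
Your coordinate computation is correct as far as it goes, and in the polynomial case it is a genuinely more elementary argument than the paper's: the paper simply quotes \cite[Corollary 3.10]{Wa} (equivalently, via Lemma \ref{xxlem3.4}, the identity ${\mathbf m}=-\divv(\pi)$ together with $\divv\circ\divv=0$, which holds because $\star$ conjugates $\divv$ to the de Rham differential $d$), whereas your two applications of ``antisymmetry of $\pi_{ik}$ against symmetry of $\partial_i\partial_k$'' prove both ${\mathbf m}=\sum_k\bigl(\sum_i\partial_i\pi_{ik}\bigr)\partial_k$ and $\divv({\mathbf m})=0$ by hand, with no machinery beyond Lemma \ref{xxlem1.2}(1).

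The gap is the reduction step. The lemma is stated for an arbitrary Poisson algebra with a volume form, and your claim that such an algebra ``reduces to this coordinate picture once $\nu$ has been fixed'' is not justified: for a general smooth affine Poisson domain, $\Omega^1(A)$ need not be free on a set of algebra generators, and even after localizing one only gets $\nu=a\,dx_1\wedge\cdots\wedge dx_n$ with $a$ a unit that cannot in general be absorbed into new algebraic coordinates (compare the proof of Lemma \ref{xxlem3.1}). With such a factor both quantities acquire correction terms, $\divv_\nu(\delta)=\divv_{\nu_0}(\delta)+a^{-1}\delta(a)$ and ${\mathbf m}_\nu={\mathbf m}_{\nu_0}+a^{-1}H_a$, and the two-swap cancellation no longer closes by itself. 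Note also that Lemma \ref{xxlem1.2}, which you invoke for independence of the generating set, is itself proved only for polynomial rings, so it cannot carry the general case. Your closing alternative---write $\divv({\mathbf m})\nu=\mathcal L_{\mathbf m}\nu=d\,\iota_{\mathbf m}\nu$ and check that $\iota_{\mathbf m}\nu$ is closed---is exactly the route of \cite{Wa} and \cite[Proposition 4.17]{LPV} that the paper relies on, but the closedness is the entire content: it amounts to $\star{\mathbf m}=-d(\star\pi)$, i.e.\ ${\mathbf m}=-\divv(\pi)$, which you assert rather than prove. So your argument is complete for $A=\Bbbk[x_1,\ldots,x_n]$ with $\nu=dx_1\wedge\cdots\wedge dx_n$ (which covers Corollary \ref{xxcor0.3} and Lemma \ref{xxlem4.4}), but not for the lemma as stated, which is also used for general standard Poisson algebras in Theorem \ref{xxthm3.8}.
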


\begin{proof} Following the notation of \cite[Theorem 3.5]{Wa}, we denote 
${\mathbf m}$ by $\phi$ and $\nu$ by ${\text{vol}}$. By the proof of 
\cite[Corollary 3.10]{Wa}, ${\mathcal L}_{{\mathbf m}}(\nu)=0$. 
Then, by Definition \ref{xxdef1.1}, $\divv({\mathbf m})=0$.
\end{proof}

%Note that if $A$ is a Poisson polynomial ring
%$\Bbbk[x_1,\cdots,x_n]$ for some $n\geq 0$ then 
%$\divv({\mathbf m})=0$ even if ${\text{char}}\; \Bbbk>0$.

According to the ideas of Dolgushev \cite{Do}, the modular 
derivation of a Poisson algebra is corresponding to the Nakayama 
automorphism of a noetherian AS regular algebra. Hence the above 
lemma is a Poisson version of \cite[Corollary 5.5]{RRZ2} which 
says that the Nakayama automorphism of a noetherian AS regular 
algebra has the homological determinant 1.

When $A$ is a polynomial algebra $\Bbbk[x_1,\ldots,x_n]$ with 
any Poisson structure, the definitions of the divergence $\divv$ 
and the modular derivation $\mathbf m$ are independent of choices 
of the volume form. Here is one of main results of this paper.

\begin{theorem}
\label{xxthm0.2}
Let $\delta$ be a graded Poisson derivation of a 
${\mathbb Z}$-graded Poisson polynomial algebra 
$A:=\Bbbk[x_1,\ldots,x_n]$. Let ${\mathbf n}$ be the modular 
derivation of $A^{\delta}$. Then 
$${\mathbf n}={\mathbf m}+(\sum_{i=1}^n\deg x_i)\delta 
-\divv (\delta) E.$$
\end{theorem}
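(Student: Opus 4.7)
The plan is to compute $\mathbf{n}(a) := -\divv(H^{new}_a)$ directly from the definition of the twisted bracket and reduce it to $\mathbf{m}(a)$ plus correction terms involving $\delta$ and $E$. Throughout, I will use that $A^{\delta}$ and $A$ coincide as commutative polynomial algebras, so both modular derivations are computed with respect to the same divergence operator \eqref{E0.0.3}; only the Hamiltonian changes between the two Poisson structures.

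The first step is to read off from \eqref{E0.0.2} the explicit form of the twisted Hamiltonian: for any homogeneous $a\in A$,
$$H^{new}_a \;=\; H_a + E(a)\,\delta - \delta(a)\,E,$$
where $E(a)$ and $\delta(a)$ are viewed as elements of $A$ multiplying the derivations $\delta$ and $E$. I will then take divergence on both sides and apply the elementary Leibniz identity $\divv(fD) = D(f) + f\divv(D)$, immediate from \eqref{E0.0.3}, which produces four correction terms beyond $\divv(H_a)$: namely $\delta(E(a))$, $E(a)\divv(\delta)$, $-E(\delta(a))$, and $-\delta(a)\divv(E)$.

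Two bookkeeping observations will then finish the job. First, $\divv(E) = \sum_{i=1}^n \deg x_i$, immediate from $E(x_i) = (\deg x_i)\,x_i$ and the definition of divergence. Second, the twist construction forces $\delta$ to be homogeneous of degree $0$ (otherwise $\pi + E\wedge\delta$ would fail to be a graded bracket of degree zero), so $[E,\delta]=0$ and the terms $\delta(E(a))$ and $E(\delta(a))$ cancel. Substituting and negating then yields the claimed identity on homogeneous $a$, and by $\kk$-linearity the formula extends to all $a\in A$. The one point I expect to need explicit attention is this degree-zero hypothesis on $\delta$: without it an unwanted $[E,\delta](a) = -k\,\delta(a)$ term would remain, where $k$ is the degree of $\delta$. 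Beyond that, the argument is a short bookkeeping exercise with the divergence of a function-times-a-derivation.
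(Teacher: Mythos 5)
Your argument is correct, but it takes a genuinely different route from the paper. The paper deduces Theorem \ref{xxthm0.2} from Theorem \ref{xxthm3.6}: it writes the twisted bivector as $\pi'=\pi+E\wedge\delta$, applies $-\divv$ to it using the general identities ${\mathbf m}=-\divv(\pi)$ [Lemma \ref{xxlem3.4}] and $\divv(\delta\wedge\phi)=\divv(\phi)\delta-\divv(\delta)\phi-[\delta,\phi]$ [Lemma \ref{xxlem3.3}], kills the bracket term since $[\delta,E]=0$ for graded $\delta$, and finally identifies $\divv(E)=\deg(\nu)=\sum_i\deg x_i$ via Lemma \ref{xxlem1.2}(5). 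You instead compute ${\mathbf n}(a)=-\divv(H^{new}_a)$ pointwise, using $H^{new}_a=H_a+E(a)\delta-\delta(a)E$ and the elementary rule $\divv(fD)=D(f)+f\divv(D)$ for the polynomial divergence \eqref{E0.0.3}; the cross terms $\delta(E(a))$ and $E(\delta(a))$ cancel precisely because $\delta$ has degree $0$ --- which is part of the paper's standing convention for graded twists (Definition \ref{xxdef2.1} and the definition of $Gpd(A)$), so you do not need to re-derive it, though you correctly flag it as the one place gradedness is used. Your version is more elementary: it never invokes the star operator, the divergence of bivectors, or characteristic zero, and it is essentially the alternative proof alluded to in Remark \ref{xxrem3.7}, valid in positive characteristic. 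What the paper's route buys is generality: Theorem \ref{xxthm3.6} applies to any standard Poisson algebra with a volume form and to semi-Poisson $\delta$, whereas your computation is tied to the polynomial ring --- which is, however, all that Theorem \ref{xxthm0.2} asserts.
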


Note that Theorem \ref{xxthm0.2} holds even when 
${\text{char}}\; \Bbbk>0$, see Remark \ref{xxrem3.7}.

If we consider the analogy between the modular derivation of 
a Poisson algebra and the Nakayama automorphism of a graded 
skew Calabi-Yau algebra \cite{Do}, Theorem \ref{xxthm0.2} is a 
Poisson version of \cite[Theorem 0.3]{RRZ1}. Combining Theorem 
\ref{xxthm0.2} with Lemma \ref{xxlem0.1}, we obtain

\begin{corollary}
\label{xxcor0.3}
%Suppose that ${\text{char}}\; \Bbbk=0$. 
Let $A$ is a weighted 
graded Poisson algebra $\Bbbk[x_1,\ldots,x_n]$ with $\deg x_i>0$ 
for all $i$. Let $\delta=-{\frac{1}{\mathfrak l}} {\mathbf m}$ 
where ${\mathfrak l}=\sum_{i=1}^n \deg (x_i)$. Then $A^{\delta}$ 
is unimodular. As a consequence, \eqref{E0.0.1} holds.
\end{corollary}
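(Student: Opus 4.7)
The proof should be a direct calculation once Theorem \ref{xxthm0.2} and Lemma \ref{xxlem0.1} are in place, so the plan is mostly to assemble the pieces cleanly.

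First I would verify that $\delta := -\frac{1}{\mathfrak{l}} \mathbf{m}$ makes sense as an input to Theorem \ref{xxthm0.2}, i.e.\ that it is a graded Poisson derivation of $A$. The modular derivation is always a Poisson derivation (this is part of the standard theory of the modular class, and is implicit in the discussion surrounding Definition \ref{xxdef1.4}), and it is homogeneous of degree $0$ because the Poisson bracket on $A$ is graded of degree $0$, so the Hamiltonians $H_a$ are graded and thus so is their divergence. The scalar $\mathfrak{l} = \sum_i \deg x_i$ is a positive integer by the hypothesis $\deg x_i>0$, so dividing by $\mathfrak{l}$ is harmless (we are in characteristic zero here; see Remark \ref{xxrem3.7} for the positive characteristic caveat). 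Hence $\delta$ is a legitimate graded Poisson derivation and $A^{\delta}$ is defined.

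Next I would plug $\delta = -\frac{1}{\mathfrak{l}} \mathbf{m}$ into the formula of Theorem \ref{xxthm0.2}. Writing $\mathbf{n}$ for the modular derivation of $A^{\delta}$, the theorem gives
\[
\mathbf{n} \;=\; \mathbf{m} + \mathfrak{l}\,\delta - \divv(\delta)\, E
\;=\; \mathbf{m} - \mathbf{m} + \tfrac{1}{\mathfrak{l}} \divv(\mathbf{m})\, E.
\]
Lemma \ref{xxlem0.1} says exactly that $\divv(\mathbf{m}) = 0$, and therefore $\mathbf{n} = 0$. This is the definition of $A^{\delta}$ being unimodular.

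For the consequence, I would set $\pi_{unim} := \pi_{new}$, the Poisson bracket of $A^{\delta}$. By the definition of the twist in \eqref{E0.0.2},
\[
\pi_{unim} \;=\; \pi + E \wedge \delta \;=\; \pi - \tfrac{1}{\mathfrak{l}}\, E \wedge \mathbf{m},
\]
which rearranges to \eqref{E0.0.1}. There is no real obstacle here; the only subtle point is the existence and Poisson-derivation property of $\mathbf{m}$ as a graded object and the invertibility of $\mathfrak{l}$ in $\Bbbk$, both of which are automatic under the hypotheses of the corollary.
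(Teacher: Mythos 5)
Your proof is correct and is essentially the paper's own argument: the paper deduces the corollary from Theorem \ref{xxthm3.8}, whose proof is exactly your computation — substitute $\delta=-\frac{1}{\mathfrak l}{\mathbf m}$ into the modular-derivation formula of Theorem \ref{xxthm3.6}/\ref{xxthm0.2} and use $\divv({\mathbf m})=0$ from Lemma \ref{xxlem0.1} to get ${\mathbf n}=0$, then read off \eqref{E0.0.1} from $\pi'=\pi+E\wedge\delta$. Your preliminary check that ${\mathbf m}$ is a graded (degree-zero) Poisson derivation and that $\mathfrak l$ is invertible is the same implicit verification the paper relies on, so there is no substantive difference.
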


Let $A$ be a ${\mathbb Z}$-graded Poisson algebra. Suppose
$\delta$ is a derivation of $A$ and $a,b,c$ are homogeneous
elements of $A$. Let
\begin{align}\notag
p(\{-,-\}, \delta; a,b,c):=&
|a|a[\delta(\{b,c\})-\{\delta(b),c\}
-\{b, \delta(c)\}]\\
&\label{E0.3.1}\tag{E0.3.1}
-|b|b[\delta(\{a,c\})-\{\delta(a),c\}
-\{a, \delta(c)\}]\\
&+|c|c[\delta(\{a,b\})-\{\delta(a),b\}
-\{a, \delta(b)\}].
\notag
\end{align}

\begin{definition}
\label{xxdef0.4}
Let $A$ be a ${\mathbb Z}$-graded Poisson algebra. A derivation
$\delta$ of $A$ is called {\it semi-Poisson} if 
$p(\{-,-\}, \delta, a,b,c)=0$ for all homogeneous
elements $a,b,c$ in $A$.
\end{definition}

It is clear that
$${\text{Poisson derivation}}
\Rightarrow {\text{ semi-Poisson derivation}}
\Rightarrow {\text{ derivation}}$$
and opposite implications are not true [Example \ref{xxex2.6}]. 
Let $Gspd(A)$ (resp. $Gpd(A)$) be the set of graded 
semi-Poisson derivations (resp. graded Poisson derivations) of 
degree 0. We prove the following 

\begin{theorem}
\label{xxthm0.5}
%Suppose that ${\text{char}}\; \Bbbk=0$.
Let $A$ be a graded Poisson algebra $\Bbbk[x_1,
\ldots,x_n]$ with $\deg x_i>0$ for all $i$.
\begin{enumerate}
\item[(1)]
If $A$ is unimodular, then $Gspd(A)=Gpd(A)$. 
\item[(2)]
If $B$ is a twist of $A$, then 
$Gspd(A)=Gspd(B)$.
\item[(3)]
$Gspd(A)$ is a finite-dimensional Lie algebra.
\end{enumerate}
\end{theorem}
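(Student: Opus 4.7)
The plan is to translate the semi-Poisson condition into the language of multivector fields. For any graded derivation $\delta$ of $A$, the expression
\[
D_\delta(a,b) := \delta(\{a,b\}) - \{\delta(a),b\} - \{a,\delta(b)\}
\]
is an antisymmetric biderivation, i.e., a bivector $D_\delta \in \mathfrak{X}^2(A)$ (in fact, $D_\delta = L_\delta\pi$). With this notation $p(\{-,-\},\delta;a,b,c) = E(a)D_\delta(b,c) - E(b)D_\delta(a,c) + E(c)D_\delta(a,b)$ is the evaluation of the trivector $E \wedge D_\delta \in \mathfrak{X}^3(A)$, so $\delta \in Gspd(A)$ if and only if $E\wedge D_\delta = 0$. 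This reformulation will drive all three parts.

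For Part (2), let $B = A^\delta$ and take any degree-zero derivation $\delta'$ of the shared underlying commutative algebra. Because $\delta,\delta'$ have degree zero, $[E,\delta] = [E,\delta'] = 0$; expanding $\delta'(\{a,b\}_B) - \{\delta'(a),b\}_B - \{a,\delta'(b)\}_B$ with $\{-,-\}_B = \{-,-\}_A + E(\cdot)\delta(\cdot) - \delta(\cdot)E(\cdot)$ then yields
\[
D^B_{\delta'} = D^A_{\delta'} + E\wedge[\delta',\delta].
\]
Wedging with $E$ and invoking $E\wedge E = 0$ gives $E\wedge D^B_{\delta'} = E\wedge D^A_{\delta'}$, so $Gspd(A)=Gspd(B)$.

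For Part (1), assume $A$ is unimodular and $E\wedge D_\delta = 0$. Since $x_1,\dots,x_n$ form a regular sequence in $A$, Koszul theory makes the complex $(\mathfrak{X}^\bullet(A),\wedge E)$ exact in all weighted-graded degrees except the socle degree $-\mathfrak{l}$; as $D_\delta$ has weighted degree zero, there is a weighted-degree-zero vector field $\eta$ with $D_\delta = E\wedge\eta$. I now compute $\divv(D_\delta)$ in two ways. Using $\divv(X\wedge Y) = \divv(X)\,Y - \divv(Y)\,X + [X,Y]$ together with $\divv(E) = \mathfrak{l}$ and $[E,\eta]=0$,
\[
\divv(D_\delta) = \mathfrak{l}\,\eta - \divv(\eta)\,E.
\]
Alternatively, $D_\delta(da,\cdot) = [\delta, H_a] - H_{\delta(a)}$ as vector fields; the identity $\divv([X,Y]) = X(\divv Y) - Y(\divv X)$ combined with $\divv(H_a) = -\mathbf{m}(a)$ yields
\[
\divv(D_\delta)(a) = [\mathbf{m},\delta](a) + H_{\divv(\delta)}(a).
\]
In the unimodular case $\mathbf{m}=0$, and $\divv(\delta)\in A_0=\Bbbk$ is a scalar (since $\delta$ has degree zero), so $H_{\divv(\delta)} = 0$ and $\divv(D_\delta) = 0$. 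Equating the two formulas gives $\mathfrak{l}\,\eta = \divv(\eta)\,E$; since $\divv(\eta)\in A_0 = \Bbbk$ is a scalar, $\eta$ is a scalar multiple of $E$. Therefore $D_\delta = E\wedge\eta = 0$, i.e., $\delta \in Gpd(A)$.

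For Part (3), $Gspd(A)$ is contained in $\bigoplus_{i=1}^n A_{\deg(x_i)}\,\partial_{x_i}$, a finite-dimensional $\Bbbk$-space, so finite-dimensionality is immediate. For the Lie-algebra structure, $\mathfrak{l}>0$ in characteristic zero, so Corollary \ref{xxcor0.3} applies and twisting $A$ by $\delta_0 := -\tfrac{1}{\mathfrak{l}}\mathbf{m} \in Gpd(A)$ produces a unimodular Poisson algebra $B = A^{\delta_0}$; then part (2) gives $Gspd(A) = Gspd(B)$, part (1) gives $Gspd(B) = Gpd(B)$, and $Gpd(B)$ is closed under the commutator of derivations. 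The main obstacle is the Koszul-exactness step together with the two divergence computations in Part (1); once those formulas are in hand, the rest follows by bookkeeping.
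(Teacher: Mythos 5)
Your proof is correct, and for part (1) it takes a genuinely different route from the paper's. The paper proves (1) as Lemma \ref{xxlem4.1}: it forms the twist $B=A^{\delta}$ (legitimate because $\delta$ is semi-Poisson), invokes Theorem \ref{xxthm0.2} to get ${\mathbf n}={\mathfrak l}\delta-\divv(\delta)E$ for the modular derivation of $B$, concludes that $\delta$ is a Poisson derivation of $B$, and then transfers Poisson-ness back to $A$ by expanding $\{-,-\}=\langle-,-\rangle-E\wedge\delta$. You instead stay inside $A$: from $E\wedge D_\delta=0$ you solve $D_\delta=E\wedge\eta$ using exactness of $({\mathfrak X}^{\bullet}(A),E\wedge-)$, the co-Koszul complex of the regular sequence $(\deg(x_1)x_1,\ldots,\deg(x_n)x_n)$, whose only cohomology sits at the top spot in internal degree $-{\mathfrak l}$, so the internal-degree-zero bivector $D_\delta$ is always in the image (this also covers $n=2$); then two divergence computations force $\eta\in\Bbbk E$ and hence $D_\delta=0$. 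The identities you use are available: $\divv$ of a wedge is Lemma \ref{xxlem3.3} (your sign convention is opposite to the paper's, which is immaterial here since the relevant right-hand side vanishes), while $\divv[X,Y]=X(\divv Y)-Y(\divv X)$ and $(\divv P)(a)=\divv(P(a,-))$ follow from ${\mathcal L}_{[X,Y]}=[{\mathcal L}_X,{\mathcal L}_Y]$ and checking on decomposable bivectors. What your route buys is an intrinsic argument that never constructs $A^{\delta}$ and pinpoints the only ambiguity ($\eta$ defined up to $\Bbbk E$); what the paper's route buys is brevity, since Theorem \ref{xxthm0.2} already packages the same divergence bookkeeping. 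Parts (2) and (3) agree with the paper in substance: your identity $D^{B}_{\delta'}=D^{A}_{\delta'}+E\wedge[\delta',\delta]$ is the elementary form of the Schouten computation $[E\wedge\phi,E\wedge\delta]_S=0$ in Lemma \ref{xxlem4.2}, and (3) is obtained exactly as in the paper (twist to the unimodular case via Corollary \ref{xxcor0.3}, apply (1) and (2), and note that the bracket is the commutator of derivations of the common underlying ring while the space embeds in $\bigoplus_{i}A_{\deg x_i}\partial_{x_i}$).
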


Now we introduce the {\it rigidity of graded twisting} of $A$, 
denoted by $rgt(A)$ (see Definition \ref{xxdef4.3}), to measure 
the complexity/rigidity of a Poisson structure on $A$. We relate
the rigidity with other properties. We say a Poisson derivation 
$\phi$ of $A$ is {\it ozone} if $\phi(z)=0$ for all $z$ in 
the Poisson center of $A$. It is obvious that every Hamiltonian 
derivation is ozone, but the converse is not true 
in general. Recall that $E$ denotes the Euler derivation.

Let $M$ be a ${\mathbb Z}$-graded $\Bbbk$-vector space. 
The Hilbert series of $M$ is defined to be
\begin{equation}
\label{E0.5.1}\tag{E0.5.1}
h_M(t)=\sum_{i\in {\mathbb Z}} (\dim M_i) t^i.
\end{equation}
Let $PH^i(A)$ denote the $i$th Poisson cohomology of 
$A$ \eqref{E1.5.3}. We have the following result.

\begin{theorem}
\label{xxthm0.6}
Let $\Bbbk$ be algebraically closed. 
Let $A$ be the Poisson algebra $\Bbbk[x_1,x_2,x_3]$ with 
$\deg(x_i)=1$ for $i=1,2,3$. Let $Z$ be the Poisson center 
of $A$. Then the following are equivalent.
\begin{enumerate}
\item[(1)]
$rgt(A)=0$.
\item[(2)]
Any graded twist of $A$ is isomorphic to $A$.
\item[(3)]
The Hilbert series of the graded vector space of Poisson
derivations of $A$ is $\frac{1}{(1-t)^3}$.
\item[(4)]
$h_{PH^1(A)}(t)$ is $\frac{1}{1-t^3}$.
\item[(5)]
$h_{PH^1(A)}(t)$ is equal to $h_Z(t)$.
\item[(6)]
Every Poisson derivation $\phi$ has a decomposition
$$\phi=zE+H_a$$
where $z\in Z$ and $a\in A$. Here $z$ is unique and 
$a$ is unique up to a central element. 
\item[(7)]
Every ozone derivation is Hamiltonian. 
\item[(8)]
$A$ is unimodular and the potential is irreducible.
\item[(9)]
$h_{PH^3(A)}(t)-h_{PH^2(A)}(t)=t^{-3}$.
\end{enumerate}
\end{theorem}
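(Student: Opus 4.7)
My strategy is to leverage Corollary \ref{xxcor0.3} together with the classical Jacobian description of unimodular graded Poisson brackets on $A = \Bbbk[x_1,x_2,x_3]$: every such bracket has the form $\{f,g\} = \det(\nabla f, \nabla g, \nabla \Omega)$ for a homogeneous cubic potential $\Omega$, with $\Omega$ itself a Casimir. Under the irreducibility assumption in (8), algebraic closure forces the Poisson center to be $Z = \Bbbk[\Omega]$, giving $h_Z(t) = \frac{1}{1-t^3}$. I would organize the nine conditions around this structural picture and prove them through a web of short implications rather than a single long cycle.

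The block $(1) \Leftrightarrow (2) \Leftrightarrow (3)$ is handled first and is largely formal: by the definition of $\text{rgt}(A)$, its vanishing is equivalent to every graded twist $A^\delta$ being Poisson-isomorphic to $A$, and combining this with Theorem \ref{xxthm0.5} identifies $Gpd(A)$ with $Gspd(A)$ and pins down its Hilbert series as $\frac{1}{(1-t)^3}$. The block $(3) \Leftrightarrow (4) \Leftrightarrow (5)$ then follows from the graded short exact sequence
\[
0 \to \text{Ham}(A) \to Gpd(A) \to PH^1(A) \to 0
\]
together with the identification $\text{Ham}(A) \cong A/Z$ as graded vector spaces. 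Substituting $h_A(t) = \frac{1}{(1-t)^3}$ lets me translate freely between the three Hilbert-series conditions once $h_Z(t)$ is controlled.

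For the structural block $(6) \Leftrightarrow (7) \Leftrightarrow (8)$: the Euler derivation $E$ is always a graded Poisson derivation, by homogeneity of the bracket. Assuming (8), any Poisson derivation $\phi$ preserves $Z = \Bbbk[\Omega]$, and a degree count gives $\phi(\Omega) = c\Omega$ for some scalar $c$; replacing $\phi$ by $\phi - \tfrac{c}{3}E$ kills $\Omega$ and hence the whole center, and the Jacobian description then forces the remainder to be Hamiltonian, giving (6). Uniqueness of $z = c/3$ is automatic, and uniqueness of $a$ modulo $Z$ follows from $H_a = 0 \Leftrightarrow a \in Z$. The equivalence $(6) \Leftrightarrow (7)$ is immediate since $zE$ is ozone iff $z = 0$ (as $E(\Omega) = 3\Omega \neq 0$). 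To return from (6) to (8), observe that a nontrivial modular derivation would enlarge $Gpd(A)$ beyond the space described in (6) via $E \wedge \mathbf{m}$ contributions, and reducibility of $\Omega$ would enlarge $Z$, both contradicting the rigidity encoded in the other conditions.

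Finally, $(4) \Leftrightarrow (9)$ is a Poisson-cohomology Euler characteristic computation. Using Poisson Poincar\'e duality (untwisted in the unimodular case) to pair $PH^i$ with $PH^{3-i}$, together with $h_{PH^0}(t) = h_Z(t) = \frac{1}{1-t^3}$ and the known Euler characteristic of the Poisson complex, the alternating sum $\sum_i (-1)^i h_{PH^i(A)}(t)$ collapses to the claimed identity. The main obstacle is this last step when $\Omega$ is irreducible but has non-isolated singularities: the Koszul complex on the partial derivatives $\partial_i \Omega$ ceases to be exact, so the standard Milnor-algebra identification of Poisson cohomology fails. One must replace it by a finer resolution, most naturally via the module of $\Omega$-logarithmic derivations and its syzygies, and this is where the bulk of the technical work will lie.
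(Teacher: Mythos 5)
Your skeleton (Hilbert-series bookkeeping via $0\to Hd(A)\to Pd(A)\to PH^1(A)\to 0$, an Euler--ozone/Hamiltonian decomposition under (8), and the Euler characteristic for (9)) matches the paper's, but two of your claimed ``easy'' steps are exactly where the real content lies, and you do not supply it. First, the block $(1)\Leftrightarrow(2)\Leftrightarrow(3)$ is not formal. $rgt(A)=0$ only constrains the \emph{degree-zero} graded (semi-)Poisson derivations, whereas (3) is a statement about Poisson derivations in \emph{all} degrees (equivalently $PH^1$-minimality); passing from (1) to (3) is essentially the hardest direction of the theorem. The paper gets it by first proving $(1)\Leftrightarrow(8)$ through the complete case-by-case classification of unimodular quadratic Poisson structures by their cubic potentials (Examples \ref{xxex6.1}--\ref{xxex6.6}, Corollary \ref{xxcor6.7}), which shows every reducible potential and every non-unimodular structure has $rgt\leq -1$, and only then deducing (3)--(5). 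Your proposal has no substitute for this classification, and your route back from (6) to irreducibility (``reducibility of $\Omega$ would enlarge $Z$'') is unjustified: for $\Omega=xyz$ or $xy(x+y)$ the failure of rigidity is detected by extra degree-zero log-Hamiltonian Poisson derivations, not by a visibly larger center. (Also, $(2)\Rightarrow(1)$ is not ``by definition'': the paper needs Corollary \ref{xxcor0.3} and Theorem \ref{xxthm0.2} to produce a twist with nonzero modular derivation when $rgt(A)\neq 0$.)

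Second, the step you dismiss with ``the Jacobian description then forces the remainder to be Hamiltonian'' is precisely the assertion that every ozone derivation is Hamiltonian, and for the irreducible potentials with \emph{non-isolated} singularities ($\Omega=x^3+y^2z$ and $x^3+x^2z+y^2z$) this is the bulk of the paper's work (the long Lemmas \ref{xxlem6.9} and \ref{xxlem6.10}, together with $Z=\Bbbk[\Omega]$ as in Lemmas \ref{xxlem7.8} and \ref{xxlem7.9}); only the Hesse-form case follows from the known isolated-singularity cohomology computations. You do notice that the Koszul/Milnor-algebra argument breaks down without isolated singularities, but you attach that difficulty to $(4)\Leftrightarrow(9)$ --- which in fact costs nothing, since it follows from the Euler characteristic \eqref{E1.5.5} and $PH^0(A)=Z$ without computing $PH^2$ or $PH^3$ separately --- and you leave your proposed replacement (logarithmic derivations and their syzygies) entirely unexecuted. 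So the genuinely hard implications, $(1)\Leftrightarrow(8)$ and $(8)\Rightarrow(6),(7)$ in the non-isolated-singularity cases, are both misplaced and missing from the proposal.
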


Some partial generalizations of the above theorem to the
higher dimensional case are given in Section \ref{xxsec7}.
As an application, we have the following result.

\begin{corollary}
\label{xxcor0.7}
Let $\Bbbk$ be algebraically closed. Let $A$ be the unimodular 
quadratic Poisson structure on $\Bbbk[x,y,z]$ with irreducible 
potential $\Omega$. Then 
\begin{enumerate}
\item[(1)]
$\displaystyle h_{PH^0(A)}(t)=\frac{1}{1-t^3}$.
\item[(2)]
$\displaystyle h_{PH^1(A)}(t)=\frac{1}{1-t^3}$.
\item[(3)]
$\displaystyle h_{PH^2(A)}(t)=\frac{1}{t^3}(\frac{(1+t)^3}{1-t^3}-1)$.
\item[(4)]
$\displaystyle h_{PH^3(A)}(t)=\frac{(1+t)^3}{t^3(1-t^3)}$.
\end{enumerate}
\end{corollary}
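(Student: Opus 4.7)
The hypotheses of the corollary match condition (8) of Theorem~\ref{xxthm0.6}, so all the equivalent conditions (1)--(9) of that theorem hold for $A$. My plan is to read off (1), (2), and a single relation between $h_{PH^2(A)}(t)$ and $h_{PH^3(A)}(t)$ directly from Theorem~\ref{xxthm0.6}, and then to compute one of the series $h_{PH^2(A)}$ or $h_{PH^3(A)}$ independently so that the relation pins down the other.

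Parts (1) and (2) are essentially immediate. Condition (4) of Theorem~\ref{xxthm0.6} gives $h_{PH^1(A)}(t) = 1/(1-t^3)$, which is (2). Since $PH^0(A) = Z$ by definition, condition (5) together with (4) yields $h_Z(t) = h_{PH^1(A)}(t) = 1/(1-t^3)$, proving (1); as a cross-check, the bracket is Jacobian with respect to $\Omega$ and irreducibility of $\Omega$ gives $Z = \Bbbk[\Omega]$ directly, with the same Hilbert series.

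For (3) and (4), condition (9) supplies the single relation $h_{PH^3(A)}(t) - h_{PH^2(A)}(t) = t^{-3}$. Substituting (1) and (2) into the Euler-characteristic identity $\sum_i (-1)^i h_{PH^i(A)}(t) = \sum_i (-1)^i h_{\wedge^i T_A}(t) = -t^{-3}$ for the Lichnerowicz complex reproduces the same equation, so no new input comes from there. To break the tie, I plan to compute $h_{PH^3(A)}(t)$ directly using Poincar\'e-type duality for unimodular smooth Poisson algebras in dimension $n=3$: contraction with the volume form $dx \wedge dy \wedge dz$ of degree $n=3$ induces an isomorphism $\wedge^{n-i} T_A \cong \Omega^i$ that shifts degree by $-n$ and, under unimodularity, intertwines the Lichnerowicz differential with the Brylinski differential, yielding $h_{PH^3(A)}(t) = t^{-3}\, h_{HP_0(A)}(t) = t^{-3}\, h_{A/\{A,A\}}(t)$. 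Part (4) thus reduces to the claim $h_{A/\{A,A\}}(t) = (1+t)^3/(1-t^3)$, and (3) then follows by subtracting $t^{-3}$ from (4) per condition (9), giving the stated closed form.

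The main obstacle is the computation of $h_{A/\{A,A\}}(t)$, i.e., of the image of the Brylinski boundary $\Omega^1 \to A$, $f\,dg \mapsto \{f,g\}$, without the convenient isolated-singularity hypothesis. When $\Omega$ is smooth the partials $\Omega_x,\Omega_y,\Omega_z$ form a regular sequence of degree $2$, $A/J_\Omega$ has Hilbert series $(1-t^2)^3/(1-t)^3 = (1+t)^3$, and a $\Bbbk[\Omega]$-module argument promotes this to $h_{A/\{A,A\}}(t) = (1+t)^3/(1-t^3)$ since $Z = \Bbbk[\Omega]$ acts on $A/\{A,A\}$. In the general irreducible case the Koszul complex of $\nabla \Omega$ is no longer acyclic; the plan is to use the Jacobian formula $\{f,g\} = \det(\nabla f,\nabla g,\nabla \Omega)$ together with the irreducibility of $\Omega$ to still identify $A/\{A,A\}$ as a free $\Bbbk[\Omega]$-module whose fibre modulo $\Omega$ has Hilbert series $(1+t)^3$, thereby absorbing any non-isolated singular behaviour into the $\Bbbk[\Omega]$-structure.
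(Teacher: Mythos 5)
Your overall route is the same as the paper's: parts (1) and (2) from Theorem \ref{xxthm0.6} (via $Z=\Bbbk[\Omega]$ and (8)$\Rightarrow$(4)), part (4) by the twisted Poincar\'e duality $h_{PH^3(A)}(t)=t^{-3}h_{PH_0(A)}(t)$ for unimodular Poisson polynomial algebras (the paper cites \cite[Theorem 3.5]{LuWW1}), and part (3) from the Euler-characteristic identity \eqref{E1.5.5} together with the other three. You also correctly recognize that conditions (4), (5), (9) of Theorem \ref{xxthm0.6} alone cannot separate $PH^2$ from $PH^3$, and that the extra input must be the zeroth Poisson homology.

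The genuine gap is exactly at that extra input: you do not prove $h_{A/\{A,A\}}(t)=\frac{(1+t)^3}{1-t^3}$ in the cases where $\Omega$ is irreducible without isolated singularities ($\Omega=x^3+y^2z$ and $\Omega=x^3+x^2z+y^2z$), which is the whole point of the corollary and is precisely the content of the paper's Lemmas \ref{xxlem7.8} and \ref{xxlem7.9}, proved there by long explicit basis computations in $A/(H_x(A)+H_y(A)+H_z(A))$. Your proposed shortcut --- that $A/\{A,A\}$ is a free $\Bbbk[\Omega]$-module whose fibre modulo $\Omega$ has Hilbert series $(1+t)^3$, with the non-isolated singular behaviour ``absorbed'' into the $\Bbbk[\Omega]$-structure --- is a restatement of the desired answer, not an argument, and the natural candidate for the fibre is in fact wrong: in these two cases $A_{sing}=A/(\Omega_x,\Omega_y,\Omega_z)$ has Hilbert series $\frac{2}{1-t}+t^2+t-1$ (infinite-dimensional; see Example \ref{xxex6.6}, Cases 1 and 2), so the Koszul/regular-sequence picture that works for the Hesse family genuinely fails, and the agreement of $h_{PH_0(A)}(t)$ with $(1+t)^3/(1-t^3)$ is a nontrivial coincidence that must be established by hand (the explicit basis in Lemma \ref{xxlem7.8}(5), for instance, contains the elements $z^k, xz^k$ for all $k\geq 1$ with no $\Omega$-powers attached, so the naive ``free module with an 8-dimensional fibre'' structure is not what actually occurs). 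Until you supply a proof of the Hilbert series of $PH_0(A)$ in these two degenerate cases, parts (3) and (4) are unproved.
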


When the potential $\Omega$ has isolated singularities, the 
Poisson cohomologies have been computed by several authors, 
see \cite{Pe1,Pe2,Pi1, Pi2, VdB} and the references therein. 
The above corollary is probably the first computation of the 
Poisson cohomologies when $\Omega$ is irreducible, but does 
not have isolated singularities.

The paper is organized as follows. Section 1 recalls some 
basic definitions such as divergence and modular derivation.
In Section 2 we introduce the Poisson version of a graded 
twist. The proofs of Theorem \ref{xxthm0.2} and Corollary 
\ref{xxcor0.3} are given in Section 3. The rigidity of 
graded twisting is introduced in Section 4 and Theorem 
\ref{xxthm0.5} is proven there. In Sections 5 and 6 we 
compute the rigidity of some Poisson structures on 
polynomial rings. Theorem \ref{xxthm0.6} and Corollary 
\ref{xxcor0.7} are proved in Section 7.

\section{Preliminaries}
\label{xxsec1}

In this section we recall several definitions such as 
{\it divergence}, {\it modular derivation}, and 
{\it Poisson cohomology}. Other 
basic definitions about Poisson algebras can be found in the 
book \cite{LPV}. Everything in this section is well known.

In Sections \ref{xxsec1} and \ref{xxsec2}, let $\Bbbk$ be 
a base field of any characteristic. 
Let $\Omega^1(A)$ be the module of K{\" a}hler differentials 
over $A$ \cite[Sect. 3.2.1]{LPV}. For each $k\geq 0$, let
$\Omega^k(A)$ be $\wedge^p_A \Omega^1(A)$ 
\cite[Sect. 3.2.2]{LPV}. Let $d=\Kdim A$ where $\Kdim$ denotes
the Krull dimension. If $A$ is smooth and $\Omega^d(A)$ is 
a free $A$-module with a generator $\nu$, then $\nu$ is 
called a {\it volume form} of $A$. The differential 
$d:A\to \Omega^1(A)$ extends to a well-defined differential of 
the complex $\Omega^{\bullet}(A)$ and the complex 
$(\Omega^{\bullet},d)$ is called the algebraic {\it de Rham 
complex} of $A$.

For each $k\geq 0$, let ${\mathfrak X}^k(A)$ be the set of 
skew-symmetric $k$-derivations of $A$. It is also true that
\begin{equation}
\label{E1.0.1}\tag{E1.0.1}
{\mathfrak X}^p(A)\cong \Hom_{A}(\Omega^p(A),A)
\end{equation}
for all $p\geq 0$ \cite[(3.15)]{LPV}.

For every element $P\in {\mathfrak X}^p(A)$, the {\it internal 
product} with respect to $P$, denoted by $\iota_P$, is an 
$A$-module map 
$$\iota_P: \Omega^{\bullet}(A)\to \Omega^{\bullet-p}(A)$$
which is determined by
\begin{equation}
\label{E1.0.2}\tag{E1.0.2}
\iota_P(dF_1\wedge dF_2 \wedge \cdots \wedge dF_k)=
\begin{cases} 0& k<p,\\
\sum_{\sigma\in {\mathbb S}_{p,k-p}} sgn(\sigma)
P[F_{\sigma(1)},\ldots, F_{\sigma(p)}] \\
\qquad\qquad dF_{\sigma(p+1)} \wedge \cdots \wedge dF_{\sigma(k)}
\in \Omega^{k-p}(A) &k\geq p
\end{cases}
\end{equation}
for all $dF_1\wedge dF_1 \wedge \cdots \wedge dF_k \in 
\Omega^k(A)$. Here ${\mathbb S}_{p,q}\subset {\mathbb S}_k$ 
is the set of $(p,q)$-shuffles with $p+q=k$. 

For every $P\in {\mathfrak X}^p(A)$, the {\it Lie derivative} 
with respective to $P$ is defined to be
\begin{equation}
\label{E1.0.3}\tag{E1.0.3}
{\mathcal L}_{P}=[\iota_P,d]: \Omega^{\bullet}(A)
\to \Omega^{\bullet-p+1}(A),
\end{equation}
see \cite[(3.49)]{LPV}. Below is the definition 
of the divergence of a derivation. In several definitions in 
this paper we assume that $A$ is a smooth Poisson algebra with 
a fixed volume form $\nu$. 

\begin{definition} \cite[(4.20)]{LPV}
\label{xxdef1.1} Let $\delta$ be a derivation of $A$, namely, 
$\delta \in {\mathfrak X}^1(A)$. The {\it divergence} of 
$\delta$, denoted by $\divv (\delta)$, is an element in $A$ 
defined by the equation
\begin{equation}
\label{E1.1.1}\tag{E1.1.1}
{\mathcal L}_{\delta} (\nu)= \divv (\delta) \nu.
\end{equation}
\end{definition}

It is clear that the divergence of $\delta$ is dependent on the 
volume form $\nu$, but independent of the Poisson structure of 
$A$. The definition of the divergence of a skew-symmetric 
$k$-derivation, for $k\geq 2$, can be found in 
\cite[Sect. 4.3.3]{LPV}. 

Part (1) of the following lemma justifies the definition of 
the divergence given in \eqref{E0.0.3}. Let $G$ be an abelian 
group (or semigroup). A $G$-graded algebra with a Poisson 
structure is called a $G$-graded Poisson structure if 
$\deg(\{a,b\})=\deg a+\deg b$ for all homogeneous elements 
$a,b\in A$.

\begin{lemma}
\label{xxlem1.2}
Let $A$ be a Poisson polynomial algebra $\Bbbk[x_1,\ldots,x_n]$.
\begin{enumerate}
\item[(1)]
\begin{equation}
\label{E1.2.1}\tag{E1.2.1}
\divv (\delta)=\sum_{i=1}^n 
\frac{\partial \delta(x_i)}{\partial x_i}.
\end{equation}
\item[(2)]
If $A$ is a ${\mathbb Z}$-graded Poisson polynomial algebra with 
$x_i$ homogeneous for all $i$ and if $\delta$ is graded (of degree 0), then 
$\divv (\delta)\in A_0$.
\item[(3)]
If $A$ is a connected ${\mathbb N}$-graded Poisson polynomial 
algebra with $\deg x_i>0$ for all $i$ and if $\delta$ is graded
(of degree 0), 
then $\divv (\delta)\in \Bbbk$.
\item[(4)]
Suppose, in addition to {\rm{(3)}}, $\deg x_i=1$ for all $i$.
Let $\delta$ be a derivation of $A$ (of degree 0). Write 
$$\delta(x_i)=\sum_{j=1}^n c_{ij} x_j$$
where $c_{ij}\in \Bbbk$ for all $1\leq i,j\leq n$.
Then 
\begin{equation}
\label{E1.2.2}\tag{E1.2.2}\divv (\delta)=\sum_{i=1}^n c_{ii}.
\end{equation} 
\item[(5)]
Let $A$ be a ${\mathbb Z}$-graded Poisson algebra with $\deg(x_i)
\in {\mathbb Z}$ and let $E$ be the Euler derivation of $A$ defined 
in the introduction. Then $\divv(E)=\deg (\nu)$.
\end{enumerate}
\end{lemma}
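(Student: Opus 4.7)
The plan is to prove part (1) from the definition of divergence using the standard volume form, and then to deduce parts (2)--(5) as essentially routine consequences.

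For part (1), take the distinguished volume form $\nu = dx_1 \wedge \cdots \wedge dx_n$ on $A = \Bbbk[x_1,\ldots,x_n]$. By Cartan's formula \eqref{E1.0.3},
$$\mathcal{L}_\delta(\nu) = \iota_\delta(d\nu) + d(\iota_\delta \nu).$$
Since $\nu$ is a top form, $d\nu = 0$, so it remains to compute $d(\iota_\delta \nu)$. Applying \eqref{E1.0.2} to $\nu$ gives
$$\iota_\delta \nu = \sum_{i=1}^n (-1)^{i-1}\, \delta(x_i)\, dx_1 \wedge \cdots \wedge \widehat{dx_i} \wedge \cdots \wedge dx_n.$$
Differentiating and collecting terms, only the $i$th partial of $\delta(x_i)$ survives in the $i$th summand (the others insert a repeated $dx_j$ and vanish), yielding
$$d(\iota_\delta \nu) = \Bigl(\sum_{i=1}^n \frac{\partial \delta(x_i)}{\partial x_i}\Bigr)\, \nu.$$
Comparing with \eqref{E1.1.1} in Definition \ref{xxdef1.1} proves \eqref{E1.2.1}. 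Because the units of $\Bbbk[x_1,\ldots,x_n]$ are $\Bbbk^\times$, any volume form is a nonzero scalar multiple of $\nu$, so $\divv(\delta)$ is independent of this choice.

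Parts (2), (3), (4) follow by a direct degree count in \eqref{E1.2.1}. For (2), if $\delta$ is graded of degree $0$, then $\delta(x_i)$ is homogeneous of degree $\deg x_i$, so $\partial \delta(x_i)/\partial x_i$ is homogeneous of degree $0$, hence $\divv(\delta) \in A_0$. Part (3) is the observation that $A_0 = \Bbbk$ when $\deg x_i > 0$ for all $i$. In (4), with $\deg x_i = 1$ and $\delta(x_i) = \sum_j c_{ij} x_j$, one has $\partial \delta(x_i)/\partial x_i = c_{ii}$, and summing gives \eqref{E1.2.2}. For (5), since $E(x_i) = (\deg x_i)\, x_i$, part (1) yields $\divv(E) = \sum_{i=1}^n \deg x_i$, which equals $\deg(\nu)$ because $\nu = dx_1 \wedge \cdots \wedge dx_n$ has degree $\sum_i \deg x_i$ in the induced grading on $\Omega^n(A)$ and every other volume form differs from $\nu$ by a nonzero scalar, hence shares its degree.

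The only non-mechanical step is the Cartan-formula computation in part (1); the main thing to be careful about is the sign bookkeeping and the cancellation of cross terms when $d$ is applied to $\iota_\delta \nu$, after which the remaining parts reduce to elementary observations about homogeneous polynomials.
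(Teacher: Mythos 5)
Your proposal is correct and follows essentially the same route as the paper: part (1) is the Cartan-formula computation of $\mathcal{L}_{\delta}(\nu)$ for the standard volume form $\nu=dx_1\wedge\cdots\wedge dx_n$ (the paper carries out exactly this calculation), and parts (2)--(5) are the same degree-counting consequences. The extra remark on independence of the volume form matches the paper's observation preceding the lemma, so no issues remain.
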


\begin{proof} (1) Since $\Bbbk[x_1,\ldots,x_n]$, $\nu:=dx_1 
\wedge \cdots \wedge dx_n$ is a volume form. By the definition 
of the Lie derivative ${\mathcal L}_{\delta}$,
$$\begin{aligned}
{\mathcal L}_{\delta} \nu
&=d (\sum_{i=1} (-1)^{i-1} \delta(x_i) 
dx_1 \wedge \cdots \wedge\widehat{dx_i} \wedge\cdots \wedge dx_n)\\
&=\sum_{i=1}^n (-1)^{i-1} 
(\sum_{j=1}^n \frac{ \partial \delta(x_i)}{\partial x_j}
dx_j) \wedge dx_1 \wedge \cdots \wedge\widehat{dx_i} 
\wedge\cdots \wedge dx_n\\
&=(\sum_{i=1}^n \frac{ \partial \delta(x_i)}{\partial x_i})
dx_1 \wedge \cdots \wedge dx_n\\
&=(\sum_{i=1}^n \frac{ \partial \delta(x_i)}{\partial x_i})
\nu.
\end{aligned}
$$
Then the assertion follows.

(2) Since $\deg \delta=0$, $\deg \delta (x_i)=\deg x_i$. As a 
consequence, $\deg (\frac{ \partial \delta(x_i)}{\partial x_i})=0$.
By part (1), $\deg (\divv \delta)=0$. The assertion follows.

(3) This follows from part (3) and the fact that $A_0=\Bbbk$.

(4) This follows from part (1) and the fact that 
$\frac{ \partial \delta(x_i)}{\partial x_i}=c_{ii}$ for all 
$i$.

(5) In this case, $\nu =dx_1\wedge \cdots \wedge dx_n$ and $E(x_i)
=(\deg x_i) x_i$. The assertion follows from \eqref{E1.2.1}.
\end{proof}

We recall the following definition.

\begin{definition} 
\label{xxdef1.4}
Let $A$ be a Poisson algebra with volume form $\nu$. 
\begin{enumerate}
\item[(1)]
\cite[Definition 4.10]{LPV}
The {\it modular derivation} (or {\it modular vector field}) of 
$A$ associated to $\nu$ is defined to be
$${\mathbf m}(a):=-\divv H_a$$
for all $a\in A$, or equivalently,
$${\mathcal L}_{H_a}(\nu)=-{\mathbf m}(a) \nu.$$
\item[(2)]
\cite[Definition 4.12]{LPV}
If ${\mathbf m}=0$ for some volume form $\nu$, then 
$A$ is called {\it unimodular}.
\end{enumerate}
\end{definition}

If $A=\Bbbk[x_1,\ldots,x_n]$, then ${\mathbf m}$ is 
independent of the choice of the volume forms $\nu$.

Let us give an easy example.

\begin{example}
\label{xxex1.5} 
Let $A$ be the Poisson polynomial algebra $\Bbbk[x_1,x_2]$ 
with $\{x_1,x_2\}=x_1^n$ for some integer $n\geq 0$. It is 
easy to check that ${\mathbf m}(x_1)=0$ and that 
${\mathbf m}(x_2)=nx_1^{n-1}$. If ${\rm{char}}\; \Bbbk=p>0$ 
and $p\mid n$, then $A$ is unimodular. This is an interesting 
fact. Now suppose $n=2$. Since $\{x_1x_2,x_2\}=x_1^2x_2$, 
${\mathbf m}(\{x_1x_2,x_2\})={\mathbf m}(x_1^2x_2)=2x_1^3$. 
As a consequence,  $\divv ([\delta_1, \delta_2])$ is in 
general nonzero for any two derivations $\delta_1,\delta_2$.
\end{example}

Next we review the Poisson cohomology. Let $(A,\pi)$ be a 
Poisson algebra. For each $k$, ${\mathfrak X}^k(A)$ is the 
space of skew-symmetric $k$-derivations of $A$. The Poisson 
coboundary map $d_{\pi}: {\mathfrak X}^{\bullet}(A)\to 
{\mathfrak X}^{\bullet+1}(A)$ is defined as follows. For 
any $Q\in {\mathfrak X}^q(A)$, where $q\in {\mathbb N}$, we define
\begin{align}
d_{\pi}^q(Q)[F_0,\ldots,F_q]
:=&\sum_{i=0}^q (-1)^i \{ F_i, Q[F_0,\ldots,\widehat{F_i},\ldots,F_q]\}
\label{E1.5.1}\tag{E1.5.1}\\
&
+\sum_{0\leq i\leq j\leq q}
(-1)^{i+j}
Q[\{F_i,F_j], F_0,\ldots,\widehat{F_i},\ldots,\widehat{F_j},\ldots,F_q]
,\notag
\end{align}
for all $F_0,\cdots,F_q\in A$. In particular,
$$\begin{aligned}
d_{\pi}^0(Q)[F_0]&=\{F_0,Q\},\\
d_{\pi}^1(Q)[F_0,F_1]&=
\{F_0,Q[F_1]\}-\{F_1, Q[F_0]\}-Q[\{F_0,F_1\}],\\
d_{\pi}^2(Q)[F_0,F_1,F_2]&
=\{F_0, Q[F_1,F_2]\}-\{F_1,Q[F_0,F_2]\}+\{F_2, Q[F_0,F_1]\}\\
&\quad 
-Q[\{F_0,F_1\},F_2]+Q[\{F_0,F_2\},F_1]-Q[\{F_1,F_2\},F_0]
\end{aligned}
$$

For $P\in {\mathfrak X}^{p}(A)$ and $Q\in {\mathfrak X}^q(A)$, the wedge
product $P\wedge Q\in {\mathfrak X}^{p+q}(A)$ is the skew-symmetric 
$(p+q)$-derivation of $A$, defined by
$$\begin{aligned}
(P\wedge Q)&[F_1,\cdots,F_{p+q}]:= \\
&\sum_{\sigma\in {\mathbb S}_{p,q}} sgn(\sigma) 
P[F_{\sigma(1)},\ldots, F_{\sigma(p)}]\,
Q[F_{\sigma(p+1)},\ldots,F_{\sigma(p+q)}],
\end{aligned}$$
for all $F_1,\cdots,F_{p+q}\in A$. In particular, if 
$P\in {\mathfrak X}^{1}(A)$ and 
$Q\in {\mathfrak X}^2(A)$, then we have
\begin{equation}
\label{E1.5.2}\tag{E1.5.2}
(P\wedge Q)[F_1,F_2,F_3]=
P[F_1]Q[F_2,F_3]-P[F_2]Q[F_1,F_3]+P[F_3]Q[F_1,F_2].
\end{equation}

Therefore $({\mathfrak X}^\ast(A), \wedge, d)$ is a dga 
(differential graded algebra). For each $q\geq 0$, the 
{\it $q$-th Poisson cohomology} of $A$ is defined to be
\begin{equation}
\label{E1.5.3}\tag{E1.5.3}
PH^q(A):=\frac{\ker d_{\pi}^q}{\im d^{q-1}_{\pi}}.
\end{equation}
It is clear from the definition that the 1st Poisson cohomology 
of $A$ is 
\begin{equation}
\label{E1.5.4}\tag{E1.5.4}
PH^1(A):=\frac{{\text{the set of Poisson derivations}}}
{{\text{the set of Hamiltonian derivations}}}.
\end{equation}

If $A$ is a quadratic Poisson algebra $\Bbbk[x,y,z]$ (with 
$\deg(x)=\deg(y)=\deg(z)=1$), then the complex
$({\mathfrak X}^{\bullet}(A),d_{\pi})$ is \cite[(15)]{Pi1}
$$0\to A\to (A[1])^{\oplus 3} \to (A[2])^{\oplus 3}
\to A[3]\to 0.$$
By the additivity of Hilbert series, we have
\begin{equation}
\label{E1.5.5}\tag{E1.5.5}
\sum_{i=0}^3 (-1)^{i} h_{PH^i(A)}(t)=-t^{-3}.
\end{equation}
It is easy to check that 
\begin{enumerate}
\item[(a)]
the lowest degree of nonzero elements in $PH^0(A)$ is 0 
and $PH^0(A)_0=\Bbbk$.
\item[(b)]
the lowest degree of nonzero elements in $PH^1(A)$ is $\geq -1$.
\item[(c)]
the lowest degree of nonzero elements in $PH^2(A)$ is 
$\geq -2$. 
\item[(d)]
the lowest degree of nonzero elements in $PH^3(A)$ is $-3$ 
and $PH^3(A)_{-3}=\Bbbk$.
\end{enumerate}
If $A$ is further unimodular, then
\begin{enumerate}
\item[(e)]
the lowest degree of nonzero elements in $PH^2(A)$ is $-2$ 
and $PH^2(A)_{-2}=\Bbbk^{\oplus 3}$.
\end{enumerate}

A natural operation on ${\mathfrak X}^{\bullet}(A)$ is the 
Schouten bracket
$$[\cdot, \cdot]_{S}: {\mathfrak X}^{p}(A)
\times {\mathfrak X}^{q}(A)\to {\mathfrak X}^{p+q-1}(A)$$
for all $p,q\geq 0$. We refer to \cite[Section 3.3.2]{LPV}
for the precise definition. By \cite[(4.5)]{LPV},
$$d_{\pi}(\cdot) =-[\cdot ,\pi]_{S}.$$
By \cite[Proposition 3.7]{LPV}, $({\mathfrak X}^{\bullet}(A),
\wedge, [\cdot,\cdot]_S)$ is a Gerstenhaber algebra .

Let $A$ or $(A, \pi)$ be a Poisson algebra with Poisson bracket
$\pi$. Let $\xi$ be any nonzero scalar. We define a new Poisson
bracket $\pi_{\xi}:=\xi \pi$ or $\{-,-\}_{\xi}:=\xi\{-,-\}$.
Then it is easy to see that $A':=(A, \pi_{\xi})$ is indeed
a Poisson algebra. In general, $A'$ is not isomorphic to 
$A$, but they are closely related as follows.

\begin{lemma}
\label{xxlem1.6}
Retain the notations as above.
Let $d_{\pi}^q$ {\rm{(}}resp. $d_{\pi'}^q${\rm{)}} be the 
differential of ${\mathfrak X}^{\bullet}(A)$ {\rm{(}}resp. 
${\mathfrak X}^{\bullet}(A')${\rm{)}} as defined in \eqref{E1.5.1}.
\begin{enumerate}
\item[(1)]
$d_{\pi'}^q=\xi d_{\pi}^q$ for all $q$.
\item[(2)]
$\ker d_{\pi'}^q=\ker d_{\pi}^q$ for all $q$.
\item[(3)]
$\im d_{\pi'}^q=\im d_{\pi}^q$ for all $q$.
\item[(4)]
$PH^q(A)=PH^q(A')$ for all $q$.
\end{enumerate}
\end{lemma}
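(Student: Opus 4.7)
The plan is to read off everything from the explicit coboundary formula \eqref{E1.5.1} and observe that the Poisson bracket enters each term \emph{linearly}, so rescaling $\pi$ by $\xi$ simply rescales $d_\pi^q$ by $\xi$. A preliminary point worth noting is that, since the space ${\mathfrak X}^q(A)$ of skew-symmetric $q$-derivations depends only on the underlying commutative algebra $A$ and not on the Poisson bracket, we have ${\mathfrak X}^q(A) = {\mathfrak X}^q(A')$ as $\Bbbk$-vector spaces for every $q$. Thus $d_\pi^q$ and $d_{\pi'}^q$ are two $\Bbbk$-linear maps on the \emph{same} graded vector space, and all four assertions can be phrased as statements about these two maps.

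For (1), I would fix $Q \in {\mathfrak X}^q(A)$ and $F_0,\ldots,F_q \in A$ and inspect the two sums in \eqref{E1.5.1}. In the first sum, each summand has the form $(-1)^i \{F_i, Q[\ldots]\}$, containing a single occurrence of $\{-,-\}$. In the second sum, each summand has the form $(-1)^{i+j} Q[\{F_i,F_j\},\ldots]$, again with a single occurrence of $\{-,-\}$ (the skew-symmetric multiderivation $Q$ is multilinear in its slots, so replacing the first slot by $\xi\{F_i,F_j\}$ pulls out $\xi$). Replacing $\{-,-\}$ by $\{-,-\}_\xi = \xi\{-,-\}$ therefore multiplies each summand by $\xi$, so $d_{\pi'}^q(Q) = \xi\, d_\pi^q(Q)$, which is exactly (1).

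Parts (2), (3), (4) now follow formally from (1) using $\xi \neq 0$. For (2), $Q \in \ker d_{\pi'}^q$ iff $\xi\, d_\pi^q(Q) = 0$ iff $d_\pi^q(Q) = 0$. For (3), since scalar multiplication by $\xi$ is a bijection on ${\mathfrak X}^{q-1}(A) = {\mathfrak X}^{q-1}(A')$, we have
\begin{equation*}
\im d_{\pi'}^q \;=\; \{\xi\, d_\pi^q(Q) : Q \in {\mathfrak X}^q(A)\} \;=\; \{d_\pi^q(\xi Q) : Q \in {\mathfrak X}^q(A)\} \;=\; \im d_\pi^q.
\end{equation*}
Finally, (4) is the immediate consequence of (2) and (3) applied at degrees $q$ and $q-1$ in the definition \eqref{E1.5.3} of $PH^q$.

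There is essentially no obstacle here; the only substantive check is the linearity observation for part (1), and even that is visible by eye from the three low-degree formulas for $d_\pi^0, d_\pi^1, d_\pi^2$ already displayed right after \eqref{E1.5.1}. The remaining parts are purely formal manipulations that use nothing beyond $\xi \in \Bbbk^\times$.
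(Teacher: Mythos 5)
Your proposal is correct: the paper states Lemma \ref{xxlem1.6} without proof, treating it as immediate, and your argument — that each term of \eqref{E1.5.1} contains exactly one occurrence of the bracket, so $d_{\pi'}^q=\xi d_{\pi}^q$, with (2)--(4) following formally from $\xi\neq 0$ — is precisely the straightforward verification being left to the reader. Nothing is missing.
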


\section{Twists of graded Poisson algebras}
\label{xxsec2}
Let $G$ be an abelian group and $A$ be a $G$-graded Poisson 
algebra (namely, both the multiplication $\cdot$ and the Poisson bracket $\{-,-\}$ of $A$ are 
graded of degree 0). We use $g$ for elements in $G$. If 
$a$ is a homogeneous element in $A$, we use $|a|$ to denote 
its degree in $G$.

The aim of this section is to define a Poisson version of 
the graded twist of graded associative algebras \cite{Zh}.

\begin{definition}
\label{xxdef2.1} 
Let $\delta:=\{\delta_{g}\mid g\in G\}$ be a set of graded 
derivations of $A$ (of degree 0). We say $\delta$ is a 
{\it Poisson twisting system} if it satisfies the following 
conditions 
\begin{enumerate}
\item[(1)]
For all $g,h\in G$,
\begin{equation}
\label{E2.1.1}\tag{E2.1.1}
\delta_{g} \delta_{h}~=~\delta_{h}\delta_{g}.
\end{equation} 
\item[(2)]
For homogeneous elements $a,b\in A$,
\begin{equation}
\label{E2.1.2}\tag{E2.1.2}
\delta_{|ab|}~ =~\delta_{|a|}+\delta_{|b|}.
\end{equation}
\item[(3)]
For homogeneous elements $a,b,c,\in A$,
\begin{align}
\label{E2.1.3}\tag{E2.1.3}
 a[\delta_{|a|}(\{b,c\})-\{\delta_{|a|}(b),c\}
-\{b, \delta_{|a|}(c)\}]
-b&[\delta_{|b|}(\{a,c\})-\{\delta_{|b|}(a),c\}
-\{a, \delta_{|b|}(c)\}]\\
+c[\delta_{|c|}(\{a,b\})-\{\delta_{|c|}(a),b\}
-\{a, \delta_{|c|}(b)\}]&=0.
\notag
\end{align}
\end{enumerate}
\end{definition}

\begin{remark}
\label{xxrem2.2} 
\begin{enumerate}
\item[(1)]
The definition of a Poisson twisting system is a ``translation'' 
of the twisting system in the setting of graded associative 
algebras given in \cite[Definition 2.1]{Zh}.

\item[(2)]
Let 
\begin{align}
\label{E2.2.1}\tag{E2.2.1}
p(\{-,-\},\delta\; a,b,c)~:=~
&a[\delta_{|a|}(\{b,c\})-\{\delta_{|a|}(b),c\}
-\{b, \delta_{|a|}(c)\}]\\
\notag
&-b[\delta_{|b|}(\{a,c\})-\{\delta_{|b|}(a),c\}
-\{a, \delta_{|b|}(c)\}]\\
\notag
&+c[\delta_{|c|}(\{a,b\})-\{\delta_{|c|}(a),b\}
-\{a, \delta_{|c|}(b)\}].
\end{align}
Then \eqref{E2.1.3} is equivalent to $p(\{-,-\};\delta;a,b,c)=0$.
If each $\delta_g$ is a Poisson derivation, it is 
automatic that $p(\{-,-\},\delta,;a,b,c)=0$. The converse 
is not true, see Example \ref{xxex2.6}. 
\item[(3)]
Suppose $G={\mathbb Z}$ and let $\phi=\delta_1$. By \eqref{E2.1.2}, 
$\delta_n=n\phi$. It is clear that 
$$p(\{-,-\},\delta\; a,b,c)=(E\wedge d_{\pi}^1 (\phi))(a,b,c),$$
which implies that \eqref{E2.1.3} is equivalent to 
$E\wedge d_{\pi}^1(\phi)=0$.
By \cite[Sect. 4.3]{LPV} and the fact that $d_{\pi}^1(E)=0$, 
the equation $E\wedge d_{\pi}^1(\phi)=0$ is equivalent to 
$d_{\pi}^2 (E\wedge \phi)=0$. Assume that $a,b,c$ are homogeneous
of degree one. Then 
\begin{align}
\label{E2.2.2}\tag{E2.2.2}
p(\{-,-\},\delta; a,b,c)~=~
&|a|a[\phi(\{b,c\})-\{\phi(b),c\}
-\{b, \phi(c)\}]\\
\notag
&-|b|b[\phi(\{a,c\})-\{\phi(a),c\}
-\{a, \phi(c)\}]\\
\notag
&+|c|c[\phi(\{a,b\})-\{\phi(a),b\}
-\{a, \phi(b)\}]
\end{align}
which agrees with $p(\{-,-\}, \phi; a,b,c)$ as defined in 
\eqref{E0.3.1}.
\item[(4)]
Let $G={\mathbb Z}$ and $A$ be a ${\mathbb Z}$-graded Poisson 
algebra. A convenient Poisson twisting system is constructed as 
follows. Let $\phi$ be a graded Poisson derivation of $A$ (namely,
$d_{\pi}^1(\phi)=0$). For 
each $n\in {\mathbb Z}$, let $\delta_n:=n \phi$ and 
$\delta:=\{\delta_{n} \mid n\in {\mathbb Z}\}$. Then 
\eqref{E2.1.1} and \eqref{E2.1.2} are obvious and \eqref{E2.1.3} 
follows from the fact that $\delta_{n}$ is a Poisson derivation, 
see part (2) or (3).
\end{enumerate}
\end{remark}

\begin{example}
\label{xxex2.3}
Let $G={\mathbb Z}/(n)$ for some positive integer $n$. Let 
$A$ be a $G$-graded Poisson algebra and $\delta$ be a graded 
Poisson derivation of $A$. Suppose $p:={\text{char}}\; \Bbbk$ 
is positive. If $p\mid n$, let $\delta_i=i \delta$ for all 
$i\in G$. Then $\{\delta_i\mid i\in G\}$ is a Poisson twisting 
system. If $p\nmid n$, then there is no nontrivial Poisson 
twisting system for $A$.
\end{example}

Let $A$ be a $G$-graded Poisson algebra and let $\delta
:=\{\delta_g\mid g\in G\}$ be a system of derivations of $A$. 
We define 
\begin{equation}
\label{E2.3.1}\tag{E2.3.1}
\langle a, b \rangle :=\{a,b\}+ a \delta_{|a|}(b)-
b\delta_{|b|}(a)
\end{equation}
for all homogeneous elements $a,b\in A$.

\begin{theorem}
\label{xxthm2.4}
Let $\delta:=\{\delta_{g}\mid g\in G\}$ be a set of 
graded derivations of $A$ satisfying \eqref{E2.1.1} 
and \eqref{E2.1.2}. Then $(A, \langle \cdot, \cdot \rangle)$ 
is a Poisson algebra if and only if \eqref{E2.1.3} holds.
\end{theorem}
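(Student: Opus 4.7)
The plan is to verify the three Poisson-algebra axioms for $(A, \langle\cdot,\cdot\rangle)$ in turn. Write $D_a := \delta_{|a|}$ for homogeneous $a$. Bilinearity and skew-symmetry are immediate from \eqref{E2.3.1} together with the skew-symmetry of $\{-,-\}$. The Leibniz rule $\langle a, bc\rangle = b\langle a,c\rangle + \langle a,b\rangle c$ follows by expanding both sides, using the Leibniz rule for $\{-,-\}$ and for each $\delta_g$, combined with the additivity \eqref{E2.1.2} (which identifies $\delta_{|bc|} = D_b + D_c$); neither \eqref{E2.1.1} nor \eqref{E2.1.3} is invoked at this step. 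So the entire content of the theorem lies in the Jacobi identity.

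For the Jacobi identity, decompose $\langle a,b\rangle = \{a,b\} + \tau(a,b)$ with $\tau(a,b) := aD_a(b) - bD_b(a)$, and expand the Jacobiator
\[
\tilde J(a,b,c) := \langle a, \langle b,c\rangle\rangle + \langle b, \langle c,a\rangle\rangle + \langle c, \langle a,b\rangle\rangle,
\]
using $\delta_{|\langle b,c\rangle|} = D_b + D_c$ from \eqref{E2.1.2}. The resulting terms fall naturally into six types by shape: (A) the Jacobiator of $\{-,-\}$; (B) terms $\{?,?\}\, D_?(?)$; (C) terms $?\,\{?, D_?(?)\}$; (D) terms $?\, D_?(\{?,?\})$; (E) products $?\, D_?(?)\, D_?(?)$; and (F) second-derivative terms $?\cdot?\cdot D_?D_?(?)$. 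Type (A) vanishes by the Jacobi identity for $\{-,-\}$; type (F) pairs off cyclically using commutativity of $A$ together with the commutation relation \eqref{E2.1.1} on the $\delta_g$; type (B) groups by the three brackets $\{a,b\},\{b,c\},\{c,a\}$ and each group collapses by skew-symmetry of $\{-,-\}$; and the eighteen terms of type (E) pair off using commutativity of the product in $A$. What remains — types (C) and (D) — reorganizes, once collected by the outer coefficient $a$, $b$, or $c$ and combined via $\{D_a(b),c\} = -\{c,D_a(b)\}$, into precisely the expression $p(\{-,-\},\delta; a,b,c)$ of \eqref{E2.2.1}. Hence $\tilde J(a,b,c) = p(\{-,-\},\delta; a,b,c)$, so the Jacobi identity for $\langle\cdot,\cdot\rangle$ is equivalent to \eqref{E2.1.3}, proving the theorem.

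The main obstacle is purely combinatorial bookkeeping: the cyclic sum produces on the order of seventy-five signed monomials, and assigning each to the correct type while tracking signs (from $\{b,a\} = -\{a,b\}$ and from the definition of $\tau$) demands care. The three hypotheses play complementary roles: \eqref{E2.1.2} is needed at the outset to identify $\delta_{|\langle b,c\rangle|}$ and (in the Leibniz step) $\delta_{|bc|}$; \eqref{E2.1.1} is needed only to annihilate the type (F) second-derivative terms; and \eqref{E2.1.3} appears at the very end as precisely the vanishing of the irreducible remainder coming from types (C) and (D).
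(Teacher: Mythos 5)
Your proof is correct and takes essentially the same route as the paper: skew-symmetry is immediate, the Leibniz rule uses only \eqref{E2.1.2}, and the Jacobiator of $\langle\cdot,\cdot\rangle$ is expanded directly and reduced---via the Jacobi identity for $\{-,-\}$, the commutation relation \eqref{E2.1.1}, and \eqref{E2.1.2}---to exactly $p(\{-,-\},\delta;a,b,c)$, so the Jacobi identity holds if and only if \eqref{E2.1.3} does. The only difference is organizational: your type (A)--(F) bookkeeping replaces the paper's term-by-term expansion of the three double brackets, and your checks (types (B), (E), (F) cancelling, (C)+(D) assembling into $p$) do verify correctly.
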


\begin{proof}
If $G={\mathbb Z}$, there is a shorter proof using the Schouten 
bracket. For a general abelian group $G$, we make the following 
direct computation.

Claim 1: $\langle \cdot,\cdot\rangle$ 
is skew-symmetric. This claim follows immediately from 
\eqref{E2.3.1}.

Claim 2: for every homogeneous element $a$, $\langle a, -\rangle$ 
is a derivation.

For homogeneous elements $a,b,c$ in $A$, we have 
$$\begin{aligned}
\lan a, bc \ran &=\{a, bc\} + a \delta_{|a|} (bc)-bc \delta_{|bc|}(a)\\
&=\{a, b\}c+\{a,c\}b +a (b\delta_{|a|} (c)+\delta_{|a|}(b)c)-bc \delta_{|bc|}(a)\\
\lan a, b\ran c&=(\{a, b\} +a \delta_{|a|}(b)-b \delta_{|b|}(a))c\\
b \lan a, c\ran&=b(\{a, c\} +a \delta_{|a|} (c)-c \delta_{|c|}(a)).
\end{aligned}
$$

By the above and \eqref{E2.1.2}, we obtain that 
$$\langle a, bc\rangle =\langle a,b\rangle c+b\langle a,c\rangle.$$

Claim 3: $\langle \cdot, \cdot \rangle$ satisfies the Jacobi identity
if and only if \eqref{E2.1.3} holds. As a consequence, 
$(A, \langle \cdot, \cdot \rangle)$ is a Poisson algebra if and only 
if \eqref{E2.1.3} holds.

For homogeneous elements $a,b,c$ in $A$, we have 
$$\begin{aligned}
\langle a, \langle b,c\rangle \rangle
&=\{a, \langle b,c\rangle\}+a \delta_{|a|}(\langle b,c\rangle)
-\langle b,c\rangle \delta_{|bc|}(a)\\
&=\{a, (\{b,c\}+b\delta_{|b|}(c)-c\delta_{|c|}(b))\}\\
&\qquad +a \delta_{|a|} (\{b,c\}+b\delta_{|b|}(c)-c\delta_{|c|}(b))\\
&\qquad\quad -(\{b,c\}+b\delta_{|b|}(c)-c\delta_{|c|}(b))\delta_{|bc|}(a)\\
&=\{a, \{b,c\}\}\\
&\quad +\{a, b\}\delta_{|b|}(c)+b\{a, \delta_{|b|}(c)\}\\
&\quad -\{a, c\}\delta_{|c|}(b)-c\{a, \delta_{|c|}(b)\}\\
&\qquad +a\delta_{|a|}(\{b,c\})\\
&\qquad +a\delta_{|a|}(b)\delta_{|b|}(c)+ab \delta_{|a|}\delta_{|b|}(c)\\
&\qquad -a\delta_{|a|}(c)\delta_{|c|}(b)-ac \delta_{|a|}\delta_{|c|}(b)\\
&\qquad\quad -\{b,c\}\delta_{|bc|}(a)-b\delta_{|b|}(c)\delta_{|bc|}(a)
+c\delta_{|c|}(b)\delta_{|bc|}(a)
\end{aligned}
$$
and
$$\begin{aligned}
\langle \langle a,b\rangle,c\rangle
&=\langle c, \langle b,a \rangle \rangle\\
&=\{c, \{b,a\}\}\\
&\quad +\{c, b\}\delta_{|b|}(a)+b\{c, \delta_{|b|}(a)\}\\
&\quad -\{c, a\}\delta_{|a|}(b)-a\{c, \delta_{|a|}(b)\}\\
&\qquad +c\delta_{|c|}(\{b,a\})\\
&\qquad +c\delta_{|c|}(b)\delta_{|b|}(a)+cb \delta_{|c|}\delta_{|b|}(a)\\
&\qquad -c\delta_{|c|}(a)\delta_{|a|}(b)-ca \delta_{|c|}\delta_{|a|}(b)\\
&\qquad\quad -\{b,a\}\delta_{|ba|}(c)-b\delta_{|b|}(a)\delta_{|ba|}(c)
+a\delta_{|a|}(b)\delta_{|ba|}(c)
\end{aligned}
$$
and
$$\begin{aligned}
\langle b, \langle a,c \rangle \rangle
&=\{b, \{a,c\}\}\\
&\quad +\{b, a\}\delta_{|a|}(c)+a\{b, \delta_{|a|}(c)\}\\
&\quad -\{b, c\}\delta_{|c|}(a)-c\{b, \delta_{|c|}(a)\}\\
&\qquad +b\delta_{|b|}(\{a,c\})\\
&\qquad +b\delta_{|b|}(a)\delta_{|a|}(c)+ba \delta_{|b|}\delta_{|a|}(c)\\
&\qquad -b\delta_{|b|}(c)\delta_{|c|}(a)-bc \delta_{|b|}\delta_{|c|}(a)\\
&\qquad\quad -\{a,c\}\delta_{|ac|}(b)-a\delta_{|a|}(c)\delta_{|ac|}(b)
+c\delta_{|c|}(a)\delta_{|ac|}(b).
\end{aligned}
$$

Using the Jacobi identity
$$-\{a, \{b, c\}\}
+\{\{a, b\} , c\}
+\{b, \{a, c\} \}=0,$$ 
\eqref{E2.1.1}, and \eqref{E2.1.2}, we can simplify 
$$-\lan a, \lan b, c\ran \ran
+\lan \lan c, b\ran , a\ran
+\lan b, \lan a, c\ran \ran$$ 
to 
$$p(\{-,-\},\delta;a,b,c).$$
Therefore $\lan \cdot, \cdot\ran$ satisfies the Jacobi 
identity if and only if $p(\delta;a,b,c)=0$. Claim 3 
follows. The consequence is clear.
\end{proof}

\begin{definition}
\label{xxdef2.5} Let $\delta:=\{\delta_{g}\mid g\in G\}$ be 
a Poisson twisting system of a $G$-graded Poisson algebra $A$. 
Then the new Poisson algebra $(A,\langle -,- \rangle)$ 
given in Theorem \ref{xxthm2.4} is called the 
{\it twist of $A$ by $\delta$} and denoted by $A^{\delta}$.
\end{definition}

\begin{example}
\label{xxex2.6}
Let $A=\Bbbk[x,y]$ be a ${\mathbb Z}$-graded Poisson 
algebra defined by $\{x,y\}=x^2$. Let $\phi$ be the 
derivation sending $x\to -x$ and $y\to y-x$. Let
$\delta_n=n\phi$. It is easy to see that 
$$\begin{aligned}
d_{\pi}^1(\phi)[x,y]:&=
-\phi[\{x,y\}]+\{x,\phi[y]\}+\{\phi[x],y\}\\
&=-\phi[x^2]+\{x,y-x\}+\{-x,y\}=2x^2\neq 0
\end{aligned}
$$ 
which implies that $\phi$ is not a Poisson derivation. 

We claim that $\delta:=\{\delta_n\}$ is a Poisson twisting 
system. Let $f$ be the derivation of $A$ determined by
$$f(x)=0, \quad {\text{and}}\quad f(y)=-x.$$
It is easy to verify that $f$ is a Poisson derivation.
By Remark \ref{xxrem2.2}(4), $f':=\{nf\mid n\in {\mathbb Z}\}$
is a Poisson twisting system, and by Theorem \ref{xxthm2.4}, 
$A^{f'}$ is equipped with a Poisson structure such that
$$\langle x, y\rangle=\{x,y\}+x f(y)-yf(x)=x^2-x^2-0=0.$$
Therefore $A^{f'}$ has trivial Poisson structure. 
Let $g$ be the Poisson derivation of $A^{f'}$ determined by
$$g(x)=-x, \quad {\text{and}}\quad g(y)=y.$$
Let $g'=\{ng\mid n\in {\mathbb Z}\}$. By Remark 
\ref{xxrem2.2}(4), $g'$ is a Poisson twisting system of  $A^{f'}$
and the Poisson structure of $(A^{f'})^{g'}$ is determined by,
for all homogeneous elements $a,b\in A$,
$$\begin{aligned}
\{a,b\}_{new}:&=\langle a, b\rangle +|a|a g(b)-|b|bg(a)\\
&=\{a,b\}+ |a|a f(b)-|b|bf(a)+|a|a g(b)-|b|bg(a)\\
&=\{a,b\}+a h_{|a|}(b)- b h_{|b|}(a)
\end{aligned}
$$
where $h_n=nf+ng$ for all $n\in {\mathbb Z}$. Since 
$f+g$ is a derivation of $A$, by Theorem \ref{xxthm2.4},
$h':=\{h_n\mid n\in {\mathbb Z}\}$ is a Poisson twisting system 
of $A$. It is clear that $\delta=h'$. So $\delta$ is a 
Poisson twisting system. 

Since $\delta$ is a Poisson twisting system, by Remark 
\ref{xxrem2.2}(3), $\phi$ is a graded semi-Poisson derivation. 
By the first paragraph, $\phi$ is not a Poisson derivation.
\end{example}

\begin{lemma}
\label{xxlem2.7}
Suppose $G$ is cyclic. Then the set of Poisson twisting systems 
of $A$ is a $\Bbbk$-vector space.
\end{lemma}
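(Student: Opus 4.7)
The plan is to reduce a Poisson twisting system on a cyclic group to the data of a single derivation and then check that each of the three defining axioms cuts out a $\Bbbk$-linear condition, so that the set is closed under componentwise addition and scalar multiplication.

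First, I would pick a generator $g_0$ of $G$ and set $\phi := \delta_{g_0}$. Setting $a=b=1$ in \eqref{E2.1.2} forces $\delta_0=0$, and iterating \eqref{E2.1.2} along products of homogeneous elements of degree $g_0$ shows $\delta_{kg_0}=k\phi$ for all relevant $k$. Thus the system $\delta$ is determined $\Bbbk$-linearly by the single derivation $\phi$; in the finite case $G\cong \mathbb{Z}/n$ this also imposes the additional $\Bbbk$-linear torsion condition $n\phi = 0$, compatible with Example \ref{xxex2.3}.

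Next I would verify the three axioms under this parametrization. Condition \eqref{E2.1.1} is automatic: every $\delta_g$ is an integer multiple of the single derivation $\phi$, so the $\delta_g$ pairwise commute. Condition \eqref{E2.1.2} has already been used to produce the parametrization, and that parametrization is manifestly $\Bbbk$-linear in $\phi$. Condition \eqref{E2.1.3}, the vanishing of $p(\{-,-\},\delta;a,b,c)$ defined in \eqref{E2.2.1}, is $\Bbbk$-linear in the family $(\delta_g)$ by direct inspection of the formula; after the substitution $\delta_g = k_g \phi$, it becomes $\Bbbk$-linear in $\phi$, cutting out a linear subspace of the space of graded derivations of degree $0$. (In the case $G={\mathbb Z}$, this linearity is already captured by Remark \ref{xxrem2.2}(3), which identifies \eqref{E2.1.3} with the equation $E\wedge d^1_\pi(\phi)=0$.)

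Together these observations identify the set of Poisson twisting systems on $A$ with the intersection of the kernels of explicit $\Bbbk$-linear maps on $\Der(A)_0$, which is visibly a $\Bbbk$-subspace; componentwise addition and scalar multiplication of twisting systems correspond to the analogous operations on the generating derivation $\phi$. The only part where cyclicity is genuinely used, rather than cosmetic, is in collapsing \eqref{E2.1.1}: the commutator condition is quadratic in $(\delta_g)$ and would not generally be preserved under sums of two twisting systems if the derivations $\delta_g^{}, \delta'_h$ and $\delta'_g,\delta_h^{}$ could fail to commute, but under our parametrization by a single $\phi$ this quadratic obstruction vanishes identically.
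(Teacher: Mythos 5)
Your argument is correct and is essentially the paper's own proof: both reduce a twisting system on a cyclic group to a single derivation $\phi$ with $\delta_{kg_0}=k\phi$, observe that this makes \eqref{E2.1.1} automatic for sums (the only place cyclicity is needed), and note that \eqref{E2.1.2} and \eqref{E2.1.3} are $\Bbbk$-linear conditions. Your version just phrases this as cutting out a linear subspace of $\Der(A)_0$ rather than checking closure under $+$ and scalar multiplication directly, which is the same content.
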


\begin{proof} Let $\delta$ and $\varphi$ be two Poisson twisting 
systems. It is clear that $c \delta$ is a Poisson twisting 
system for all $c\in \Bbbk$. It remains to show 
$h:=\delta+\varphi$ is a Poisson twisting system. 

Since $G$ is cyclic, $h_n=nh_1$. So \eqref{E2.1.1} is clear.
Now \eqref{E2.1.2} and \eqref{E2.1.3} hold as 
these are ``linear'' in terms of $\delta$.
\end{proof}

\begin{remark}
\label{xxrem2.8}
If $G$ is ${\mathbb Z}^2$, then the set of 
Poisson twisting systems of $(\Bbbk[x_1,x_2,x_3],0)$ 
with $\deg x_1=\deg x_2=(1,0)$ and $\deg x_3=(0,1)$ 
is not a $\Bbbk$-vector space. To see this, we consider 
two graded Poisson derivations $\delta_1$ and $\phi_1$
that are not commuting (for example, $\delta_1: 
x_1\to x_1, x_2\to 0, x_3\to 0$
and $\phi_1: x_1\to x_2, x_2\to 0, x_3\to 0$). 
Let $\delta_{(n,m)}=n\delta_1$ and $\phi_{(n,m)}=m\phi_1$.
It is easy to see that both $\delta$ and $\phi$ are 
twisting systems of the $G$-graded Poisson algebra 
$(\Bbbk[x_1,x_2,x_3],0)$. We define $\delta+\phi$ by
$(\delta+\phi)_{(n,m)}=n\delta_1+m\phi_1$ for all $(n,m)\in 
{\mathbb Z}^2$. Since
$\delta_1$ and $\phi_1$ are not commuting, 
we see that \eqref{E2.1.1} fails for $\delta+\phi$.
\end{remark}

As noted before a derivation $\delta$ of $A$ is Poisson if 
and only if $d_{\pi}^1(\delta)=0$. By Definition 
\ref{xxdef0.4}, a graded derivation $\delta$ of a 
${\mathbb Z}$-graded Poisson algebra $A$ is {\it semi-Poisson} 
if $E\wedge d_{\pi}^1(\delta)=0$.

Next we show that the Poisson twisting systems induce an 
equivalence relation. Let $A$ be a $G$-graded commutative
algebra. Two graded Poisson structures $\pi$ and $\pi'$ 
on $A$ are called equivalent if $(A,\pi')$ is a graded twist of  
$(A,\pi)$. In this case we write $(A,\pi)\sim (A,\pi')$.

\begin{proposition}
\label{xxpro2.9} 
Suppose $G$ is cyclic. Then 
$\sim$ is an equivalence relation.
\end{proposition}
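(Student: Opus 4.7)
The plan is to verify the three axioms of an equivalence relation. Reflexivity is immediate from the zero twisting system. Because $G$ is cyclic with generator $g_0$, any Poisson twisting system of a $G$-graded Poisson algebra $(A,\pi)$ is determined by a single graded derivation $\phi := \delta_{g_0}$ via $\delta_{ng_0}=n\phi$, and the twist Poisson bracket reduces to $\pi' = \pi + E\wedge\phi$ (in the sense of \eqref{E2.3.1}). Thus $(A,\pi)\sim(A,\pi+E\wedge\phi)$ whenever $\phi$ is a degree-zero semi-Poisson derivation of $(A,\pi)$.

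The heart of the argument is a single transformation identity. For any degree-zero graded derivation $\psi$ of $A$, writing $F_\pi(x,y) := \psi(\{x,y\}) - \{\psi(x),y\} - \{x,\psi(y)\}$, a direct expansion of $\{-,-\}'=\{-,-\}+E\wedge\phi$ in each of the three places yields
\[
F_{\pi'}(x,y) \;=\; F_\pi(x,y) + E(x)\,[\psi,\phi](y) - E(y)\,[\psi,\phi](x),
\]
where the mixed products of the form $|x|\psi(x)\phi(y)$ cancel because $\psi$ and $\phi$ are degree zero, and the compositions $\psi\phi$ and $\phi\psi$ assemble into the commutator $[\psi,\phi]$. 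Substituting this into the cyclic sum $p(\pi',\psi;a,b,c)$ of \eqref{E0.3.1} introduces an extra contribution
\[
|a|a\bigl(|b|b\,[\psi,\phi](c) - |c|c\,[\psi,\phi](b)\bigr) - |b|b\bigl(|a|a\,[\psi,\phi](c) - |c|c\,[\psi,\phi](a)\bigr) + |c|c\bigl(|a|a\,[\psi,\phi](b) - |b|b\,[\psi,\phi](a)\bigr),
\]
which consists of six terms that cancel in pairs. Hence $p(\pi',\psi;a,b,c) = p(\pi,\psi;a,b,c)$ for every degree-zero derivation $\psi$.

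Symmetry now follows by applying this identity with $\psi = \phi$ (so $[\psi,\phi]=0$ trivially): $\phi$ is semi-Poisson for $\pi'$, hence so is $-\phi$, and evidently $\pi' + E\wedge(-\phi) = \pi$, exhibiting $(A,\pi')\sim(A,\pi)$ via the twisting system $\{-n\phi\}$. For transitivity, if $\pi'=\pi+E\wedge\phi$ and $\pi''=\pi'+E\wedge\psi$, then $\pi'' = \pi + E\wedge(\phi+\psi)$; the identity shows that $\psi$, being semi-Poisson for $\pi'$, is also semi-Poisson for $\pi$, and Lemma \ref{xxlem2.7} then gives that $\phi+\psi$ is a Poisson twisting system of $(A,\pi)$ whose twist is $\pi''$. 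The main obstacle is the verification of the transformation identity for $F_\pi$, which is a careful but elementary bookkeeping of the cross-terms arising when $\psi$ is applied to products and when $\{-,-\}'$ is unpacked; once this is in hand, both symmetry and transitivity drop out at once from the cyclic-sum cancellation together with Lemma \ref{xxlem2.7}.
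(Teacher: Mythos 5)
Your proof is correct, and it overlaps with the paper's argument in one place while diverging in another. For symmetry, your computation is essentially the paper's: the paper verifies directly that the "defect" $\delta_{|a|}(\langle b,c\rangle)-\langle\delta_{|a|}(b),c\rangle-\langle b,\delta_{|a|}(c)\rangle$ equals the corresponding defect for $\{-,-\}$, which is exactly the special case of your transformation identity in which the commutator term vanishes (there $\psi$ commutes with the twisting derivations by \eqref{E2.1.1}); both proofs then conclude that $-\delta$ twists $A^{\delta}$ back to $A$. For transitivity the routes genuinely differ. The paper avoids any further computation: it sets $\sigma_g=\delta_g+\phi_g$, notes that cyclicity gives \eqref{E2.1.1}--\eqref{E2.1.2}, observes that the bracket of $(A^{\delta})^{\phi}$ coincides with the $\sigma$-twisted bracket of $A$, and then invokes the "only if" direction of Theorem \ref{xxthm2.4} to conclude that $\sigma$ satisfies \eqref{E2.1.3}, so $A^{\sigma}=(A^{\delta})^{\phi}$. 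You instead prove the stronger invariance statement $p(\pi',\psi;a,b,c)=p(\pi,\psi;a,b,c)$ for every degree-zero derivation $\psi$ (your identity $F_{\pi'}=F_{\pi}+E(x)[\psi,\phi](y)-E(y)[\psi,\phi](x)$ plus the pairwise cancellation in the cyclic sum, both of which check out), deduce that $\psi$ is semi-Poisson for $\pi$, and then use Lemma \ref{xxlem2.7} to add the two twisting derivations. What your route buys is a direct, characteristic-free proof that the semi-Poisson condition is invariant under twisting — in effect the cyclic-group analogue of Lemma \ref{xxlem4.2} (Theorem \ref{xxthm0.5}(2)), which the paper only establishes later via the Schouten bracket in characteristic zero — at the cost of a longer explicit computation; the paper's composition-plus-Theorem-\ref{xxthm2.4} trick is more economical but yields no such byproduct. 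The only point to be slightly careful about (which the paper glosses over in the same way) is that for a finite cyclic $G$ the identification $\delta_{ng_0}=n\phi$ and the integer degrees $|a|$ are only well defined modulo the order of $G$, so the formula $\pi'=\pi+E\wedge\phi$ should be read with that consistency in mind; this does not affect the validity of your argument.
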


\begin{proof} It is clear that $(A,\pi)\sim (A,\pi)$
by taking the trivial Poisson twisting system $\delta$.
So $\sim$ is reflexive.

To prove the symmetry of $\sim$, we suppose that $(A,\pi')$ is a 
graded twist of $(A,\pi)$ by $\delta$. We claim that 
$-\delta$ is a Poisson twisting system of $(A,\pi')$. 
If this claim is proved, then it is obvious that
$(A,\pi')^{-\delta}=(A,\pi)$ as desired. It remains to show
that $-\delta$ satisfies \eqref{E2.1.1}, \eqref{E2.2.1}, 
and \eqref{E2.1.3}. The first two are easy. For the last one, 
we compute
$$\begin{aligned}
\delta_{|a|}(\langle b,c\rangle)&-\langle \delta_{|a|}(b),c\rangle
-\langle b, \delta_{|a|}(c)\rangle\\
&=\delta_{|a|}(\{b,c\}+b\delta_{|b|}(c)-c\delta_{|c|}(b))\\
&\quad -[\{\delta_{|a|}(b),c\}+\delta_{|a|}(b)
 \delta_{|b|}(c)-c\delta_{|c|}\delta_{|a|}(b)]\\
&\quad -[\{b,\delta_{|a|}(c)\}+b\delta_{|b|}
 \delta_{|a|}(c)-\delta_{|a|}(c)\delta_{|c|}(b)]\\
&= \delta_{|a|}(\{b,c\})-\{\delta_{|a|}(b),c\}-\{b,\delta_{|a|}(c)\}\\
&\quad
+\delta_{|a|}(b)\delta_{|b|}(c)+b\delta_{|a|}\delta_{|b|}(c)
-\delta_{|a|}(c)\delta_{|c|}(b)-c\delta_{|a|}\delta_{|c|}(b)\\
&\quad 
-\delta_{|a|}(b)\delta_{|b|}(c)+c\delta_{|c|}\delta_{|a|}(b)
-b\delta_{|b|}\delta_{|a|}(c)+\delta_{|a|}(c)\delta_{|c|}(b)\\
&
= \delta_{|a|}(\{b,c\})-\{\delta_{|a|}(b),c\}-\{b,\delta_{|a|}(c)\}
\end{aligned}
$$
which implies that

$$\begin{aligned}
p(\langle -,-\rangle,-\delta; a,b,c)
&=p(\{ -,-\},-\delta; a,b,c)=0.
\end{aligned}
$$
Therefore $-\delta$ is a Poisson twisting system of $A^{\delta}$
and $(A^{\delta})^{-\delta}=A$. So $\sim$ is symmetric.  

The proof above does not use the hypothesis that $G$ is cyclic.
The following part of the proof uses that hypothesis.

To prove the transitivity of $\sim$, we use the idea given in 
Example \ref{xxex2.6}. Suppose $\delta$ is a Poisson twisting 
system of $A$ and $\phi$ a Poisson twisting system of 
$A^{\delta}$. Let $\sigma:=\{\sigma_{g}:=\delta_{g}+\phi_{g}\mid g\in G\}$.

Since $G$ is cyclic, $\sigma_n=n\sigma_1$ by definition 
for all $n\in G$ (and there might be some restriction on ${\text{char}}\; 
\Bbbk$ when $G$ is finite). Therefore
\eqref{E2.1.1}-\eqref{E2.1.2} are obvious. Define 
$$\{a,b\}_{new}=\{a,b\}+a\sigma_{|a|}(b)-b\sigma_{|b|}(a)
=\langle a,b\rangle + a\phi_{|a|}(b)-b\phi_{|b|}(a).$$
Then $\{-,-\}_{new}$ is the Poisson bracket of $(A^{\delta})^{\phi}$.
By Theorem \ref{xxthm2.4}, $\sigma$ is a Poisson twisting 
system of $A$ and $A^{\sigma}=(A^{\delta})^{\phi}$. Therefore
$\sim$ is transitive.
\end{proof}

\begin{remark}
\label{xxrem2.10}
Let $G$ be ${\mathbb Z}^2$ and $A:=\Bbbk[x_0, x_1,x_2,x_3]$ 
with $\deg x_i=(1,0)$ for $i=0,1,2$ and $\deg x_3=(0,1)$.
We claim that $\sim$ is not an equivalence relation among the 
Poisson structures on $A$. We use $Y$ for the 
${\mathbb Z}^2$-graded Poisson algebra with trivial Poisson 
structure.

Let $\delta_1$ be the Poisson derivation of $Y$ sending 
$x_0\to 0$, $x_1\to x_2$, $x_2\to 0$, and $x_3\to 0$. Let 
$\phi_1$ be the Poisson derivation of $Y$ sending 
$x_0\to 0$, $x_1\to x_1$, $x_2\to 0$ and $x_3\to 0$.
We define two Poisson twisting systems as follows.
Let $\delta:=\{\delta_{(n,m)}=n \delta_1\}$
and $\phi:=\{\phi_{(n,m)}=m \phi_1\}$. Since 
$\delta_1$ and $\phi_1$ are Poisson derivations, 
it is easy to verify that $\delta$, $-\delta$ and $\phi$
are Poisson twisting systems. By Theorem \ref{xxthm2.4},
$X:=Y^{-\delta}$ is a Poisson algebra and by the first 
part of the proof of Proposition \ref{xxpro2.9},
$\delta$ is a Poisson twisting system of $X$ and 
$Y=X^{\delta}$. So we have $X\sim Y$. Let $Z=Y^{\phi}$.
Then $Y\sim Z$. We claim that $X\not\sim Z$. Suppose
on the contrary that $X\sim Z$. Then $Z=X^{h}$ for some 
Poisson twisting system $h$ of $X$. Applying 
\eqref{E2.3.1} to pairs of elements of the form 
$(x_i,x_j)$ for all $0\leq i,j\leq 3$, we see that
$h=\delta+\phi$. But it is clear that \eqref{E2.1.1}
fails for $\delta+\phi$ as $\delta_1\phi_1\neq \phi_1\delta_1$.
Therefore there is no Poisson twisting system $h$ such that
$Z=X^{h}$ as desired.

This example suggests that there should be a more general 
definition of twisting systems that induce an equivalence 
relation $\sim$.
\end{remark}

We conclude this section with some examples.

\begin{example}
\label{xxex2.11}
Here are three examples of twists of graded Poisson algebras.
\begin{enumerate}
\item[(1)]
Let $A$ be the Poisson polynomial ring $\Bbbk[x_1,\ldots,x_n]$ 
with trivial Poisson bracket. Consider $A$ as a 
${\mathbb Z}^n$-graded algebra with $\deg (x_i)=e_i$ 
where $e_i=(0,\ldots, 0,1,0,\ldots,0)\in {\mathbb Z}^n$ with 
$1$ in the $i$th position. Let $\{p_{ij}\mid 1\leq i< j 
\leq n\}$ be a subset of $\Bbbk$. For each $i$, define a 
${\mathbb Z}^n$-graded Poisson derivation $\delta_i$ by
$$\delta_i(x_j)=
\begin{cases} p_{ij} x_j & j>i,\\
0 & j\leq i.
\end{cases}
$$
For each $(a_1,\ldots,a_n)\in {\mathbb Z}^n$, let
$\delta_{(a_1,\ldots,a_n)}=\sum_{i=1}^n a_i \delta_i$. 
Since each $\delta_{(a_1,\ldots,a_n)}$ is a graded 
Poisson derivation of $A$, it is easy to see that  
$$\delta:=\{\delta_{(a_1,\ldots,a_n)}\mid 
(a_1,\ldots,a_n)\in {\mathbb Z}^n\}$$ 
is a twisting system
of $A$. By \eqref{E2.3.1}, the Poisson bracket of the new 
Poisson algebra $A^{\delta}$ is determined by
$$\langle x_i, x_j\rangle 
=x_i \delta_i(x_j)-x_j\delta_j(x_i)
=p_{ij}x_i x_j 
\quad
{\text{for all $i<j$.}}$$
\item[(2)]
Let $A$ be the Poisson polynomial ring $\Bbbk[x_1,\ldots,x_n]$ 
with trivial Poisson bracket. Consider $A$ as a 
${\mathbb Z}$-graded algebra with $\deg (x_i)=1$ for all $i$.
Let $\delta_1$ be a Poisson derivation of $A$ determined by
$$\delta_1(x_i)=\begin{cases} -x_{i-1} & i>1,\\
0& i=1.\end{cases}
$$
Let $\delta:=\{\delta_d:=d\delta_1\mid \forall\; d\in {\mathbb Z}\}.$
Since $\delta_1$ is a graded Poisson derivation, $\delta$ is a 
twisting system of $A$. By \eqref{E2.3.1}, the Poisson bracket 
of the new Poisson algebra $A^{\delta}$ is determined by
$$\langle x_i, x_j\rangle 
=x_i \delta(x_j)-x_j\delta(x_i)
=-x_i x_{j-1}+x_jx_{i-1}
\quad
{\text{for all $i<j$.}}$$
When $n=2$, the Poisson bracket of $A^{\delta}$ is determined
by
$$\langle x_2,x_1\rangle=x_1^2. $$
\item[(3)]
Suppose that ${\text{char}}\; \Bbbk=0$. Let $A$ be the $n$th 
Weyl Poisson algebra 
\[\Bbbk[x_1,\ldots,x_n,y_1,\ldots,y_n]\]
with Poisson bracket determined by
$$\{x_i, y_j\}=\delta_{ij}, \quad \{x_i,x_j\}=0, \quad
\{y_i,y_j\}=0$$
for all $1\leq i,j\leq n$. Here $\delta_{ij}$ is the usual 
Kronecker delta. Let $M=(m_{ij})_{n\times n}$ be any 
$n\times n$-matrix over $\Bbbk$. We consider $A$ as a
${\mathbb Z}$-graded Poisson algebra with $\deg(x_i)=1$ and
$\deg(y_i)=-1$ for all $1\leq i\leq n$. Let $\delta_1$ be 
the graded Poisson derivation of $A$ determined by
$$\delta_1(x_i)=\sum_{j=1}^n (-m_{ij}) x_j, 
\quad
\delta_1(y_i)=\sum_{j=1}^n m_{ji} y_j$$
for all $i$. Since $\delta_1$ is a graded Poisson derivation, 
$\delta:=\{n \delta_1\mid n \in {\mathbb Z}\}$ is a twisting 
system of $A$. By \eqref{E2.3.1}, the Poisson bracket of the 
new Poisson algebra $A^{\delta}$ is determined by 
$\langle x_i,x_j\rangle=0=\langle y_i,y_j\rangle$ and
\begin{equation}
\label{E2.11.1}\tag{E2.11.1}
\langle x_i, y_j\rangle 
=\delta_{ij}+ x_i \delta_1(y_j)-y_j\delta_{-1}(x_i)
=\delta_{ij}+x_i (\sum_{s=1}^{n} m_{sj} y_s)
-y_j(\sum_{s=1}^n m_{is} x_s)
\end{equation}
for all $i<j$.

We claim that $A^{\delta}$ is Poisson simple. To avoid some 
tedious analysis, we only give a sketch proof. The first 
step is to assume $\Bbbk$ is algebraically closed by replacing 
$\Bbbk$ by its algebraic closure. Since $\Bbbk$ is algebraically 
closed, we can assume $M$ is upper triangular, namely, 
$m_{ij}=0$ for all $i>j$. Let $M'$ be the matrix after deleting 
the diagonal entries of $M$ and we can similarly define a 
twisting system $\delta'$ using $M'$. By \eqref{E2.11.1}, one sees 
that $A^{\delta}=A^{\delta'}$. In other words, we can assume
that $M$ is strictly upper triangular. Now we can assume that 
$M$ is a block matrix with diagonal block equal to 
$$\begin{pmatrix} 0& 1& 0 &\cdots & 0\\
                0& 0& 1 &\cdots & 0\\
		\cdots &\cdots &\cdots& \cdots & \cdots\\
		            0& 0& 0 &\cdots & 1\\
                0& 0& 0 &\cdots & 0
\end{pmatrix}.
$$
To illustrate the idea, we assume $n=2$ and $M$ is given as above. 
(For general $M$, the proof is more complicated.) The Poisson 
structure of $A^{\delta}$ is determined by
$$\begin{aligned}
\langle x_1,x_2 \rangle &=0,\\
\langle y_1,y_2 \rangle &=0,\\
\langle x_1,y_1 \rangle &=1-x_2 y_1\\
\langle x_1,y_2 \rangle &=x_1y_1-x_2y_2\\
\langle x_2,y_1 \rangle &=0\\
\langle x_2,y_2 \rangle &=1+x_2 y_1
\end{aligned}
$$
Now we are ready to show that every nonzero Poisson 
ideal $I$ of $A^{\delta}$ contains $1$. Let $f$ be a 
nonzero element in $I$. We assume that the $y_2$-degree 
of $f$ is minimal and that $f\not\in \Bbbk$. Write
$f=\sum_{i\geq 0} f_i y_2^i$ where 
$f_i\in \Bbbk[x_1,x_2,y_1]$ for all $i$. If $f_i\neq 0$
for some $i>0$, then $H_{x_2}(f)=\sum_{i\geq 0} f_i
i y_2^{i-1}(1+x_2 y_1)\neq 0$. It is clear that the 
$y_2$-degree of $H_{x_2}(f)$ is smaller than the 
$y_2$-degree of $f$, yielding a contradiction. Therefore
$y_2$-degree of $f$ is zero, namely, $0\neq f\in I\cap
\Bbbk[x_1,x_2,y_1]$. Similarly, by using $H_{y_1}$, one can
show that there is an element $0\neq f\in \Bbbk[x_2,y_1]\cap I$.
Write $f=\sum_{i=0}^s g_i y_1^i$ where $g_i\in \Bbbk[x_2]$
for all $i$. Assume that $f$ has minimal $y_1$-degree.
If the $y_1$-degree of $f$, say $s$, is positive, then
$$H_{x_1}(f)+s f x_2 =\sum_{i=0}^s g_i i y_1^{i-1}
+\sum_{i=0}^s (s-i) g_i y_1^{i} x_2$$
which has $y_1$-degree $s-1$, yielding a contradiction.
Therefore $0\neq f\in \Bbbk[x_2]\cap I$. 
Write $f=\sum_{i=a}^b c_i x_2^i$ where $c_a c_b\neq 0$. 
Assume that $b-a\geq 0$ is minimal. Next we show that
$b-a=0$. Suppose to the contrary that $a<b$. Define
$\Phi(f)=-H_{y_2}(f)-\deg(f) f y_1$. Then 
$$\Phi(f)=\sum_{i=a}^b c_i i x_2^{i-1}+
\sum_{i=a}^b c_i (i-b) x_2^i y_1.$$
Then 
$$\begin{aligned}
H_{x_1}(\Phi(f))+\Phi(f) x_2
&=(\sum_{i=a}^b c_i (i-b) x_2^i)(1-x_2y_1)\\
&\quad +\sum_{i=a}^b c_i i x_2^{i}+
\sum_{i=a}^b c_i (i-b) x_2^i x_2 y_1\\
&=\sum_{i=a}^b c_i (i-b) x_2^i+\sum_{i=a}^b c_i i x_2^{i}\\
&=\sum_{i=a}^b c_i (2i-b) x_2^i\\
&=2 f'(x_2)x_2 -b f\in \Bbbk[x_2]\cap I.
\end{aligned}
$$
This implies that $x_2f'\in I$ and a possible linear combination 
of $f$ and $x_2f'$ gives a nonzero polynomial with smaller $b-a$, 
yielding a contradiction. Therefore we can assume $f=x^a$. 
In this case, $-H_{y_2}(f)=ax_2^{a-1}(1+x_2y_1)=ax_2^{a-1}+afy_1$. 
This implies that $x^{a-1}\in I$ when $a\neq 0$. An induction 
argument shows that $1\in I$ as required.
\end{enumerate}
\end{example}

\section{Proofs of Theorem \ref{xxthm0.2} and Corollary 
\ref{xxcor0.3}}
\label{xxsec3}

For the rest of the paper we assume that ${\rm{char}}\;
\Bbbk=0$. First we recall the definition of {\it divergence} 
of a skew symmetric $k$-derivation for $k\geq 0$
\cite[Sect. 4.4.3]{LPV}. 
A special case is given in Definition \ref{xxdef1.1}.
For $P\in {\mathfrak X}^p(A)$, the {\it internal 
product} $\iota_{P}$ is defined at the beginning
of Section \ref{xxsec1}.

Let $\nu$ be a volume form of $A$ which is in $\Omega^d(A)$
with $d$ being the top degree of nonzero $\Omega^d(A)$. We 
define the {\it star operator} 
$$\star(=\star_A): 
{\mathfrak X}^{\bullet}(A)\to \Omega^{d-\bullet}(A)$$
as follows: for each $k\ge 0$ and $Q\in {\mathfrak X}^k$, 
we set
$$\star_A Q:=\iota_{Q} \nu.$$
So $\star_A$ is a $\Bbbk$-linear map from 
${\mathfrak X}^{k}(A)\to \Omega^{d-k}(A)$
for each $k$. It follows from \eqref{E1.0.2} that
$\star_A$ is an $A$-linear map.

\begin{lemma}
\label{xxlem3.1} 
%Suppose that ${\text{char}}\; \Bbbk=0$. 
Let $B$ be a smooth affine domain of dimension $g$ with 
volume form $\nu$. Then $\star_B$ is an isomorphism.
\end{lemma}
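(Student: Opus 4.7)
The plan is to reduce the statement to a local calculation, using that smoothness of the affine domain $B$ means $\Omega^1(B)$ is a finitely generated projective $B$-module of rank $g$. Consequently $\Omega^k(B)$ is projective of rank $\binom{g}{k}$, and $\mathfrak{X}^k(B) = \wedge^k_B \mathfrak{X}^1(B)$ is too; the existence of a global volume form $\nu$ says that the top exterior power $\Omega^g(B)$ is a free $B$-module of rank one with basis $\nu$. Since $\star_B$ is a $B$-linear map between finitely generated projective $B$-modules of the same rank over a noetherian domain, to show it is an isomorphism it suffices to verify the statement after localizing at every prime of $B$.

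First I would record the functoriality: for any multiplicative set $S \subset B$ with localization $B_S$, both $\mathfrak{X}^{\bullet}$ and $\Omega^{\bullet}$ commute with localization, and $\star_{B_S}$ is obtained from $\star_B$ by base change, with volume form $\nu$ (viewed in $\Omega^g(B_S)$). Thus one may localize at any prime $\mathfrak{p} \subset B$, and further it suffices to work over the localization at a maximal ideal, where $\Omega^1$ becomes free by the standard smoothness criterion (Jacobian criterion): there exist elements $x_1, \ldots, x_g \in B_{\mathfrak{m}}$ with $dx_1, \ldots, dx_g$ a $B_{\mathfrak{m}}$-basis of $\Omega^1(B_{\mathfrak{m}})$, and dual basis $\partial_1, \ldots, \partial_g \in \mathfrak{X}^1(B_{\mathfrak{m}})$.

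Next I would carry out the computation in this local free setting. Write $\nu = u\, dx_1 \wedge \cdots \wedge dx_g$ for a unit $u \in B_{\mathfrak{m}}$. For any multi-index $I = \{i_1 < \cdots < i_k\} \subset \{1,\ldots,g\}$ with complement $I^c = \{j_1 < \cdots < j_{g-k}\}$, the generator $\partial_I := \partial_{i_1} \wedge \cdots \wedge \partial_{i_k}$ of $\mathfrak{X}^k(B_{\mathfrak{m}})$ satisfies, by \eqref{E1.0.2},
\[
\star_{B_{\mathfrak{m}}}(\partial_I) = \iota_{\partial_I} \nu = \varepsilon(I) \, u \, dx_{j_1}\wedge \cdots \wedge dx_{j_{g-k}}
\]
for a sign $\varepsilon(I) = \pm 1$. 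As $I$ ranges over $k$-subsets, these are (up to signs and a unit) the basis of $\Omega^{g-k}(B_{\mathfrak{m}})$ dual under wedge to the basis $\{dx_I\}$, so $\star_{B_{\mathfrak{m}}}$ sends a basis to a basis and is therefore an isomorphism.

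The main (minor) obstacle is the bookkeeping with signs and the verification that $\star$ commutes with the localization functor; the sign issue is routine from \eqref{E1.0.2}, and compatibility with localization is just because $\iota$ is defined in a localization-friendly way through the isomorphism \eqref{E1.0.1}. Having checked the isomorphism at every maximal localization, the original $B$-linear map $\star_B$ between finitely generated projective modules of the same rank is an isomorphism, completing the proof.
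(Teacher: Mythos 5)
Your proposal is correct and follows essentially the same route as the paper's proof: reduce to the localization at each maximal ideal (using that $\star$ commutes with localization), then in the local free setting compute that $\star$ sends each basis element $\partial_{i_1}\wedge\cdots\wedge\partial_{i_k}$ of $\mathfrak{X}^k$ to $\pm$ (unit) times the complementary basis element of $\Omega^{g-k}$, hence is an isomorphism. The only cosmetic difference is your explicit framing via projective modules of equal rank and the Jacobian criterion, where the paper invokes regularity of the local ring directly.
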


\begin{proof}
To prove that $\star$ is an isomorphism, it suffices to show
that $\star_B \otimes_B B_{\mathfrak m}$ is an isomorphism 
for all maximal ideals ${\mathfrak m}$ of $B$. Let $A$ be 
the local ring $B_{\mathfrak m}$.  Then $A$ is a regular local 
ring of global dimension, Krull dimension and transcendence 
degree $n$. Since all the operations commute with the localization,  
$$\star_B \otimes_B B_{\mathfrak m}=\star_B \otimes_B A
=\star_A.$$

Now we assume that $A$ is local with maximal ideal ${\mathfrak m}$
generated by $\{x_1,\ldots,x_n\}$. Then $\Omega^1(A)$ is a free 
module of rank $g$. Write $\Omega^1(A)=\oplus_{i=1}^n A dx_i$. 
Then for each $k\geq 0$, $\Omega^k(A)$ is a free $A$-module with 
basis
$$\{d_{i_1,\cdots,i_k}:
=d x_{i_1} \wedge \cdots \wedge d x_{i_k}\mid 1\leq i_1<
\cdots <i_k\le n\}$$ and that, via \eqref{E1.0.1}, 
${\mathfrak X}^k(A)$ is a free $A$-module with basis as 
in \eqref{E1.0.1}
$$\{\partial_{i_1,\ldots,i_k}:
={\frac{\partial \;}{\partial x_{i_1}}} \wedge \cdots 
\wedge {\frac{\partial \;}{\partial x_{i_k}}}\mid 1\leq i_1<
\cdots <i_k\leq n\}.$$ 
Recall that $\nu=a\,d_{1,2,\ldots,n}$ for an invertible
element $a\in A$. (Usually we write $\nu_0=d_{1,2,\ldots,n}$). 
By definition,
$$\begin{aligned}
\star_A \partial_{i_1,\ldots,i_k} &=
\iota_{\partial_{i_1,\ldots,i_k}} (a\nu_0)
 = a\iota_{d_{i_1,\ldots,i_k}} 
 (d_{1,2,\cdots,n})\\
&=\sum_{\sigma\in {\mathbb S}_{k,n-k}}
 sgn(\sigma) a \partial_{i_1,\ldots,i_k}
 [x_{\sigma(1)},\ldots,x_{\sigma(k)}]
 d_{1,\ldots, \widehat{\sigma(1)},\ldots, 
 \widehat{\sigma(k)},\ldots,n}\\
&=\pm a\, d_{1,\ldots, \widehat{i_1},\cdots, \widehat{i_k},\ldots,n}
\end{aligned}
$$
where $\pm 1=sgn(\{i_1,\ldots,i_k,1,\ldots, 
\widehat{i_1},\ldots, \widehat{i_k},\ldots,n\})$.
Therefore $\star_A$ is an isomorphism as desired.
\end{proof}

\begin{definition}
\label{xxdef3.2}
We say $A$ is a {\it standard} Poisson algebra if $A$ is an 
affine smooth ${\mathbb Z}$-graded Poisson domain with a 
homogeneous volume form $\nu$ (with $\deg (\nu)$ not 
necessarily zero) and $\star$ is an isomorphism. 
\end{definition}

Note that every polynomial Poisson algebra $\Bbbk[x_1,\ldots,x_n]$
is standard (even when ${\rm{char}}\; \Bbbk>0$). Lemma 
\ref{xxlem3.1} provides another class of such 
algebras. Now we assume that $A$ is standard of dimension $n$. By 
\cite[Sect. 4.4.3]{LPV}, the {\it divergence} operator with 
respect to the volume form $\nu$ is a graded $\Bbbk$-linear 
map of degree $-1$,
$$\divv: 
{\mathfrak X}^{\bullet}(A)\to {\mathfrak X}^{\bullet-1}(A),$$
which makes the following diagram commutes
$$\begin{CD}
{\mathfrak X}^{\bullet}(A)@> \star >> \Omega^{n-\bullet}(A)\\
@V \divv VV @VV d V\\
{\mathfrak X}^{\bullet-1}(A)@> \star >> \Omega^{n-\bullet+1}(A)
\end{CD}
$$

Since $A$ is standard, the star operator $\star$ is an $A$-linear
isomorphism. Now we have the following lemmas which were proved
in \cite[Sect. 4.4.3]{LPV}.

\begin{lemma} \cite[Proposition 4.16]{LPV}
\label{xxlem3.3}
Suppose $(A,\pi)$ is standard with volume form $\nu$. Let 
$\delta$ and $\phi$ be two derivations of $A$. Then
$$\divv (\delta \wedge \phi)
=\divv(\phi) \delta-\divv(\delta) \phi -[\delta,\phi].$$
\end{lemma}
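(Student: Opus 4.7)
The strategy is to translate the statement from the side of multiderivations to the side of differential forms via the star operator $\star = \iota_{(-)}\nu$ and the defining commutative diagram $\star \circ \divv = d \circ \star$. Since $A$ is standard, $\star$ is an $A$-linear isomorphism, so proving the identity on either side suffices.

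First, I would compute $\star(\delta \wedge \phi) = \iota_{\delta \wedge \phi}\nu$. Using the shuffle formula \eqref{E1.0.2} (or a direct check on generators $dF_1 \wedge dF_2$), one sees that $\iota_{\delta \wedge \phi} = \iota_\phi \iota_\delta$ as operators on $\Omega^\bullet(A)$. Hence $\star(\divv(\delta \wedge \phi)) = d(\iota_\phi \iota_\delta \nu)$. Next, by applying $\star$ to the right-hand side of the claimed identity we get
$$\divv(\phi)\,\iota_\delta \nu - \divv(\delta)\,\iota_\phi \nu - \iota_{[\delta,\phi]}\nu,$$
and the goal reduces to showing that $d(\iota_\phi \iota_\delta \nu)$ equals this expression.

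The computation proceeds by repeated application of Cartan's magic formula $\mathcal{L}_X = d\iota_X + \iota_X d$, together with the standard bracket identity $[\mathcal{L}_\phi, \iota_\delta] = \iota_{[\phi,\delta]}$. Writing
$$d(\iota_\phi \iota_\delta \nu) = \mathcal{L}_\phi(\iota_\delta \nu) - \iota_\phi\,d(\iota_\delta \nu),$$
I can handle the two terms separately. For the second, $d(\iota_\delta \nu) = \mathcal{L}_\delta \nu = \divv(\delta)\,\nu$ (using $d\nu = 0$, since $\nu$ is a top form), so $\iota_\phi\,d(\iota_\delta \nu) = \divv(\delta)\,\iota_\phi \nu$. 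For the first, using $[\mathcal{L}_\phi, \iota_\delta] = \iota_{[\phi,\delta]}$ and the definition of $\divv(\phi)$,
$$\mathcal{L}_\phi(\iota_\delta \nu) = \iota_\delta(\mathcal{L}_\phi \nu) + \iota_{[\phi,\delta]}\nu = \divv(\phi)\,\iota_\delta \nu + \iota_{[\phi,\delta]}\nu.$$
Combining the two, and using $[\phi,\delta] = -[\delta,\phi]$, the right-hand sides match, so injectivity of $\star$ finishes the argument.

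There is no real obstacle here beyond verifying the sign conventions: the only thing to be careful about is the order of factors in $\iota_{\delta \wedge \phi} = \iota_\phi \iota_\delta$ and the sign in the commutator identity $[\mathcal{L}_\phi,\iota_\delta] = \iota_{[\phi,\delta]}$. Since both identities are standard consequences of the shuffle formula \eqref{E1.0.2} and Cartan's formula \eqref{E1.0.3}, they can be cited from \cite[Sect.~3.3-3.4]{LPV} or verified directly on basis elements $dF_1 \wedge dF_2$ as above.
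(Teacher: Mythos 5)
Your proof is correct. Note that the paper itself gives no argument for this lemma: it is quoted directly from \cite[Proposition 4.16]{LPV}, so there is no in-paper proof to compare against; your Cartan-calculus derivation is essentially the standard one from \cite[Sect.\ 4.4.3]{LPV}. The two sign-sensitive points you flag do check out under the paper's conventions: with \eqref{E1.0.2} one has $\iota_{\delta\wedge\phi}=\iota_{\phi}\iota_{\delta}$ (e.g.\ on $\Bbbk[x,y]$ with $\delta=x\partial_x$, $\phi=\partial_y$, both sides of the lemma give $-\partial_y$, confirming the order), the commutator identity $[\mathcal{L}_{\phi},\iota_{\delta}]=\iota_{[\phi,\delta]}$ is exactly the $1$-derivation case of \cite[Proposition 3.11(2)]{LPV}, which the paper itself invokes in the proof of Theorem \ref{xxthm3.8}, and $d\nu=0$ holds because $\nu$ lies in the top nonzero exterior power, so the reduction via injectivity of $\star$ (guaranteed by Definition \ref{xxdef3.2}) closes the argument.
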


The following lemma gives another proof of Lemma \ref{xxlem0.1}.

\begin{lemma} \cite[Proposition 4.17]{LPV}
\label{xxlem3.4}
Suppose $(A,\pi)$ is standard with volume form $\nu$.
Let ${\mathbf m}$ be the modular derivation of $A$. Then 
$${\mathbf m}=-\divv (\pi).$$
As a consequence, the divergence of ${\mathbf m}$ is zero.
\end{lemma}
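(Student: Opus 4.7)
The plan is to work through the star isomorphism $\star : \mathfrak{X}^{\bullet}(A) \to \Omega^{n-\bullet}(A)$, which by the commutative diagram preceding the lemma intertwines the divergence operator with the de Rham differential. Since $\star$ is an $A$-linear isomorphism on a standard Poisson algebra (Definition \ref{xxdef3.2}), the asserted identity $\mathbf{m} = -\divv(\pi)$ will follow from the identity of $(n-1)$-forms $\star(\mathbf{m}) = -d(\star(\pi)) = -d(\iota_\pi \nu)$, which I will verify by pairing with an arbitrary $da$ and reducing the assertion to an equality of functions in $A$.

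First I would unfold $\mathbf{m}(a)\,\nu$: by Definition \ref{xxdef1.4}, $\mathbf{m}(a)\,\nu = -\mathcal{L}_{H_a}(\nu)$, where $H_a = \{a,-\}$ is the Hamiltonian derivation. Since $\nu$ is a top form we have $d\nu = 0$, so Cartan's formula $\mathcal{L}_{H_a} = d\,\iota_{H_a} + \iota_{H_a}\,d$ (the vector-field case of \eqref{E1.0.3}) gives $\mathbf{m}(a)\,\nu = -d(\iota_{H_a}\nu)$.

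Next I would use the tautological identity $H_a = \iota_{da}\pi$ that realizes the Hamiltonian vector field as the contraction of $da$ into the Poisson bivector. The Koszul-type supercommutator identity between contraction by a $1$-form and contraction by a bivector, combined with the observation that $da \wedge \nu = 0$ (since $\nu$ is already of top degree), reduces $\iota_{H_a}\nu$ to the contraction of $da$ against $\iota_\pi\nu = \star\pi$. Plugging this into $\mathbf{m}(a)\,\nu = -d(\iota_{H_a}\nu)$ and recognizing the right-hand side, via the commutative diagram, as the pairing of $da$ with $-d(\star\pi) = \star(-\divv(\pi))$, yields $\mathbf{m}(a)\,\nu = (-\divv(\pi))(a)\,\nu$. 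Since $a \in A$ was arbitrary and $\nu$ is a free generator of $\Omega^n(A)$, this produces $\mathbf{m} = -\divv(\pi)$.

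The consequence $\divv(\mathbf{m}) = 0$ then falls out immediately: applying the commutative diagram once more, $\star(\divv(\mathbf{m})) = d(\star\mathbf{m}) = -d^2(\star\pi) = 0$, and the injectivity of $\star$ forces $\divv(\mathbf{m}) = 0$. The main technical obstacle is bookkeeping signs in the graded-commutator identities for the interior products governed by the $(p,q)$-shuffle conventions of \eqref{E1.0.2}; once these signs are pinned down, the remaining manipulation is purely formal Cartan calculus, and all geometric content is packaged in the commutative diagram together with $d \circ d = 0$.
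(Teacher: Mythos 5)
Your argument is correct and is essentially the proof the paper outsources: Lemma \ref{xxlem3.4} is stated without proof, citing \cite[Proposition 4.17]{LPV}, and your Cartan-calculus computation (writing $H_a$ as the contraction of $da$ into $\pi$, using $da\wedge\nu=0$ and $d^2=0$, and invoking the commutative diagram $\star\circ\divv=d\circ\star$ that the paper sets up just before the lemma) is precisely the standard argument behind that citation, including the deduction $\divv(\mathbf m)=0$ from injectivity of $\star$. The only care needed is the sign bookkeeping you already flag, together with the convention $H_a=\{a,-\}$ versus $\iota_{da}\pi$; once those are pinned down, the identity $\mathbf m=-\divv(\pi)$ follows exactly as you describe.
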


\begin{question}
\label{xxque3.5}
It is not clear how to handle nonaffine smooth domain $A$ 
as the proof of Lemma \ref{xxlem3.1} uses the fact $A$ is affine.
\end{question}

\begin{theorem}
\label{xxthm3.6}
Suppose $(A,\pi)$ is standard with volume form $\nu$.
Let $\delta$ be a graded semi-Poisson derivation of $A$.
Let ${\mathbf m}$ (respectively, ${\mathbf n}$) be the 
modular derivation of $A$ (respectively $A^{\delta}$). 
Then 
$${\mathbf n}={\mathbf m}+(\divv E) \delta 
-(\divv \delta) E.$$
\end{theorem}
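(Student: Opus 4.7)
The plan is to reduce everything to the computation of $\divv(\pi_{new})$ where $\pi_{new}:=\pi+E\wedge \delta$ is the Poisson bivector of $A^{\delta}$, and then apply the Leibniz-type formula for the divergence of a wedge (Lemma \ref{xxlem3.3}).

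First I would verify that $A^{\delta}$ is itself a standard Poisson algebra, so that Lemma \ref{xxlem3.4} applies to compute its modular derivation. The underlying commutative graded algebra of $A^{\delta}$ equals that of $A$, so it is still an affine smooth graded domain with the same volume form $\nu$, and the star operator is the same isomorphism; the only thing to check is that $\pi_{new}$ really defines a Poisson bracket on $A$. This follows from Theorem \ref{xxthm2.4} applied to the twisting system $\{\delta_n:=n\delta\}_{n\in\mathbb Z}$: conditions \eqref{E2.1.1} and \eqref{E2.1.2} are automatic for this family, while \eqref{E2.1.3} is exactly $E\wedge d_{\pi}^1(\delta)=0$ by Remark \ref{xxrem2.2}(3), which is the definition of $\delta$ being semi-Poisson.

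Next I would apply Lemma \ref{xxlem3.4} twice to obtain
\[
{\mathbf m}=-\divv(\pi), \qquad {\mathbf n}=-\divv(\pi_{new})=-\divv(\pi)-\divv(E\wedge \delta),
\]
using that $\divv$ is $\Bbbk$-linear (immediate from its definition via the star operator and the de Rham differential). Hence
\[
{\mathbf n}-{\mathbf m}=-\divv(E\wedge \delta).
\]
Now I would apply Lemma \ref{xxlem3.3} to the pair $(E,\delta)$ of derivations, which gives
\[
\divv(E\wedge \delta)=\divv(\delta)\,E-\divv(E)\,\delta-[E,\delta].
\]
Combining the two displayed formulas yields
\[
{\mathbf n}={\mathbf m}+\divv(E)\,\delta-\divv(\delta)\,E+[E,\delta].
\]

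Finally I would kill the bracket term: since $\delta$ is graded of degree $0$, for every homogeneous $a\in A$ one has $|\delta(a)|=|a|$ whenever $\delta(a)\neq 0$, so
\[
[E,\delta](a)=E(\delta(a))-\delta(E(a))=|a|\delta(a)-|a|\delta(a)=0.
\]
Thus $[E,\delta]=0$ and the theorem follows. I do not expect any serious obstacle; the mild subtlety is simply checking that the semi-Poisson hypothesis on $\delta$ is exactly what is needed for $A^{\delta}$ to be a Poisson algebra (and hence standard), so that Lemma \ref{xxlem3.4} is available on the twisted side.
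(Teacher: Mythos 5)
Your proposal is correct and follows essentially the same route as the paper's proof: identify $\pi'=\pi+E\wedge\delta$ via \eqref{E2.3.1}, apply Lemma \ref{xxlem3.4} to both $A$ and $A^{\delta}$ together with Lemma \ref{xxlem3.3} for $\divv(E\wedge\delta)$, and kill the bracket term using $[E,\delta]=0$ for a graded degree-zero derivation. The only addition is your explicit check (via Theorem \ref{xxthm2.4} and Remark \ref{xxrem2.2}(3)) that the semi-Poisson hypothesis makes $A^{\delta}$ a genuine (standard) Poisson algebra, which the paper leaves implicit; this is a harmless and correct refinement.
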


\begin{proof} Let $\pi'$ be the Poisson structure of 
$A^{\delta}$. By \eqref{E2.3.1},
$$\pi'=\pi+E\wedge \delta.$$
By Lemmas \ref{xxlem3.3} and \ref{xxlem3.4}, we have 
$$\begin{aligned}
{\mathbf n}&=-\divv (\pi')=
-\divv(\pi)-\divv (E\wedge \delta)\\
&={\mathbf m} +\divv(E)\delta -\divv(\delta)E -[\delta,E].
\end{aligned}
$$
The assertion follows as $[\delta,E]=0$ for each graded 
derivation $\delta$.
\end{proof}

\begin{proof}[Proof of Theorem \ref{xxthm0.2}]
Let $A$ be a ${\mathbb Z}$-graded Poisson algebra 
$\Bbbk[x_1,\ldots,x_n]$ with each $x_i$ homogeneous.  By 
Lemma \ref{xxlem1.2}(5), $\divv(E)=\deg (\nu)=
\sum_{i=1}^n \deg(x_i)=:{\mathfrak l}$. Now the 
assertions follow from Theorem \ref{xxthm3.6}.
\end{proof}

\begin{remark}
\label{xxrem3.7}
There is a different proof of Theorem \ref{xxthm0.2} 
without using Theorem \ref{xxthm3.6} (details are 
omitted). In fact, the different proof does not
use the hypothesis that ${\text{char}}\; \Bbbk=0$.
\end{remark}

The following result is an immediate consequence 
of Theorem \ref{xxthm3.6}.

\begin{theorem}
\label{xxthm3.8}
Suppose $(A,\pi)$ is standard with volume $d$-form $\nu$. 
Assume that $\divv(E)\in \Bbbk$ is nonzero. Let 
${\mathbf m}$ be the modular derivation of $A$ and let 
$\delta=-{\frac{1}{\divv(E)}} {\mathbf m}$. Then 
$(A^{\delta}, \pi')$ is unimodular and 
$$\pi=\pi'+\frac{1}{\divv(E)}\, E\wedge {\mathbf m.}$$
In particular, we have $\mathcal L_{\delta}(\alpha)=0$ 
where $\alpha=\star \pi'$ is the closed differential 
$(d-2)$-form associated with the unimodular Poisson 
structure $\pi'$ on $A$ .
\end{theorem}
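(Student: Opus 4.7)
The plan is to deduce Theorem \ref{xxthm3.8} as a direct application of Theorem \ref{xxthm3.6}. First I would verify that $\delta := -\frac{1}{\divv(E)}{\mathbf m}$ is a graded (degree $0$) semi-Poisson derivation of $A$, so that Theorem \ref{xxthm3.6} applies. This follows because ${\mathbf m}$ is a Poisson derivation of $A$ (a classical fact, which in particular implies semi-Poisson), and scalar multiples preserve this property. Now applying Theorem \ref{xxthm3.6} gives
$${\mathbf n} = {\mathbf m} + (\divv E)\,\delta - (\divv \delta)\, E.$$
Substituting $\delta = -\frac{1}{\divv(E)}{\mathbf m}$, the first two terms cancel, and the third vanishes because $\divv({\mathbf m})=0$ by Lemma \ref{xxlem0.1} (equivalently Lemma \ref{xxlem3.4}) forces $\divv(\delta)=0$. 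Hence ${\mathbf n}=0$, i.e., $(A^{\delta},\pi')$ is unimodular. The identity $\pi = \pi' + \frac{1}{\divv(E)}\,E\wedge {\mathbf m}$ then follows immediately from the twist formula $\pi' = \pi + E\wedge \delta$ (see \eqref{E2.3.1}) after solving for $\pi$.

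For the ``in particular'' statement, I would first note that $\alpha = \star \pi'$ is closed: by the commutative diagram defining the divergence on multivectors (preceding Lemma \ref{xxlem3.3}), $d\alpha = d(\star \pi') = \star(\divv \pi') = \star(-{\mathbf n}) = 0$. To compute $\mathcal{L}_{\delta}(\alpha)$, I would use the standard Cartan-calculus commutation identity $[\mathcal{L}_{\delta}, \iota_{\pi'}] = \iota_{[\delta,\pi']_S}$ for a vector field $\delta$ and a bivector $\pi'$, which yields
$$\mathcal{L}_{\delta}(\alpha) \;=\; \mathcal{L}_{\delta}(\iota_{\pi'}\nu) \;=\; \iota_{[\delta,\pi']_S}\,\nu + \iota_{\pi'}\,\mathcal{L}_{\delta}(\nu).$$
The second term vanishes since $\mathcal{L}_{\delta}(\nu) = (\divv \delta)\,\nu = 0$. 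For the first term, it suffices to show that $\delta$ is a Poisson derivation of $(A,\pi')$. Expanding $[\delta,\pi']_S = [\delta,\pi]_S + [\delta, E\wedge \delta]_S$, the first summand is $0$ because $\delta$ is a scalar multiple of the Poisson derivation ${\mathbf m}$, and the graded Leibniz rule for the Schouten bracket gives
$$[\delta, E\wedge \delta]_S \;=\; [\delta,E]_S\wedge \delta + E\wedge [\delta,\delta]_S \;=\; 0,$$
since $[\delta,E]_S = 0$ (any graded degree-$0$ derivation commutes with $E$) and $[\delta,\delta]_S = 0$ (graded antisymmetry for vector fields).

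The main potential obstacle is purely bookkeeping: verifying the sign conventions in the graded Leibniz rule for the Schouten bracket and in the identity $[\mathcal{L}_{\delta}, \iota_{\pi'}] = \iota_{[\delta,\pi']_S}$. Once those are in hand, the whole theorem reduces to substitution into Theorem \ref{xxthm3.6} together with a standard Cartan-calculus computation.
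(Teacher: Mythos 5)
Your proposal is correct and takes essentially the same route as the paper's proof: apply Theorem \ref{xxthm3.6} to $\delta=-\frac{1}{\divv(E)}\,{\mathbf m}$, use $\divv({\mathbf m})=0$ to conclude ${\mathbf n}=0$, obtain the decomposition from $\pi'=\pi+E\wedge\delta$ as in \eqref{E2.3.1}, and derive $\mathcal{L}_{\delta}(\alpha)=0$ from the Lie-derivative/internal-product identity together with $[\delta,\pi']_S=0$. The only difference is that you make explicit some details the paper leaves implicit (that $\delta$ is a graded semi-Poisson derivation so Theorem \ref{xxthm3.6} applies, the closedness of $\alpha$, and the Leibniz-rule verification that $[\delta,\pi']_S=0$), which is harmless.
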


\begin{proof} 
Let ${\mathbf n}$ be the modular derivation of 
$(A^{\delta},\pi')$. By Theorem \ref{xxthm3.6} and the fact
$\delta=-\frac{1}{\divv(E)} {\mathbf m}$, 
$${\mathbf n}={\mathbf m}+\divv(E) \delta -(\divv \delta) E
=-(\divv \delta) E=0$$
where the last equation follows from $\divv {\mathbf m}=0$
[Lemma \ref{xxlem0.1}]. Therefore $(A^{\delta}, \pi')$ is 
unimodular. By \eqref{E2.3.1}, for all $a,b\in A$,
$$\begin{aligned}
\pi'(a,b)&=\langle a,b\rangle
=\{a,b\}+a\delta_{|a|}(b)-b\delta_{|b|}(a)\\
&=\pi(a,b)+|a|a \delta(b)-|b|b\delta(a)
=\pi(a,b)+E(a)\delta(b)-\delta(a)E(b)
\end{aligned}
$$
which implies that
$$\pi'=\pi+E\wedge \delta$$
which is equivalent to the above assertion. Finally 
by \cite[Proposition 3.11(2)]{LPV} we have
\begin{align*}
\mathcal L_\delta(\alpha)=
&\mathcal L_\delta(\iota_{\pi'}(\nu))
=\iota_{\pi'}(\mathcal L_\delta(\nu))+\iota_{[\delta,\pi']_S}(\nu)\\
=&\iota_{\pi'}(\divv (\delta)\nu)+\iota_{[\delta,\pi']_S}(\nu)
=\iota_{[\delta,\pi']_S}(\nu)
\end{align*}
One can easily check that $\delta$ is also a Poisson 
derivation of $(A^\delta,\pi')$. So 
$[\delta,\pi']_S=-d_{\pi'}(\delta)=0$ and we get 
$\mathcal L_\delta(\alpha)=0$.
\end{proof}

\begin{proof}[Proof of Corollary \ref{xxcor0.3}]
Let $A$ be a ${\mathbb Z}$-graded Poisson algebra 
$\Bbbk[x_1,\cdots,x_n]$ with each $x_i$ homogeneous.  
By lemma \ref{xxlem1.2}(5),
${\mathfrak l}=\sum_{i=1}^n \deg(x_i)$. Now the 
assertions follow from Theorem \ref{xxthm3.8}.
\end{proof}

\section{Rigidity of graded twisting}
\label{xxsec4}
Let $A$ be a ${\mathbb Z}$-graded Poisson algebra. Recall
that the set of graded semi-Poisson derivations (resp.
graded Poisson derivations) of $A$ of degree 0 is denoted
by $Gspd(A)$ (resp. $Gpd(A)$). We first prove Theorem 
\ref{xxthm0.5}.

\begin{lemma}
\label{xxlem4.1}
Let $A$ be a ${\mathbb Z}$-graded Poisson algebra
$\Bbbk[x_1,\ldots,x_n]$ with $\deg (x_i)>0$ for every $i$. 
Suppose that $A$ is unimodular and that  
${\mathfrak l}:=\sum_{i=1}^n \deg(x_i)$ is a nonzero 
element in $\Bbbk$. If $\delta$ is a semi-Poisson 
derivation of $A$, then $\delta$ is a Poisson derivation
of $A$. Namely, $Gspd(A)=Gpd(A)$. 
\end{lemma}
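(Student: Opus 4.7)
The strategy is to exploit the hypothesis $\mathfrak l\neq 0$ to normalize the divergence of $\delta$ to zero, then apply Theorem \ref{xxthm3.6} to identify the modular derivation of the twist $A^\delta$ with a nonzero scalar multiple of $\delta$; since modular derivations are automatically Poisson, this forces $\delta$ itself to be Poisson.

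First I would reduce to the case $\divv(\delta)=0$. Observe that the Euler derivation $E$ is Poisson: indeed, $E(\{a,b\})=(|a|+|b|)\{a,b\}=\{Ea,b\}+\{a,Eb\}$ on homogeneous elements, so $E\in Gpd(A)\subseteq Gspd(A)$. The expression $p(\{-,-\},\delta;a,b,c)$ defined in \eqref{E0.3.1} is $\Bbbk$-linear in $\delta$, hence $Gspd(A)$ and $Gpd(A)$ are $\Bbbk$-subspaces of $\Der(A)$ containing $E$. By Lemma \ref{xxlem1.2}(3), $\divv(\delta)\in\Bbbk$, so one may form $\delta':=\delta-\frac{\divv(\delta)}{\mathfrak l}E$, which is still semi-Poisson and satisfies $\divv(\delta')=0$. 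Since $\delta-\delta'\in\Bbbk E\subseteq Gpd(A)$, we have $\delta\in Gpd(A)$ if and only if $\delta'\in Gpd(A)$. So I may assume $\divv(\delta)=0$ from the outset.

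Next, using $\divv(E)=\mathfrak l$ (Lemma \ref{xxlem1.2}(5)) and the unimodularity $\mathbf m=0$, Theorem \ref{xxthm3.6} applied to the semi-Poisson derivation $\delta$ gives that the modular derivation of $A^\delta$ is
\[
\mathbf n=\mathbf m+\mathfrak l\delta-\divv(\delta)\,E=\mathfrak l\delta.
\]
It is a standard fact (see e.g.\ \cite[Proposition 4.18]{LPV}) that the modular derivation of any Poisson algebra is itself a Poisson derivation of that algebra; applied to $A^\delta$, this says $\mathfrak l\delta$ is Poisson with respect to the twisted bracket $\pi':=\pi+E\wedge\delta$, and dividing by $\mathfrak l\neq 0$, so is $\delta$.

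Finally, a short Schouten calculation transfers Poissonness from $\pi'$ back to $\pi$. Using $[E,\delta]=0$ (both graded of degree $0$) and $[\delta,\delta]_S=0$,
\[
d^1_{\pi'}(\delta)=-[\delta,\pi+E\wedge\delta]_S=-[\delta,\pi]_S-[\delta,E]_S\wedge\delta-E\wedge[\delta,\delta]_S=d^1_\pi(\delta),
\]
so $d^1_\pi(\delta)=d^1_{\pi'}(\delta)=0$ and $\delta\in Gpd(A)$. The only nonformal input is the Poissonness of the modular derivation — the main step where an external result is invoked — and this is the one place I anticipate a reader might want a direct verification (which, if desired, follows in a few lines from $\mathbf m(a)=-\divv H_a$, the Jacobi identity $H_{\{a,b\}}=[H_a,H_b]$, and the identity $\divv[X,Y]=X(\divv Y)-Y(\divv X)$). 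The remaining steps are bookkeeping assembled directly from Section \ref{xxsec3} and Lemma \ref{xxlem1.2}.
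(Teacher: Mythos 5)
Your proposal is correct and follows essentially the same route as the paper: twist $A$ by the semi-Poisson derivation $\delta$, use the modular-derivation formula of Theorem \ref{xxthm3.6}/\ref{xxthm0.2} together with unimodularity and the fact that modular derivations (and $E$) are Poisson to conclude that $\delta$ is Poisson for the twisted bracket, then transfer back to $\pi$. The only differences are cosmetic: you pre-normalize to $\divv(\delta)=0$ by subtracting a multiple of $E$ (the paper instead absorbs the term $\divv(\delta)E$ afterwards), and you carry out the transfer-back step via the Schouten identity $[\delta,\pi']_S=[\delta,\pi]_S$ rather than the paper's direct element-wise expansion, both of which are valid.
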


\begin{proof}
Since $\deg (x_i)>0$ for all $i$, by Lemma \ref{xxlem1.2}(1,3), 
both $\divv(\delta)$ and ${\mathfrak l}=\divv(E)$ are in $\Bbbk$.
Let $B$ be the twist $A^{\delta}$ with modular derivation
${\mathbf n}$.  By Theorem \ref{xxthm0.2},
$${\mathbf n}={\mathbf m}+{\mathfrak l} \delta-\divv(\delta) E
={\mathfrak l} \delta-\divv(\delta) E.$$
Since ${\mathbf n}$ and $E$ (and $\divv(\delta) E$) are Poisson 
derivations of $B$, we have that ${\mathfrak l} \delta$ (and hence 
$\delta$) is a Poisson derivation of 
$B$. Let $\langle-,-\rangle'$ (respectively,
$\{-,-\}$) be the Poisson structure of $B$ (respectively, $A$).
By \eqref{E2.3.1}, we have
$$\{-,-\}=\langle -,-\rangle-E\wedge \delta.$$
Then, for all homogeneous elements $a,b\in A$, 
$$\begin{aligned}
\delta(\{a,b\})-&\{\delta(a),b\}-\{a,\delta(b)\}\\
&=\delta(\langle a,b\rangle)-
  \langle\delta(a),b\rangle-\langle a,\delta(b)\rangle\\
&\quad
 -\delta((E\wedge \delta)[a,b])+E\wedge 
 \delta[\delta(a),b]+E\wedge \delta[a,\delta(b)]\\
&=0-\delta(|a|a\delta(b)-|b|b\delta(a))\\
&\quad +(|a|\delta(a)\delta(b)-|b|b\delta^2(a))
+(|a|a\delta^2(b)-|b|\delta(b)\delta(a))\\
&=-|a|\delta(a)\delta(b)-|a|a\delta^2(b)
 +|b|\delta(b)\delta(a)+|b|b\delta^2(a)\\
&\quad +(|a|\delta(a)\delta(b)-|b|b\delta^2(a))
+(|a|a\delta^2(b)-|b|\delta(b)\delta(a))\\
&=0.
\end{aligned}
$$
Therefore $\delta$ is a Poisson derivation of $A$.
\end{proof}

\begin{lemma}
\label{xxlem4.2}
Let $B$ be a twist of $A$. Then $Gspd(A)=Gspd(B)$.
\end{lemma}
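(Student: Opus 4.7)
The plan is to translate the condition for being semi-Poisson into the language of the Schouten bracket and then do a short computation. By Remark \ref{xxrem2.2}(3), a graded degree-zero derivation $\delta$ lies in $Gspd(A)$ if and only if $E\wedge d_{\pi}^1(\delta)=0$, and the underlying graded algebra of $B$ coincides with that of $A$, so $Gspd(A)$ and $Gspd(B)$ consist of the same derivations of degree $0$; only the semi-Poisson condition depends on the bracket. Writing $B=A^\sigma$ with $\sigma$ a Poisson twisting system, I will use $\pi'=\pi+E\wedge \sigma$ (where we identify $\sigma$ with $\sigma_1$ in the ${\mathbb Z}$-graded case) and show $E\wedge d_{\pi'}^1(\delta)=E\wedge d_\pi^1(\delta)$.

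Recalling $d_\pi(\cdot)=-[\cdot,\pi]_S$ and the fact that $({\mathfrak X}^\bullet(A),\wedge,[\cdot,\cdot]_S)$ is a Gerstenhaber algebra, the Leibniz rule gives
\[
d_{\pi'}^1(\delta)=-[\delta,\pi]_S-[\delta,E\wedge \sigma]_S
=d_\pi^1(\delta)-[\delta,E]\wedge \sigma - E\wedge [\delta,\sigma],
\]
where no sign arises because all three of $\delta,E,\sigma$ are $1$-vectors. Since $\delta$ is graded of degree $0$, a direct check on homogeneous elements yields $[\delta,E]=\delta E-E\delta=0$, leaving $d_{\pi'}^1(\delta)=d_\pi^1(\delta)-E\wedge [\delta,\sigma]$.

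Now wedge with $E$. Since $E$ is a $1$-vector (odd polyvector degree), $E\wedge E=0$, so the second term vanishes and
\[
E\wedge d_{\pi'}^1(\delta)=E\wedge d_\pi^1(\delta).
\]
Hence $\delta$ is semi-Poisson with respect to $\pi$ if and only if it is semi-Poisson with respect to $\pi'$, proving $Gspd(A)=Gspd(B)$.

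The only real care point is getting the sign conventions for the Schouten bracket right when applying the Leibniz rule, and noting that $\sigma$ is only semi-Poisson (not necessarily Poisson), which is harmless: the computation above only uses bilinearity of $[\cdot,\cdot]_S$, the Leibniz rule, $[\delta,E]=0$, and $E\wedge E=0$, none of which require $\sigma$ to be Poisson. I expect no serious obstacle beyond this bookkeeping.
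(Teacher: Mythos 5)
Your proof is correct and follows essentially the same route as the paper: the paper characterizes semi-Poissonness via the Schouten bracket (there as $[E\wedge\phi,\pi]_S=0$, equivalent to your $E\wedge d_{\pi}^1(\phi)=0$ by Remark \ref{xxrem2.2}(3)) and kills the extra term $[E\wedge\phi,E\wedge\delta]_S$ by the graded Leibniz rule together with $[E,\phi]=0$, which is the same cancellation you obtain from $[\delta,E]=0$ and $E\wedge E=0$. No gaps; your version even gives both inclusions at once.
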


\begin{proof}
Write $B=A^{\delta}$. So $B=A$ as a commutative algebra.
Let $\pi$ (respectively, $\pi'$) be the Poisson bracket of
$A$ (respectively, $B$). 

For every derivation $\phi$ of $A$, $[E,\phi]_{S}
=[E,\phi]=0$. For any two derivations $\phi_1,\phi_2$,
we have
$$\begin{aligned}
\; [E\wedge \phi_1,E\wedge \phi_2]_S
&=\pm [E, E\wedge \phi_2]_S\wedge \phi_1
\pm E\wedge [\phi_1, E\wedge \phi_2]_S\\
&=\pm ([E,E]_S\wedge \phi_2\pm [E,\phi_2]_S\wedge E)\wedge \phi_1\\
&\qquad \pm E\wedge ([\phi_1, E]_S\wedge \phi_2\pm [\phi_1,\phi_2]_S\wedge E)\\
&=0.
\end{aligned}
$$

Let $\phi$ be a semi-Poisson derivation of $A$. By definition,
$$[E\wedge \phi, \pi]_{S}=0.$$
Then 
$$\begin{aligned}
\; [E\wedge \phi, \pi']_{S}
&=[E\wedge \phi, \pi+E\wedge \delta]_S\\
&=[E\wedge \phi, \pi]_S+[E\wedge \phi, E\wedge \delta]_S\\
&=0.
\end{aligned}
$$ 
Therefore $\phi$ is a semi-Poisson derivation of $B$. 
\end{proof}

\begin{proof}[Proof of Theorem \ref{xxthm0.5}]
Part (1) is Lemma \ref{xxlem4.1} and part (2) is Lemma 
\ref{xxlem4.2}.

(3) By Corollary \ref{xxcor0.3} and part (2), we may assume
that $A$ is unimodular. By part (1), $Gspd(A)$ is the 
$\Bbbk$-vector space of graded Poisson derivations $A$. It is
well-known that it is a Lie algebra. Let $\phi$ be any 
Poisson derivation of $A$. It is clear that $\phi$ is determined by
$\{\phi(x_i)\}_{i=1}^n$. Therefore $Gspd(A)$ is finite dimensional.
\end{proof}

One of the main definitions in this paper is the following.

\begin{definition}
\label{xxdef4.3}
Let $A$ be a ${\mathbb Z}$-graded Poisson algebra. 
\begin{enumerate}
\item[(1)]
The {\it rigidity of graded twisting} (or simply {\it rigidity}) 
of $A$ is defined to be
$$rgt(A)=1-\dim_{\Bbbk} Gspd(A).$$
\item[(2)]
We say $A$ is {\it rigid} if $rgt(A)=0$.
\item[(3)]
We say $A$ is {\it $(-1)$-rigid} if $rgt(A)=-1$.
\end{enumerate}
\end{definition}

Note that this notion of rigidity is different from
the rigidity defined in \cite[Definition 0.1]{GVW} 
and other papers.

It follows from Lemma \ref{xxlem4.2} that
\begin{equation}
\label{E4.3.1}\tag{E4.3.1}
rgt(A)=rgt(A^{\delta}).
\end{equation}

Other basic facts about $rgt(A)$ are listed in the following lemma.

\begin{lemma}
\label{xxlem4.4}
Let $A$ be a ${\mathbb Z}$-graded Poisson algebra with $A_i\neq 0$
for some $i\neq 0$. In parts {\rm{(2)-(6)}}, we further assume that 
$A$ is $\Bbbk[x_1,\ldots,x_n]$ with $\deg x_i>0$ for all $i$. 
\begin{enumerate}
\item[(1)]
Suppose that $rgt(A)=0$. Then every graded twist of $A$ is isomorphic 
to $A$.
\item[(2)]
If $A$ is not unimodular, then $rgt(A)\leq -1$.
\item[(3)]
If $rgt(A)=0$, then $A$ is unimodular.
\item[(4)]
If $rgt(A)\neq 0$, then $\dim_{\Bbbk} Gspd(A) \geq \dim_{\Bbbk} Gpd(A)\geq 2$.
\item[(5)]
If $rgt(A)=-1$, then $\dim_{\Bbbk} Gspd(A)= \dim_{\Bbbk} Gpd(A)=2$.
\item[(6)]
Let $\{A(a)\}_{a\in \Bbbk}$ be a family of Poisson polynomial algebras such that
{\rm{(i)}} $A(a)$ is a Poisson twist of $A(a')$ for all $a,a'\in \Bbbk$ and that
{\rm{(ii)}} there is an $a_0$ such that $A(a_0)$ is unimodular. If $\dim_{\Bbbk}
Gpd(A(a_0))=2$, then $rgt(A(a))=-1$ for all $a$.
\item[(7)]
If $rgt(A)=0$ and $A$ is a connected graded domain, then every 
Poisson normal element of $A$ is Poisson central.
\item[(8)] If $\dim_\Bbbk Gpd(A)=1$ and $\divv(E)\neq 0$, then 
$rgt(A)=0$. 
\end{enumerate}
\end{lemma}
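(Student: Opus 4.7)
The plan is to show that the hypotheses force $A$ to be unimodular, and then to invoke the unimodular case (Theorem \ref{xxthm0.5}(1)) to compute $Gspd(A)$.

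First I would observe that the Euler derivation $E$ is automatically a graded Poisson derivation of degree $0$, so $\Bbbk E \subseteq Gpd(A)$; combined with the hypothesis $\dim_\Bbbk Gpd(A)=1$, this forces $Gpd(A)=\Bbbk E$.

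Next I would use the standard fact that the modular derivation $\mathbf{m}$ is itself a graded Poisson derivation of degree $0$ (equivalent to $\mathcal{L}_{\mathbf{m}}\pi=0$; this is the statement already used implicitly in the proof of part~(2), where $\mathbf{m}$ and $E$ are taken to produce two linearly independent Poisson derivations in the non-unimodular case). Thus $\mathbf{m}\in Gpd(A)=\Bbbk E$, so $\mathbf{m}=cE$ for some $c\in\Bbbk$. Applying $\divv$ to both sides and invoking Lemma~\ref{xxlem0.1} gives
$$0\;=\;\divv(\mathbf{m})\;=\;c\,\divv(E).$$
Since $\divv(E)\neq 0$ by hypothesis, we conclude $c=0$, hence $\mathbf{m}=0$ and $A$ is unimodular.

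Finally, by Theorem~\ref{xxthm0.5}(1), in the unimodular case we have $Gspd(A)=Gpd(A)=\Bbbk E$, so $\dim_\Bbbk Gspd(A)=1$ and consequently $rgt(A)=1-\dim_\Bbbk Gspd(A)=0$. The only subtle input is the fact that $\mathbf{m}\in Gpd(A)$; everything else is a direct synthesis of Lemma~\ref{xxlem0.1} and Theorem~\ref{xxthm0.5}(1), so no genuine obstacle arises.
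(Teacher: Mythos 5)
Your proposal treats only part (8) of the lemma; the statement you were given is the full eight-part Lemma \ref{xxlem4.4}, and parts (1)--(7) (the isomorphism of twists under $rgt(A)=0$, the non-unimodular bound $rgt(A)\leq -1$, the dimension estimates for $Gspd$ and $Gpd$, the family argument in (6), and the Poisson-normal-implies-central statement in (7)) are not addressed at all. That omission is the only real gap.

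For part (8) itself your argument is correct and is essentially the paper's: the paper likewise observes that $Gpd(A)$ is spanned by $E$ and that $E$ cannot account for the modular derivation because $\divv({\mathbf m})=0$ (Lemma \ref{xxlem0.1}) while $\divv(E)\neq 0$, concludes unimodularity, and then passes to $Gspd(A)=Gpd(A)$ in the unimodular case. Your explicit step --- ${\mathbf m}\in Gpd(A)=\Bbbk E$, so ${\mathbf m}=cE$ and $0=\divv({\mathbf m})=c\,\divv(E)$ forces $c=0$ --- is exactly the right way to justify the paper's terse ``this implies that $A$ is unimodular,'' since one needs to rule out every nonzero scalar multiple of $E$, not merely $E$ itself. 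Your final appeal to Theorem \ref{xxthm0.5}(1) (i.e.\ Lemma \ref{xxlem4.1}) is also the correct reference; the paper's citation of Lemma \ref{xxlem4.2} at that point appears to be a slip, as that lemma concerns invariance of $Gspd$ under twisting rather than the equality $Gspd(A)=Gpd(A)$ in the unimodular case. Note that, as in the paper, the conclusion of (8) implicitly uses the polynomial hypothesis $A=\Bbbk[x_1,\ldots,x_n]$ with $\deg x_i>0$ (needed both for Lemma \ref{xxlem0.1} in the form used and for Lemma \ref{xxlem4.1}), so it is worth stating that you are working in that setting.
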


\begin{proof}
(1) Since $A_i\neq 0$ for some $i$, the Euler derivation $E$ is not zero.
Since $rgt(A)=0$, $Gspd(A)=\Bbbk E$. Let $B$ be a graded twist of $A$. Then 
$B=A^{\delta}$ where $\delta\in Gspd(A)$. Let $\langle -,-\rangle$ be the 
Poisson bracket of $B$. Since $\delta=\alpha E$ for some $\alpha\in \Bbbk$, 
one sees from \eqref{E2.3.1} that $\langle a,b\rangle=\{a,b\}$ where
$\{a,b\}$ is the original Poisson bracket of $A$. The assertion follows.

(2) Since $A$ is not unimodular, the modular derivation ${\mathbf m}$ 
is not in $\Bbbk E$, as $\divv (E)=\sum_{i=1}^n \deg x_i\neq 0$ and 
$\divv({\mathbf m})=0$ [Lemma \ref{xxlem0.1}]. Therefore 
$\dim_\Bbbk Gspd(A)\geq 2$. The assertion follows.

(3) This is equivalent to (2).

(4) By definition, it is clear that $ \dim_{\Bbbk} Gspd(A) \geq 
\dim_{\Bbbk} Gpd(A)$. It remains to show $\dim_{\Bbbk} Gpd(A)
\geq 2$. If $A$ is unimodular, then, by Lemma \ref{xxlem4.1},
$$\dim_{\Bbbk} Gpd(A)=\dim_{\Bbbk} Gspd(A)=1-rgt(A)\geq 2.$$
Now we assume that $A$ is not unimodular with nonzero 
modular derivation ${\mathbf m}$. Since $\divv(E)\neq 0$ and
$\divv({\mathbf m})=0$, the $\Bbbk$-dimension of $Gpd(A)$ is
at least 2 as desired.

(5) By definition, $\dim_{\Bbbk} Gspd(A)=1-rgt(A)=2$. The assertion
follows from part (4).

(6) It follows from Lemmas \ref{xxlem4.1} and \ref{xxlem4.2} that we have 
$$rgt(A(a))=rgt(A(a_0))=1-\dim_{\Bbbk} Gspd(A(a_0))
=1-\dim_{\Bbbk} Gpd(A(a_0))=-1.$$
The assertion follows from (4).

(7) We only need to consider a homogeneous Poisson normal
element $f$ of positive degree. Note the log-Hamiltonian 
derivation $LH_f:=f^{-1}\{f,-\}$ is a Poisson derivation of 
degree $0$. Suppose $f$ is not central. Then $LH_f(f)\neq 0$. 
Note that $LH_f$ is clearly not the Euler derivation. Therefore 
$rgt(A)\leq -1$, yielding a contradiction. The assertion follows.

(8) We know $Gpd(A)$ is spanned by the Euler derivation $E$, 
which is not the modular derivation by Lemma \ref{xxlem0.1}. 
This implies that $A$ is unimodular and the result follows by 
Lemma \ref{xxlem4.2}.
\end{proof}

Examples of $rgt(A)$ will be given in the next 2 sections.

\section{Examples and comments}
\label{xxsec5}

Note that the graded twists in the associative algebra setting 
has an important property, namely, a graded algebra $R$ and 
its twist have isomorphic corresponding graded module categories 
\cite{Zh}. So we are wondering if a similar result holds in the 
Poisson setting. The following example shows that this is not the 
case. 

\begin{example}
\label{xxex5.1}
Let $A$ be the Poisson algebra $\Bbbk[x_1,x_2]$ with trivial
Poisson structure. Let $\delta$ be the Poisson derivation of
$A$ determined by
$$\delta(x_1)=0, \quad {\text{and}} \quad \delta(x_2)=x_2.$$
Let $B$ be the graded twist of $A$ by $\delta$, namely, 
$B=A^{\delta}$. By definition, $B$ is a Poisson algebra
$\Bbbk[x_1,x_2]$ with Poisson bracket determined by
$$\{x_1,x_2\}=x_1x_2.$$

Let $U(A)$ denote the Poisson enveloping algebra of $A$
\cite{Ba3}. Since $A$ has the trivial Poisson structure, $U(A)$ 
is the commutative polynomial ring $\Bbbk[x_1,x_2,y_1,y_2]$.
Let $U(B)$ be the Poisson enveloping algebra of $B$. We claim 
that $U(B)$ is not a graded twist of $U(A)$ in the sense of
\cite{Zh}. 

Suppose on the contrary that $U(B)$ is a graded twist of $U(A)$ 
in the sense of \cite{Zh}. Then, by \cite[Theorem 1.1]{Zh}, 
\begin{equation}
\label{E5.1.1}\tag{E5.1.1}
{\text{GrMod-}}U(A)\cong {\text{GrMod-}}U(B).
\end{equation}
Let $D(A)$ (respectively, $D(B)$) be the degree zero part of 
graded quotient ring of $U(A)$ (respectively, $U(B)$). Then 
it follows from \eqref{E5.1.1} that $D(A)\cong D(B)$. Since 
$U(A)$ is commutative, $D(A)$ is commutative. Thus $D(B)$ is 
commutative. Next we prove that $D(B)$ is not commutative, so 
we obtain a contradiction. By \cite[Theorem 2.2]{Ba3}, $U(B)$ 
is generated by four elements $x_1,x_2, \delta_1, \delta_2$ 
and subject to 6 relations
$$\begin{aligned}
x_1 x_2&=x_2x_1,\\
\delta_1 x_1&=x_1 \delta_1,\\
\delta_1 x_2&=x_2 \delta_1+x_1x_2,\\
\delta_2 x_1&=x_1 \delta_2-x_1x_2,\\
\delta_2 x_2&=x_2 \delta_2,\\
\delta_2 \delta_1&=\delta_1 \delta_2+x_2\delta_1+x_1\delta_2.
\end{aligned}
$$
Let $a=x_1x_2^{-1}$ and $b=\delta_2 x_2^{-1}$ which are 
elements in $D(B)$. It follows from the six relations that 
$$ba=ab-a.$$ 
So $D(B)$ is not commutative, yielding a contradiction.
Therefore $U(B)$ is not a graded twist of $U(A)$.

As a consequence, the category of graded Poisson modules over 
$A^{\delta}$, denoted by ${\text{GrPMod-}}A^{\delta}$ is not 
equivalent to the category of graded Poisson modules over 
$A$, denoted by ${\text{GrPMod-}}A$. That is,
\[
{\text{GrPMod-}}A^{\delta}\not\cong {\text{GrPMod-}}A.
\]
\end{example}

\begin{remark}
\label{xxrem5.2}
When $A$ is a connected graded Poisson algebra with 
$A_{i}\neq 0$ for some $i>0$. Then $PH^1(A)$ is also graded. 
Since $(PH^1(A))_{0}\cong Gpd(A)$, we have 
$rgt(A)\leq 1-\dim_{\Bbbk} (PH^1(A))_{0}$. If 
$\dim_{\Bbbk}(PH^1(A))_{0}=1$, then $rgt(A)=0$ by Lemma \ref{xxlem4.4}(8).  
Therefore we can obtain information about $rgt(A)$ if we know
$PH^1(A)$.
\end{remark}

\begin{remark}
\label{xxrem5.3}
Let $A$ be the quadratic Poisson algebra corresponding to the
$4$-dimensional Sklyanin algebra. The Poisson (co)homologies 
of $A$ have been computed in \cite{Pe1}. By the Hilbert
series of Poisson homologies of $A$ given in \cite[p.1154]{Pe1}
and the Poincar{\' e} duality, both $PH^0(A)$ and $PH^1(A)$ have 
Hilbert series $\frac{1}{(1-t^2)^2}$. By Remark \ref{xxrem5.2},
$rgt(A)=0$. Therefore $A$ is rigid of graded twisting. By the 
correspondence between quadratic Poisson algebras and the 
homogeneous coordinate rings of quantum spaces, the corresponding  
Sklyanin algebra is considered as rigid in some sense. Further,
since $h_{PH^1(A)}(t)=h_{PH^0(A)}(t)=h_Z(t)=\frac{1}{(1-t^2)^2}$,
$A$ is $PH^1$-minimal in the sense of Definition \ref{xxdef7.3}(1).

Note that the Poisson (co)homologies of the quadratic Poisson 
algebra corresponding to the $3$-dimensional Sklyanin algebra were 
computed in \cite{Pe2, Pi1, VdB}, an argument similar to the above shows 
that $rgt(A)=0$ (namely, $A$ is rigid of graded twisting) and $A$ is 
$PH^1$-minimal. We will give an elementary and direct computation 
of this $rgt(A)$ in Example \ref{xxex6.6}(Case 3). 
\end{remark}

\begin{remark}
\label{xxrem5.4}
Let $n\geq 2$ and $a\in \mathbb C$. Let 
$A(n,a)=\mathbb C[x_1,\ldots,x_n]$ be the family of 
Poisson polynomial algebras studied in \cite{LS}. Then 
one can check for each fixed $n\ge 2$ the family 
$A(n,a)$ satisfies all the assumptions stated in 
Lemma \ref{xxlem4.4}(6) with $a_0=\frac{(n+2)(1-n)}{2(n+1)}$ 
such that $rgt(A(n,a))=-1$. Computations are omitted. In 
particular for any $a,a'\in \mathbb C$, $A(n,a)$ is a 
Poisson twist of $A(n,a')$, which is a Poisson version of 
\cite[Theorem 4.2]{LS}. 
\end{remark}

We will compute $rgt$ for some classes of Poisson algebras. 
Here is a warmup.  

\begin{example}
\label{xxex5.5}
Let $\deg x_i=i$ for $i=1,2,3$. Let $\Omega= x^6+y^3+z^2+
\lambda xyz$ where $\lambda\in \Bbbk$. For any $\lambda$, 
$\Omega$ is irreducible. Define a Poisson structure on 
$A:=\Bbbk[x,y,z]$ by
$$\{f,g\}
:= \det \begin{pmatrix} 
\Omega_x & f_x & g_x\\
\Omega_y & f_y & g_y\\
\Omega_z & f_z & g_z
\end{pmatrix}$$
for all $f,g\in A$. It is easy to see that
\begin{align}
\label{E5.5.1}\tag{E5.5.1}
\{x,y\}&= \Omega_z=2z+\lambda xy,\\
\label{E5.5.2}\tag{E5.5.2}
\{x,z\}&= -\Omega_y=-(3y^2+\lambda xz),\\
\label{E5.5.3}\tag{E5.5.3}
\{y,z\}&= \Omega_x=6x^5+\lambda yz
\end{align}
and that $A$ is unimodular. If $\lambda^6\neq 6^3$, then 
$A_{sing}:=A/(\Omega_x,\Omega_y,\Omega_z)$ is finite dimensional.
In this case, $\Omega$ has isolated singularity.  
Let $\delta$ be a graded Poisson derivation. Then 
$$\begin{aligned}
\delta(x)&= c_1 x,\\
\delta(y)&= c_2 y+ c_3 x^2,\\
\delta(z)&= c_4 z+ c_5 x^3+ c_6 xy.
\end{aligned}
$$
Subtracting by $c_1 E$, we may assume that $c_1=0$. Applying $\delta$ 
(with $c_1=0$) to \eqref{E5.5.1}, we 
obtain that
$$
c_2 (2z+\lambda xy) 
=2(c_4 z+ c_5 x^3+ c_6 xy)+\lambda x (c_2 y+ c_3 x^2),
$$
which implies that $c_2=c_4$, $c_6=0$, and $2c_5+\lambda c_3=0$.
Applying $\delta$ 
(with $c_1=0$ and $c_6=0$) to \eqref{E5.5.2}, we 
obtain that
$$
-c_4(3y^2+\lambda xz)
=-6y(c_2 y+ c_3 x^2)-\lambda x (c_4 z+ c_5 x^3),
$$
which implies that $c_2=c_3=c_4=c_5=0$. Therefore $\delta=0$.
This means that $rgt(A)=0$. By Lemma \ref{xxlem4.4}(1), $A$ does 
not have any non-trivial twists.

In general, when $\Omega$ has isolated singularity, the fact 
that $rgt(A)=0$ also follows from the Poisson cohomology 
computation given in \cite[Proposition 4.5]{Pi1} (after matching 
up the notations). The same idea applies to the algebra in 
Example \ref{xxex6.6}(Case 3).
\end{example}

\section{Some computations of $rgt$}
\label{xxsec6}

In this section we compute $rgt$ for all quadratic Poisson 
structures on $\Bbbk[x,y,z]$. Some of the computations have 
been done by other researchers in different language (for 
example, some are hidden inside in Poisson cohomology 
computation), but we provide all details of computations of 
$rgt$ for completeness. The classification of all quadratic 
Poisson structures on $\Bbbk[x,y,z]$ were given in 
\cite{DH, DML, LX}. 

First we fix some notations. Let $\Bbbk$ be an algebraically 
closed field of characteristic zero (one might assume 
$\Bbbk={\mathbb C}$ if necessary). Let 
$V=A_1=\Bbbk x+\Bbbk y+\Bbbk z$ and let $\{-,-\}$ be a 
quadratic Poisson bracket of $A:=\Bbbk[x,y,z]=\Bbbk [V]$. Let 
$f$ be a graded Poisson derivation of $(A,\{-,-\})$. Let 
$W=\{V,V\}$. It is clear that
\begin{equation}
\label{E6.0.1}\tag{E6.0.1}
f(W)=
f(\{V,V\})\subseteq \{f(V),V\}+\{V,f(V)\}\subseteq \{V,V\}=W.
\end{equation}
Write
\begin{equation}
\label{E6.0.2}\tag{E6.0.2} 
f(x)=a_1 x+ a_2 y+ a_3 z,
f(y)=b_1 x+b_2 y+b_3 z, f(z)=c_1 x+c_2 y+c_3 z.
\end{equation}
After replacing $f$ by $f-a_1 E$, we can further assume that
\begin{equation}
\label{E6.0.3}\tag{E6.0.3} 
a_1\ {\text{in \eqref{E6.0.2} is zero.}} 
\end{equation}

If $\{-,-\}$ is unimodular, then it comes from a ``potential''
$\Omega\in A_3$. One can classify $\Omega$ as follows: (a):
$\Omega$ is a product of three linear terms, (b): $\Omega$ is a 
product of a linear term and an irreducible polynomial of 
degree $2$, and (c): $\Omega$ is irreducible of degree $3$. 
This classification is classical and well-known, see for 
example \cite{BM, KM, Ri}.  

The following four examples deal with the first case, namely, 
$\Omega$ is a product of three linear terms.

Define $A_{sing}$ to be $A/(\Omega_x, \Omega_y,\Omega_z)$.

\begin{example}
\label{xxex6.1}
Let $\Omega=x^3$. Then
$$\begin{aligned}
\{x,y\}&=\Omega_z=0,\\
\{z,x\}&=\Omega_y=0,\\
\{y,z\}&=\Omega_x=3x^2.
\end{aligned}
$$
It is clear that $\Kdim(A_{sing})=2$. Let $f$ be a Poisson 
derivation of $A$. By \eqref{E6.0.1}
$$2x f(x)=f(x^2) \in f(W) \subseteq W=\Bbbk x^2.$$
Then $f(x)=ax$ for some $a\in \Bbbk$. By \eqref{E6.0.3},
we may assume that $f(x)=0$. Retain the notations in 
\eqref{E6.0.2}. Applying $f$ to $\{y,z\}=3x^2$ 
implies that $b_2+c_3=0$ with $b_1, b_3, c_1, c_2$ free. 
Therefore $rgt(A)=-5$. One can check that every Poisson 
normal element of $A$ is Poisson central. 
\end{example}

\begin{example}
\label{xxex6.2}
Let $\Omega=x^2y$. Then
\begin{align}
\notag \{x,y\}&=\Omega_z=0,\\
\label{E6.2.1}\tag{E6.2.1}
\{z,x\}&=\Omega_y=x^2,\\
\notag \{y,z\}&=\Omega_x=2xy.
\end{align}
It is clear that $\Kdim(A_{sing})=2$. Let $f$ be a Poisson 
derivation of $A$. By \eqref{E6.0.1}, we have
$$\Bbbk x^2+\Bbbk xy=W \supseteq f(W)=
\Bbbk (2x f(x))+\Bbbk (f(x)y+xf(y)).$$
Then $f(x)y$ does not have terms $y^2$ and $yz$. So
$f(x)=ax$ for some $a\in \Bbbk$, and by \eqref{E6.0.3}, 
we may assume that $f(x)=0$. Using the notations in 
\eqref{E6.0.2}, then \eqref{E6.2.1} implies that 
$b_1=b_3=c_3=0$ with $b_2, c_1, c_2$ free. Therefore
$rgt(A)=-3$.
\end{example}

\begin{example}
\label{xxex6.3}
Let $\Omega=xyz$. Then
\begin{align}
\notag \{x,y\}&=\Omega_z=xy,\\
\label{E6.3.1}\tag{E6.3.1}
\{z,x\}&=\Omega_y=xz,\\
\notag \{y,z\}&=\Omega_x=yz.
\end{align}
As before we assume that $a_1=0$. Note that  
$W:=\Bbbk xy+\Bbbk yz +\Bbbk xz$ which does not 
contain term $x^2$ and $y^2$. By \eqref{E6.0.1}, we have
$$(a_1x+a_2y+a_3z)y+(b_1x+b_2y+b_3z)x
=f(x)y+xf(y)=f(xy)\in W$$
which implies that 
$a_2=b_1=0$. Similarly, using $f(xz), f(yz)\in W$, we obtain 
that $a_3=c_1=b_3=c_2=0$. Thus $f(x)=0$, $f(y)=b_2 y$ and 
$f(z)=c_3 z$. Therefore $rgt(A)=-2$.

One can check that $\Kdim (A_{sing})=1$. 
\end{example}

\begin{example}
\label{xxex6.4}
Let $\Omega=xy(x+y)$. Then
\begin{align}
\notag \{x,y\}&=\Omega_z=0,\\
\label{E6.4.1}\tag{E6.4.1}
\{z,x\}&=\Omega_y=x^2+2xy,\\
\notag \{y,z\}&=\Omega_x=2xy+y^2.
\end{align}
Again we may assume that $a_1=0$. By \eqref{E6.0.1}, we have 
$$\begin{aligned}
f(x^2+2xy)&=2xf(x)+2xf(y)+2yf(x)\in 
\Bbbk (x^2+2xy)+\Bbbk (2xy+y^2)=:W,\\
f(2xy+y^2)&=2xf(y)+2yf(x)+2yf(y)\in 
\Bbbk (x^2+2xy)+\Bbbk (2xy+y^2).
\end{aligned}
$$
As a consequence, both $f(x)$ and $f(y)$ do not have the $z$ term, 
namely, $a_3=b_3=0$. Furthermore, by the above and a little bit of 
linear algebra, we have 
$$b_1=b_2=-a_2.$$
Now we can write $f(x)=a_2 y$ and $f(y)=-a_2 x-a_2y$. 
Applying $f$ to the second equation of \eqref{E6.4.1}, we obtain 
that $c_3=0$ and $a_2=0$ with $c_1$ and $c_2$ free. Therefore 
$rgt(A)=-2$.

One can check that $\Kdim (A_{sing})=1$. 
\end{example}

Next we consider the second case. Some linear algebra details will 
be omitted in the next two examples.

\begin{example}
\label{xxex6.5}
Suppose $\Omega=x\omega(x,y,z)$ where $\omega$ is an irreducible 
polynomial of degree $2$. Since $\Bbbk$ is algebraically closed, up 
to a linear change of variables, we can assume that $\omega$ is 
either $yz+ax^2+bxy+cxz$ (with further change of basis, we can 
assume that $c=0$) or $y^2+ ax^2+ bxy+ cxz$ (with further change 
of basis, we can assume that $b=0$). So we need to consider the 
following two cases:

Case 1: $\omega=yz+ ax^2+ b xy$. We can further assume $a=1$ since 
$\omega$ is irreducible. So $\omega=yz+x^2+b xy$. Replacing $z$ by 
$z-bx$, we have $\omega=yz+x^2$ and $\Omega=xyz+x^3$. In this case, 
the Poisson bracket of $A$ is determined by
\begin{align}
\notag \{x,y\}&=\Omega_z=xy,\\
\label{E6.5.1}\tag{E6.5.1}
\{z,x\}&=\Omega_y=xz, \\
\notag \{y,z\}&=\Omega_x=yz+3x^2.
\end{align}
One can check that $\Kdim (A_{sing})=1$. Recall that $W$ is 
$\{V,V\}=\Bbbk xy+ \Bbbk xz+\Bbbk (yz+3x^2)$ which does not 
involve either $y^2$ or $z^2$. By the second equation of 
\eqref{E6.5.1}, we have 
$$f(x)z +xf(z)\in W$$
which implies that $f(x)$ does not have the $z$ term, or $a_3=0$. 
Similarly, $b_3=0$ by the third equation of \eqref{E6.5.1}. 
By using the first equation of \eqref{E6.5.1}, we obtain that 
$a_2=0$. By \eqref{E6.0.3}, we can assume that $f(x)=0$. Now 
the first equation of \eqref{E6.5.1} implies that 
$\{x,f(y)\}=xf(y)$. So $f(y)\in \Bbbk y$ or $f(y)=b_2 y$. 

Using the second equation of \eqref{E6.5.1}, one can show that 
$c_1=c_2=0$. By using the third equation of \eqref{E6.5.1} and 
the fact that $W$ does not contain the term $y^2$, we obtain that
$c_2=0$. So $f(z)=c_3z$. From this we can derive that $b_2+c_3=0$.
Therefore $rgt(A)=-1$.

Case 2: $\omega=y^2+ ax^2+ c xz$. If $c=0$, then it is covered 
in case 1. If $c\neq 0$, we can assume $a=0$ and $c=1$. So 
$\omega=y^2+xz$ and $\Omega=xy^2+x^2z$. Then we have
\begin{align}
\notag \{x,y\}&=\Omega_z=x^2,\\
\label{E6.5.2}\tag{E6.5.2}
\{z,x\}&=\Omega_y=2xy,\\
\notag \{y,z\}&=\Omega_x=y^2+2xz.
\end{align}
One can check that $\Kdim (A_{sing})=1$. By definition 
$W=\Bbbk x^2+\Bbbk xy+\Bbbk (y^2+2xz)$, and it does not contain 
terms $z^2$ and $yz$. Using the third equation of \eqref{E6.5.2}, 
we have 
$$f(y^2+2xz)=2yf(y)+2xf(z)+2zf(x)\in W.$$
Therefore $f(x)z$ does not contain a $z^2$ term. So 
$f(x)=a_1 x+ a_2 y$ and with \eqref{E6.0.3} we can assume that 
$f(x)=a_2 y$. Now we apply $f$ to the first equation of 
\eqref{E6.5.2}, we obtain that $b_2=0$ and $b_3=-a_2$. (Some 
calculations are omitted.) Applying $f$ to the second equation 
of \eqref{E6.5.2}, we obtain that $a_2=0$, $c_3=0$ and 
$c_2=-2b_1$. Finally applying $f$ to the third equation of 
\eqref{E6.5.2}, we obtain that $c_1=0$ with $b_1$ free. Therefore 
$rgt(A)=-1$.
\end{example}

The final example deals with the irreducible cubic $\Omega$.
%Let $A_{sing}:=A/(\Omega_x,\Omega_y,\Omega_z)$.

\begin{example}
\label{xxex6.6}
Suppose $\Omega$ is an irreducible cubic function in $x,y,z$. 
By classification (see, for example, \cite[Theorems 1 and 2]{KM} 
and \cite[Theorem 2.12]{BM}), there are following two singular 
ones and one non-singular.

Case 1: $\Omega=x^3+y^2z$. Then we have
\begin{align}
\notag \{x,y\}&=\Omega_z=y^2,\\
\label{E6.6.1}\tag{E6.6.1}
\{z,x\}&=\Omega_y=2yz,\\
\notag \{y,z\}&=\Omega_x=3x^2.
\end{align}
So $W:=\Bbbk y^2+\Bbbk yz+\Bbbk x^2$ does not have 
terms $z^2$, $xy$ and $xz$. Then 
$f(x^2)=2xf(x)\in W$ implies that $f(x)\in \Bbbk x$. 
By \eqref{E6.0.3}, we have $f(x)=0$. Applying 
$f$ to the first equation of \eqref{E6.6.1}, we obtain
that $f(y)=0$. Applying 
$f$ to the last two equations of \eqref{E6.6.1}, we obtain
that $f(z)=0$. So $rgt(A)=0$.

One can check that $\Kdim (A_{sing})=1$. By a Gr{\" o}bner Basis 
argument, one sees that the Hilbert series of $A_{sing}$ is 
$\frac{2}{(1-t)}+t^2+t-1$.

Case 2: $\Omega=x^3+x^2z+y^2z$. Then we have
\begin{align}
\notag \{x,y\}&=\Omega_z=x^2+y^2,\\
\label{E6.6.2}\tag{E6.6.2}
\{z,x\}&=\Omega_y=2yz,\\
\notag \{y,z\}&=\Omega_x=3x^2+2xz.
\end{align}
So $W$ does not have terms $z^2$ and $xy$.
By the second equation of \eqref{E6.6.2}, we have
$$f(yz)=yf(z)+zf(y)\in W$$
which implies that $f(y)$ has no $z$ term and $f(z)$
has no $x$ term. By the third relation of \eqref{E6.6.2}, 
we obtain that $f(x)$ has no $z$ term. By \eqref{E6.0.3}, 
one can assume that $f(x)=a_2 y$. Applying $f$ to the 
first equation, we obtain that $b_2=0$ and $b_1=-a_2$
(namely, $f(x)=a_2y$ and $f(y)=-a_2 x$). Applying $f$ to the 
second equation, we obtain that $c_2=a_2=0$ (so $f(x)=f(y)=0$). 
Then applying $f$ to the third equation of \eqref{E6.6.2} 
yields that $f(z)=0$. Therefore $rgt(A)=0$.

One can check that $\Kdim (A_{sing})=1$. By a Gr{\" o}bner Basis 
argument, one sees that the Hilbert series of $A_{sing}$ 
is $\frac{2}{(1-t)}+t^2+t-1$.

Case 3: $\Omega=\frac{1}{3}(x^3+y^3+z^3)+\lambda xyz$ where 
$\lambda^3\neq -1$ (which is the Hesse normal form given in 
\cite[Theorem, 2.12]{BM}). One can check that $A_{sing}$ is 
finite dimensional or $\Kdim (A_{sing})=0$. Consequently, 
$\Omega$ has an isolated singularity at zero. As mentioned 
at the end of Example \ref{xxex5.5}, we have $rgt(A)=0$ which 
follows from the Poisson cohomology computation given in
\cite[Proposition 4.5]{Pi1} (and \cite[Theorem 5.1]{VdB}). 
Here we will give a direct computation. By definition, 
\begin{align}
\notag \{x,y\}&=\Omega_z=z^2+\lambda xy,\\
\label{E6.6.3}\tag{E6.6.3}
\{z,x\}&=\Omega_y=y^2+\lambda xz,\\
\notag \{y,z\}&=\Omega_x=x^2+\lambda yz.
\end{align}
Note that $W=\Bbbk (z^2+\lambda xy)+\Bbbk (y^2+\lambda xz)
+\Bbbk (x^2+\lambda yz)$. This means that in $W$, $z^2$ 
(respectively, $y^2$ and $x^2$) appears together with 
$\lambda xy$ (respectively, $xz$ and $yz$). By the first 
equation of \eqref{E6.6.3}, we have $f(z^2+\lambda xy)\in W$. 
Using the notation in \eqref{E6.0.2}, we compute
$$\begin{aligned}
f(z^2+\lambda xy)
&= 2zf(z)+\lambda (xf(y)+yf(x))\\
&= 2z(c_1 x+c_2 y+c_3 z)+\lambda
[x(b_1 x+ b_2 y+b_3 z)+y(a_1 x+a_2 y+ a_3 z)]\\
&\equiv 2c_1 xz+ 2c_2 yz+ 2c_3 (-\lambda xy)
+\lambda [b_1(-\lambda yz)+b_2 xy+b_3 xz\\
&\qquad \qquad +a_1 xy
+a_2(-\lambda xz)+a_3 yz]
\mod W\\
&\equiv
(2c_1+\lambda b_3-\lambda^2 a_2) xz
+(2c_2-\lambda^2 b_1+\lambda a_3) yz
\\
&\qquad\qquad +(-2\lambda c_3+\lambda b_2 +\lambda a_1) xy 
\mod W.
\end{aligned}
$$
So we have
$$\begin{aligned}
2c_1+\lambda b_3-\lambda^2 a_2&=0,\\
2c_2+\lambda a_3-\lambda^2 b_1&=0,\\
-2\lambda c_3+\lambda b_2 +\lambda a_1&=0.
\end{aligned}
$$
From now on we assume that $\lambda\neq 0$ (if $\lambda=0$, 
the proof is slightly simpler and is omitted to save the space). 
With this assumption, we can remove $\lambda$ from the third 
equation of the above system. Similarly, by working with the 
last two equations in \eqref{E6.6.3}, we obtain the following
$$\begin{aligned}
2b_1+\lambda c_2-\lambda^2 a_3&=0,\\
2b_3+\lambda a_2-\lambda^2 c_1&=0,\\
-2b_2+c_3+a_1&=0,\\
2a_2+\lambda c_1-\lambda^2 b_3&=0,\\
2a_3+\lambda b_1-\lambda^2 c_2&=0,\\
-2a_1+b_2+c_3&=0.
\end{aligned}
$$
By \eqref{E6.0.3}, we may assume that $a_1=0$. Then by $3$ 
equations involving $a_1$, we obtain that $b_2=c_3=0$. Applying 
$f$ to three equations in \eqref{E6.6.3} with some linear 
algebra computations, we obtain that $f(x)=f(y)=f(z)=0$ 
(but this is true only if $\lambda^3\neq -1$). Therefore $rgt(A)=0$.

Since $\{\Omega_x,\Omega_y, \Omega_z\}$ is a regular sequence, 
the Hilbert series of $A_{sing}$ is $(1+t)^3$. 
\end{example}

Based on the above examples, we have the following
classification.

\begin{corollary}
\label{xxcor6.7} Let $A$ be a quadratic Poisson polynomial ring 
$\Bbbk[x,y,z]$. 
\begin{enumerate}
\item[(1)]
Suppose $A$ is unimodular. Then $(A,\Omega,rgt(A))$ is listed as follows 
up to isomorphisms

\[ \begin{array}{|c|l|l|l|l|l|l|l|} 
\hline \Omega & \; 0 \; &  x^3  &  x^2y &  xyz 
& xy(x+y)& xyz+x^3 & xy^2+x^2z\\ 
\hline rgt(A)    &-8 & -5  & -3  & -2 & -2     & -1   & -1\\ \hline 
    &  & {\text{Ex. 6.1}} &{\text{Ex. 6.2}}& {\text{Ex. 6.3}}& {\text{Ex. 6.4}} 
		&{\text{Ex. 6.5(1)}}& {\text{Ex. 6.5(2)}}\\ \hline 
\end{array} \]

\[ \begin{array}{|c|l|l|l|l|} 
\hline \Omega  & x^3+y^2 z &x^3+x^2z+y^2z
       & \frac{1}{3}(x^3+y^3+z^3) +\lambda xyz, \lambda^3\neq -1\\ 
\hline rgt(A) & 0         & 0           & 0\\ \hline 
& {\text{Ex. 6.6(1)}} & {\text{Ex. 6.6(2)}}&{\text{Ex. 6.6(3)}} \\ \hline
\end{array} \]
\item[(2)]
$Gpd(A)$ is one-dimensional if and only if $A$ is unimodular with $\Omega$ in the second table.
\end{enumerate}
\end{corollary}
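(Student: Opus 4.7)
The plan has two parts. For part (1), I invoke the classical $\mathrm{GL}(V)$-normal form classification of cubic forms in three variables over an algebraically closed field of characteristic zero. Since any unimodular quadratic Poisson structure on $A=\Bbbk[x,y,z]$ has the form $\{f,g\}=\det(\nabla\Omega,\nabla f,\nabla g)$ for a unique (up to scalar) cubic potential $\Omega$, and since rescaling the bracket leaves $Gpd(A)$, $Gspd(A)$, and hence $rgt(A)$ invariant by Lemma \ref{xxlem1.6}, two such Poisson algebras are isomorphic exactly when their potentials lie in the same $\mathrm{GL}(V)\times\Bbbk^{\times}$-orbit. Over $\overline{\Bbbk}$ the orbits of cubic forms split into the reducible cases — the zero form, a cube $x^3$, a square-times-linear $x^2 y$, and two types of distinct triples of lines ($xyz$ with linear span $V^{*}$, or $xy(x+y)$ with two-dimensional span); the two cases of a line times an irreducible conic, depending on whether the line is transversal ($xyz+x^3$) or tangent ($xy^2+x^2z$); and the irreducible cubics — cuspidal $x^3+y^2z$, nodal $x^3+x^2z+y^2z$, and the smooth Hesse pencil $\tfrac{1}{3}(x^3+y^3+z^3)+\lambda xyz$ with $\lambda^3\neq-1$. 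This is classical plane-cubic theory, cited in the paper via \cite{BM,KM,Ri}.

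With the list of representatives fixed, the $rgt$-value of each is read off directly. The $\Omega=0$ entry is handled by noting that the Poisson bracket is trivial, so every degree-$0$ $\Bbbk$-linear endomorphism of $V$ extends to an element of $Gpd(A)=Gspd(A)$, giving $\dim_{\Bbbk}Gspd(A)=9$ and hence $rgt(A)=-8$; the remaining seven entries in the first table and all three entries in the second table are computed explicitly in Examples \ref{xxex6.1}–\ref{xxex6.6}. Assembling these yields both tables.

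For part (2), the plan is a clean case split. If $A$ is unimodular, Theorem \ref{xxthm0.5}(1) gives $Gpd(A)=Gspd(A)$, so $\dim_{\Bbbk}Gpd(A)=1-rgt(A)$; inspecting the tables from part (1), this equals $1$ if and only if $\Omega$ appears in the second table. If $A$ is not unimodular, then the modular derivation $\mathbf{m}$ is a nonzero graded Poisson derivation satisfying $\divv(\mathbf{m})=0$ by Lemma \ref{xxlem0.1}, whereas the Euler derivation $E$ is a Poisson derivation with $\divv(E)=\sum_{i=1}^{3}\deg x_i=3\neq 0$ by Lemma \ref{xxlem1.2}(5); consequently $E$ and $\mathbf{m}$ are $\Bbbk$-linearly independent Poisson derivations, forcing $\dim_{\Bbbk}Gpd(A)\geq 2$ (this is also recorded inside Lemma \ref{xxlem4.4}(4)). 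So $\dim_{\Bbbk}Gpd(A)=1$ forces $A$ to be unimodular, reducing to the previous case, and the equivalence in (2) follows.

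The main obstacle is essentially bookkeeping: Examples \ref{xxex6.1}–\ref{xxex6.6} already do the computational heavy lifting, and the structural inputs (Lemmas \ref{xxlem0.1}, \ref{xxlem1.2}, \ref{xxlem4.1}, \ref{xxlem4.4} and Theorem \ref{xxthm0.5}) are all in place. The only point that needs some care is confirming exhaustiveness of the normal-form list for cubic forms over an algebraically closed field of characteristic zero, but this is standard and is handled by citing the classical references already used in the paper.
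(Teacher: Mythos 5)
Your proposal is correct and follows essentially the same route as the paper: unimodular quadratic structures on $\Bbbk[x,y,z]$ are Jacobian with cubic potential $\Omega$, the classical classification of plane cubics (cited via \cite{BM,KM,Ri}) gives the list of normal forms, and the $rgt$ values are exactly those computed in Examples \ref{xxex6.1}--\ref{xxex6.6} (with the trivial case $\Omega=0$ giving $\dim_\Bbbk Gspd(A)=9$, hence $rgt=-8$). Your part (2) argument — $Gpd(A)=Gspd(A)$ in the unimodular case via Lemma \ref{xxlem4.1}, and $\dim_\Bbbk Gpd(A)\geq 2$ in the non-unimodular case via $E$ and $\mathbf{m}$ as in Lemma \ref{xxlem4.4} — is likewise the argument implicit in the paper.
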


\begin{definition}
\label{xxdef6.8}
A Poisson derivation $\phi$ of a Poisson algebra $A$ is called
{\it ozone} if $\phi(z)=0$ for all $z$ in the Poisson 
center of $A$.
\end{definition}

By Definition \ref{xxdef1.4}(1), the modular derivation ${\mathbf m}$ 
is always ozone.

\begin{lemma}
\label{xxlem6.9}
Let $A$ be the quadratic Poisson algebra in Example \ref{xxex6.6}(1)
with $\Omega=x^3+y^2z$. Then every ozone derivation of $A$ is 
Hamiltonian.
\end{lemma}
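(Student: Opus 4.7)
My plan is to reduce to the graded case, handle low degrees directly, and tackle the main case $d \geq 1$ through the syzygy structure of $\nabla\Omega = (3x^2, 2yz, y^2)$ together with the Poisson compatibility identities.

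Any ozone derivation $\phi$ decomposes into homogeneous components $\phi = \sum_{d} \phi_{d}$. Each $\phi_{d}$ is Poisson (since the bracket is graded) and ozone (since $\Omega$ is homogeneous, the ozone condition decomposes graded piece by graded piece); moreover only finitely many $\phi_d$ are nonzero because $\phi(x),\phi(y),\phi(z)\in A$. It therefore suffices to show each graded $\phi_d$ is Hamiltonian. If $d\leq-2$, then $\phi_d(x_i)\in A_{d+1}=0$ and so $\phi_d=0$. If $d=-1$, then each $\phi_d(x_i)$ is a scalar, and substituting into the three Poisson identities (using $\{x,y\}=y^2$, $\{x,z\}=-2yz$, $\{y,z\}=3x^2$) forces $\phi_d(x_i)=0$. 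If $d=0$, Example \ref{xxex6.6} (Case 1) gives $Gpd(A) = \Bbbk E$, and since $E(\Omega)=3\Omega\neq 0$, the Euler derivation is not ozone, so $\phi_d=0=H_0$.

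The main case $d\geq 1$ hinges on understanding the syzygies of $\nabla\Omega$. The ozone condition $3x^2\phi(x)+2yz\phi(y)+y^2\phi(z)=0$ means $(\phi(x),\phi(y),\phi(z))$ is such a syzygy. Although $(3x^2, 2yz, y^2)$ is not regular --- $y$ divides both $\Omega_y$ and $\Omega_z$ --- the reduced sequence $(3x^2,2z,y)$ is regular (which one checks on $A/(3x^2)$ and $A/(3x^2,2z)$), and from this I show that the syzygy module of $\nabla\Omega$ is generated over $A$ by the Koszul syzygies $K_1=(2yz,-3x^2,0)$, $K_2=(y^2,0,-3x^2)$ and the extra syzygy $S=(0,y,-2z)$. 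Writing $(\phi(x),\phi(y),\phi(z))=\alpha K_1 + \beta K_2 + \gamma S$ for some $\alpha,\beta,\gamma\in A$, a direct calculation of $H_a$ gives $(H_a(x),H_a(y),H_a(z))=a_z K_1 - a_y K_2 + (y\,a_x)\,S$; consequently $\phi=H_a$ is equivalent to the divisibility $y\mid\gamma$ together with the integrability relations $\alpha_y+\beta_z=0$, $(\gamma/y)_z=\alpha_x$, $(\gamma/y)_y=-\beta_x$ that permit $(\alpha,-\beta,\gamma/y)$ to be recovered as the gradient of some $a$.

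The remaining work --- and the main obstacle --- is to substitute the above parameterization into the three Poisson identities $\phi(\{x_i,x_j\})=\{\phi(x_i),x_j\}+\{x_i,\phi(x_j)\}$ and verify that they force precisely the divisibility $y\mid\gamma$ plus the three integrability conditions (working modulo the kernel of the parameterization, which is the rank-one $A$-submodule $(y,-2z,3x^2)\cdot A$ corresponding to the Hamiltonian kernel $H_\Omega = 0$). As a warm-up in degree $d=1$: parameter counting gives an $18$-dimensional space of candidate triples of degree-$2$ polynomials, the ozone relation cuts it to $5$ dimensions, and the first Poisson identity alone kills two of these parameters, leaving the expected $3$-dimensional Hamiltonian subspace $\Bbbk H_x+\Bbbk H_y+\Bbbk H_z$. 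The delicate point at higher degree will be extracting the divisibility $y\mid\gamma$ --- a feature that reflects the non-isolated singularity of $\Omega$ along the $z$-axis --- from these Poisson identities, which requires careful bookkeeping of the $y$-divisibility of the various terms arising from the brackets $\{x,y\}=y^2$, $\{x,z\}=-2yz$, $\{y,z\}=3x^2$.
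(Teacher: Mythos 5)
Your reduction to homogeneous components and the treatment of degrees $d\le 0$ are fine, and the structural setup for $d\ge 1$ checks out: $(3x^2,2z,y)$ is a regular sequence, the syzygy module of $\nabla\Omega=(3x^2,2yz,y^2)$ is generated by $K_1$, $K_2$, $S$, your formula $(H_a(x),H_a(y),H_a(z))=a_zK_1-a_yK_2+(y\,a_x)S$ is correct, and so is the identification of the kernel of the parameterization as $(y,-2z,3x^2)\cdot A$. But the argument stops exactly where the lemma actually lives. You say yourself that ``the remaining work --- and the main obstacle --- is'' to substitute the parameterization into the Poisson-derivation identities and show that they force $y\mid\gamma$ together with the integrability relations, and this step is never carried out: the only evidence offered is an unwritten parameter count in degree $1$, and for higher degrees you only announce that ``careful bookkeeping'' will be needed. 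Extracting $y\mid\gamma$ in every degree is precisely the hard content of the lemma --- it is where the non-isolated singularity of $\Omega$ along the $z$-axis enters --- and it is further complicated by a point your sketch does not engage with concretely: the Hamiltonian criterion need only hold for some representative of $(\alpha,\beta,\gamma)$ modulo $(y,-2z,3x^2)\cdot A$, so one must show the Poisson identities permit a kernel adjustment making the triple a genuine gradient, not that the particular chosen $(\alpha,\beta,\gamma)$ already satisfies the conditions. As written, this is a plan with a correct frame, not a proof.

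For contrast, the paper's proof spends essentially all of its effort on exactly this missing step, organized by an auxiliary grading $\deg_G(x)=0$, $\deg_G(y)=1$, $\deg_G(z)=-2$ (so $\deg_G\Omega=0$): the identity $H_x(f)=ify$ for $G$-homogeneous $f$ of $G$-degree $i$ lets one remove all $G$-inhomogeneous parts of $\phi(x)$ by subtracting Hamiltonians; the relation $\phi(\Omega)=0$ then gives $y\mid\phi(x)$, and further analysis yields $y^3\mid\phi(x)$ and $y\mid\phi(y)$; finally, applying $\phi$ to $\{x,y\}=y^2$ and $\{z,x\}=2yz$ forces $\phi(x)=0$, $\phi(y)=y^2q_{(0)}$, $\phi(z)=-2yzq_{(0)}$ with $q_{(0)}$ a polynomial in $x$ and $\Omega$, and an explicit primitive $xq'_{(0)}$ exhibits $\phi$ as Hamiltonian. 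To complete your route you would need an analogous degree-by-degree divisibility and integrability analysis (after kernel adjustment); nothing in the current text supplies it.
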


\begin{proof}
By definition,
\begin{align}
\{x,y\}&= y^2, \label{E6.9.1}\tag{E6.9.1}\\
\{z,x\}&= 2yz, \label{E6.9.2}\tag{E6.9.2}\\
\{y,z\}&= 3 x^2. \label{E6.9.3}\tag{E6.9.3}
\end{align}
We define a new grading on the polynomial ring $A=\Bbbk[x,y,z]$. 
Let $G$ be ${\mathbb Z}$ and define $\deg_G(x)=0$, $\deg_G(y)=1$ 
and $\deg_G(z)=-2$. For example, $\deg_G(\Omega)=0$. Every element 
$f\in A$ can be written as $\sum_{i\in {\mathbb Z}} f_{(i)}$ where 
$f_{(i)}$ is homogeneous of $G$-degree $i$. Then $f=f_{(i)}$ if 
and only if $f$ is homogeneous of $G$-degree $i$. By \eqref{E6.9.1} 
and \eqref{E6.9.2} the Hamiltonian derivation $H_x$ has $G$-degree 
1. 

\medskip

\noindent
{\bf Claim 1:} If $f$ is homogeneous of $G$-degree $i$, then 
$H_x( f)=i fy$.

\noindent
{\it Proof:} Let $f$ be a linear combination of monomials 
$x^a y^b z^c$. Since $\deg_G(f)=i$, we have $b-2c=i$. Then
$$\begin{aligned}
H_x(x^a y^b z^c)
&= b x^a y^{b-1} y^2 z^c + c x^a y^b z^{c-1} (-2yz)\\
&=(b-2c) (x^a y^b z^c)y=i (x^a y^b z^c)y.
\end{aligned}
$$
So the claim follows.

\medskip

Let $\phi$ denote an ozone Poisson derivation of $A$.

\medskip

\noindent
{\bf Claim 2:} Up to a Hamiltonian derivation, $\phi(x)=y w_{(0)}$
where $\deg_G(w_{(0)})=0$.

\noindent
{\it Proof:}
Since $\Omega$ is Poisson central, $\phi(\Omega)=0$. Then 
\begin{equation}
\label{E6.9.4}\tag{E6.9.4}
0=\phi(\Omega)=3x^2 \phi(x)+2yz \phi(y)+y^2 \phi(z).
\end{equation}
This implies that $y\mid \phi(x)$. Let $\phi(x)=y w$ where 
$w=\sum_{i\in {\mathbb Z}} w_{(i)}$ where $\deg_G(w_{(i)})=i$. 
By {\bf Claim 1}, $H_{\sum_{i\neq 0}\frac{-1}{i} w_{(i)}}(x)=
H_{x}(\sum_{i\neq 0}\frac{1}{i} w_{(i)})=\sum_{i\neq 0} 
w_{(i)} y$. After replacing $\phi$ by $\phi-H_{\sum_{i\neq 0}
\frac{-1}{i} w_{(i)}}$, we obtain that $\phi(x)=yw_{(0)}$ as 
required. 

\medskip

\noindent
{\bf Claim 3:} If $\deg_G(w)=0$, then $w$ is a polynomial
in $x$ and $\Omega$. 

\noindent
{\it Proof:} Since $w$ has $G$-degree 0, 
$w=\sum_{i,j\geq 0} \alpha_{i,j} x^{i} y^{2j} z^{j}$.
The assertion follows after replacing $y^2z$ by $\Omega-x^3$.

\medskip

From now on, we assume that $\phi(x)=y w_{(0)}$.

\medskip

\noindent
{\bf Claim 4:}
$\deg_G(\{\phi(x),y\})=3$ or $\{\phi(x),y\}=\{\phi(x),y\}_{(3)}$.

\noindent
{\it Proof:} Write $\phi(x)=y\sum_{i,k\geq 0} \alpha_{i,k} x^{i}\Omega^k$. 
We compute
$$\begin{aligned}
\{\phi(x),y\}&=\{y\sum_{k\geq 0} \alpha_k x^{n-3k}\Omega^k,y\}\\
&=y\{\sum_{i,k\geq 0} \alpha_{i,k} x^{i}\Omega^k,y\}\\
&=y\sum_{i,k\geq 0} \alpha_{i,k} i x^{i-1} y^2 \Omega^k
=y^3\sum_{i,k\geq 0} \alpha_{i,k} i x^{i-1} \Omega^k
\end{aligned}
$$
which has $G$-degree 3.

\medskip

\noindent
{\bf Claim 5:} $y^3\mid \phi(x)$.

\noindent
{\it Proof:} By {\bf Claim 2}, $\phi(x)=y\sum_{i,k\geq 0}
\alpha_{i,k} x^{i} y^{2k} z^k$. If $\alpha_{i,0}\neq 0$ for some 
$i$, we have a nonzero term $y x^{i+2}$ in $3x^2 \phi(x)$. But 
$y x^{i+2}$ cannot appear in $2yz \phi(y)+y^2 \phi(z)$ for any $i$, 
which contradicts \eqref{E6.9.4}. Therefore $\alpha_{i,0}=0$ for 
all $i$ and $y^3\mid \phi(x)$. 

\medskip

\noindent
{\bf Claim 6:} $y\mid \phi(y)$.

\noindent
{\it Proof:} This follows from \eqref{E6.9.4} and {\bf Claim 5}.

\medskip

It follows from \eqref{E6.9.4}, {\bf Claim 5}, and {\bf Claim 6} 
that
\begin{align}
\label{E6.9.5}\tag{E6.9.5}
\phi(x)&=yw_0=y^3z v_0,\\
\label{E6.9.6}\tag{E6.9.6}
\phi(y)&=y f,\\
\label{E6.9.7}\tag{E6.9.7}
\phi(z)&=-2zf-3x^2yz v_0.
\end{align}
where $v_0$ has $G$-degree 0 and $f\in A$. Next we will apply 
$\phi$ to the relations given in \eqref{E6.9.1} and \eqref{E6.9.2}. 
We compute
\begin{align}
\notag
0&=\phi(\{x,y\}-y^2)=\{\phi(x),y\}+\{x,\phi(y)\}-2y\phi(y)\\
\notag
&=\{y w_{(0)}, y\}+\{x, yf\}-2y\phi(y)\\
\notag
&=y\{w_{(0)},y\}_{(2)}+y^2 f+y\{x,f\}-2y^2 f\\
\notag
&=y\{w_{(0)},y\}_{(2)}-y^2 (\sum_{i\in {\mathbb Z}}f_{(i)})+
y\{x,\sum_{i\in {\mathbb Z}}f_{(i)}\}\\
\notag
&=y\{w_{(0)},y\}_{(2)}-y^2 (\sum_{i\in {\mathbb Z}}f_{(i)})+
y^2(\sum_{i\in {\mathbb Z}} i f_{(i)})\\
\notag
&=y\{w_{(0)},y\}_{(2)}+y^2(\sum_{i\in {\mathbb Z}} (i-1)f_{(i)}).
\end{align}
Therefore 
\begin{equation}
\label{E6.9.8}\tag{E6.9.8}
f=f_{(1)} \quad {\text{and}} \quad \{w_{(0)},y\}=0.
\end{equation} 
As a consequence, $f=y q_{(0)}$ where $\deg_G(q_{(0)})=0$.
So we have $\phi(y)=y^2 q_{(0)}$ and $\phi(z)=
-2yz q_{(0)}-3x^2 yz v_{(0)}$. 

Applying $\phi$ to \eqref{E6.9.2}, we have
\begin{align}
\notag
0&=\phi(\{z,x\}-2yz)=\{\phi(z),x\}+\{z,\phi(x)\}
-2\phi(y)z-2y\phi(z)\\
\notag
&=(-2q_{(0)}-3x^2 v_{(0)})\{yz,x\}+zv_{(0)}\{z,y^3\}
-2 y^2 q_{(0)} z-2y(-2yz q_{(0)}-3x^2 yz v_{(0)})\\
\notag
&=(-2q_{(0)}-3x^2 v_{(0)})y^2z+zv_{(0)}(-9x^2y^2)
-2 y^2 q_{(0)} z+4y^2z q_{(0)}+6x^2 y^2z v_{(0)}\\
\notag
&=-6x^2 y^2z v_{(0)}.
\end{align}
Therefore $v_{(0)}=0$ and 
\begin{align}
\label{E6.9.9}\tag{E6.9.9}
\phi(x)&=0,\\
\label{E6.9.10}\tag{E6.9.10}
\phi(y)&=y^2 q_{(0)},\\
\label{E6.9.11}\tag{E6.9.11}
\phi(z)&=-2yz q_{(0)}.
\end{align}
Write
$q_{(0)}=\sum_{i,k\geq 0} \alpha_{i,k} x^{i} \Omega^k$. Let 
$q'_{(0)}=\sum_{i,k\geq 0} \beta_{i,k} x^{i} \Omega^k$ where
$\beta_{i,k}:=\frac{\alpha_{i,k}}{i+1}$. It is easy to 
check that $H_{xq'_{(0)}}=\phi$. Therefore 
$\phi$ is Hamiltonian as desired.
\end{proof}

\begin{lemma}
\label{xxlem6.10}
Let $A$ be the quadratic Poisson algebra in Example 
\ref{xxex6.6}(2) with potential $\Omega=x^3+x^2z+y^2z$. 
Then every ozone derivation of $A$ is Hamiltonian.
\end{lemma}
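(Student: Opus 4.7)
The proof will parallel that of Lemma \ref{xxlem6.9}, adapted to the fact that $\Omega=x^3+x^2z+y^2z$ admits no nontrivial $\mathbb{Z}$-grading making it homogeneous beyond the standard one (solving $3a=2a+c=2b+c$ forces $a=b=c$). For our Jacobian structure $\{x,y\}=x^2+y^2$, $\{z,x\}=2yz$, $\{y,z\}=3x^2+2xz$, any ozone derivation $\phi$ satisfies the relation
\[
(3x^2+2xz)\,\phi(x)+2yz\,\phi(y)+(x^2+y^2)\,\phi(z)=0.
\]
Since the bracket and $\Omega$ are homogeneous for the standard grading, the decomposition $\phi=\sum_d\phi_d$ yields ozone Poisson derivations in each degree, so it suffices to treat $\phi$ graded of some degree $d$. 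A direct check gives $Gpd(A)_d=0$ for $d<0$ while $Gpd(A)_0=\Bbbk E$ by Example \ref{xxex6.6}(Case~2); since $E(\Omega)=3\Omega\ne 0$, no nonzero such $\phi$ is ozone, leaving the genuine range $d\ge 1$.

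For $d\ge 1$ the goal is to produce $a\in A_d$ with $\phi=H_a$. Using the explicit formulas
\[
H_a(x)=-a_y(x^2+y^2)+2yz\,a_z, \qquad H_a(z)=-2yz\,a_x+a_y(3x^2+2xz),
\]
\[
H_a(y)=a_x(x^2+y^2)-a_z(3x^2+2xz),
\]
I would first reduce the ozone identity modulo $z$: this yields $(x^2+y^2)\,\phi(z)|_{z=0}=-3x^2\,\phi(x)|_{z=0}$, and since $\gcd(x^2+y^2,x^2)=1$ in $\Bbbk[x,y]$ (using that $\Bbbk$ is algebraically closed), one obtains $\phi(x)|_{z=0}=(x^2+y^2)u$ and $\phi(z)|_{z=0}=-3x^2 u$ for a common $u\in\Bbbk[x,y]_{d-1}$. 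Choosing $a_0(x,y)\in\Bbbk[x,y]_d$ with $\partial_y a_0=-u$ (possible in characteristic $0$) and setting $a=a_0$, the derivation $\phi-H_a$ is again ozone with $(\phi-H_a)(x)|_{z=0}=(\phi-H_a)(z)|_{z=0}=0$, so both are divisible by $z$. Analogous reductions modulo $x$ and $y$ give further divisibility; combining them with the three Poisson identities $\phi\{x_i,x_j\}=\{\phi(x_i),x_j\}+\{x_i,\phi(x_j)\}$ and iterating, one descends in the $z$-adic filtration (bounded below by the graded degree $d$) until $\phi-H$ vanishes on all three generators for some Hamiltonian $H$.

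The main obstacle is precisely the absence of the auxiliary $G$-grading that made Lemma \ref{xxlem6.9} go through cleanly: there $H_x$ acted on $G$-homogeneous elements of $G$-degree $i$ as multiplication by $iy$, so a single Hamiltonian subtraction put $\phi(x)$ into the form $yw_{(0)}$ and the remaining reductions were nearly forced. Here no single Hamiltonian has such a scaling property, so each step of the descent requires constructing a fresh Hamiltonian from the divisibility data produced at the previous step and then verifying, via the Poisson identities, that the residual derivation satisfies the stronger hypothesis needed to continue. Managing this multi-step descent so that it terminates is the substantive new content beyond the template of Lemma \ref{xxlem6.9}.
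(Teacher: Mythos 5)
Your preliminary reductions are fine: the observation that $\Omega=x^3+x^2z+y^2z$ admits only the standard grading (so the auxiliary $G$-grading trick of Lemma \ref{xxlem6.9} is unavailable), the reduction to graded ozone derivations, the elimination of degrees $d\le 0$, and your first correction step are all correct --- with $a_0(x,y)$ chosen so that $\partial_y a_0=-u$ one indeed gets $(\phi-H_{a_0})(x)|_{z=0}=(\phi-H_{a_0})(z)|_{z=0}=0$. But the argument has a genuine gap exactly where you flag it yourself: the ``multi-step descent in the $z$-adic filtration'' is never defined as an actual induction, and nothing shows that the $z$-divisibility of the residual derivation can be strictly increased at each stage by subtracting Hamiltonians. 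That is where the real difficulty sits. After your first step the residual derivation can still have components of the shape $\phi(x)=xzf(x,z)$ or $\phi(x)=y^3zg(x,y^2z)$: these are divisible by $z$ but are not removed by any single obvious Hamiltonian correction, and reducing the identity $(3x^2+2xz)\phi(x)+2yz\phi(y)+(x^2+y^2)\phi(z)=0$ modulo $x$, $y$, or $z$ produces divisibility constraints that do not by themselves force such terms to vanish; one must exploit the Poisson-derivation identities in a structured, iterated way, and that is precisely the content you defer to ``managing the descent.''

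For comparison, the paper's proof of Lemma \ref{xxlem6.10} carries out exactly this missing part. It introduces the operators $T_m(f)=2yz\,\partial_z f-y^2\partial_y f$ on $\kk[y,z]_m$, determines their kernels and images, and uses them (Claims 1--2) to put $\phi$ into a normal form in which $\phi(x)=xzf(x,z)+y^3zg(x,y^2z)$ and $\phi(y),\phi(z)$ are expressed through $f$, $g$ and an auxiliary polynomial $p$; the identity $\phi(\{x,y\})=\{\phi(x),y\}+\{x,\phi(y)\}$ is then converted into the recursive system \eqref{E6.10.1}, and two separate inductions through that system prove $f=0$ and $g=0$ (Claims 4--5); finally the leftover derivation, which kills $x$, is shown to be Hamiltonian by proving $p=p(x,\Omega)$ and integrating in $x$. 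Your opening move is compatible with this scheme, but without an analogue of the $T_m$-analysis and the recursive equations, the termination of your descent is not established, so the proposal as written does not prove the lemma.
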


\begin{proof} 
The Jacobian Poisson structure on $A=\kk[x,y,z]$ is explicitly 
given by
\begin{equation*}
\{x,y\}=x^2+y^2,\ \{y,z\}=3x^2+2xz,\ \{z,x\}=2yz.
\end{equation*}
We show that every ozone derivation of $A$ is Hamiltonian, 
which is based on a tedious computation. Since $A$ is graded, 
it suffices to check every graded Poisson derivation $\phi$ of 
degree $n$ vanishing on the Poisson center $Z$ is Hamiltonian. So 
we can write  
$$\phi(x)=\sum_{i=0}^{n+1} \phi_i x^i\in \kk[x,y,z] 
\text{ of degree $n+1$ with $\phi_i\in \kk[y,z]_{n+1-i}$. }$$

\medskip
\noindent
\textbf{Claim 1:} By subtracting a Hamiltonian derivation 
$H_g=\{g,-\}$ from $\phi$ for some suitable $g\in \kk[x,y,z]_n$, 
we can assume that $\phi_i\in \kk[z]\bigoplus y^3z(\kk [z,y^2z])$ 
for all $0\le i\le n-1$.

\noindent
{\it Proof:} For simplicity, we denote the $\kk$-linear map 
$T_m: \kk[y,z]_m\to \kk[y,z]_{m+1}$ by 
$$T_m(f):=2yz\frac{\partial{f}}{\partial z}-y^2
\frac{\partial{f}}{\partial y} \text{ for any $m\ge 0$.}$$
It is clear to check that 
\begin{itemize}
\item ${\rm ker}(T_m)=0$ if $3\nmid m$ and 
${\rm ker}(T_m)=\kk(y^2z)^{\frac{m}{3}}$ if $3\mid m$;
\item ${\rm img}(T_m)\bigoplus y({\rm ker}(T_m))=y(\kk[y,z]_m)$. 
In particular $y(y^2z)^{\frac{m}{3}}\not\in {\rm img}(T_m)$ 
if $3\mid m$.
\end{itemize}
For any homogeneous polynomial $g=\sum g_ix^i\in \kk[x,y,z]_n$ 
with $g_i\in \kk[y,z]_{n-i}$, we get
\begin{align*}
\{g,x\}&=\frac{\partial g}{\partial y}\{y,x\}
+\frac{\partial g}{\partial z}\{z,x\}
=-\frac{\partial g}{\partial y}(x^2+y^2)+\frac{\partial g}{\partial z}(2yz)\\
&=-\frac{\partial g_{n-1}}{\partial y}x^{n+1}
-\frac{\partial g_{n-2}}{\partial y}x^n+(T_1(g_{n-1})
-\frac{\partial g_{n-3}}{\partial y})x^{n-1}+\cdots\\
&\quad+(T_{n-2}(g_2)-\frac{\partial g_0}{\partial y})x^2
+T_{n-1}(g_1)x+T_n(g_0).
\end{align*}
Hence by choosing $g_0,g_1,\ldots,g_{n-1}$ for the coefficients 
of $x^0,x^1,\ldots, x^{n-1}$ inductively, we can achieve 
{\bf Claim 1}. 

\medskip

\noindent
\textbf{Claim 2:} Up to modulo a Hamiltonian derivation, we 
can set 
\begin{align*}
\phi(x)&=xzf(x,z)+y^3zg(x,y^2z)\\
\phi(y)&=(\frac{3}{2}xy+yz)f(x,z)
        +(\frac{3}{2}y^4-xy^2z)g(x,y^2z)+(x^2+y^2)p\\
\phi(z)&=-(3xz+2z^2)f(x,z)-3y^3zg(x,y^2z)-2yzp
\end{align*} 
where $f(x,z)\in \kk[x,z]_{n-1}$, $g(x,y^2z)\in \kk[x,y^2z]_{n-3}$ 
and $p\in \kk[x,y,z]_{n-1}$. Furthermore, we can assume $p$ does not 
contain $x^{n-1}$. 

\noindent
{\it Proof:} By {\bf Claim 1}, up to a Hamiltonian derivation, we 
can write 
\begin{align*}
\phi(x)=xzf(x,z)+y^3zg(x,y^2z)+az^{n+1}+bx^ny+cx^{n+1}
\end{align*}  
for some coefficients $a,b,c\in \kk$. Since $\kk[\Omega]\subseteq Z$, 
we have 
\begin{align*}
\phi(\Omega)=(3x^2+2xz)\phi(x)+2yz\phi(y)+(x^2+y^2)\phi(z)=0.
\end{align*}
We have three terms $2axz^{n+2}$, $3cx^{n+3}$ and $3bx^{n+2}y$ 
appearing in $(3x^2+2xz)\phi(x)$, which can only be canceled from the 
remaining terms in $2yz\phi(y)+(x^2+y^2)\phi(z)$. We get $a=0$ 
since $xz^{n+2}$ is certainly not contained there. For the term 
$3cx^{n+3}$, it has to be canceled by the terms in $(x^2+y^2)\phi(z)$, 
which implies that $\phi(z)=-3cx^{n+1}+\cdots$. But the another term 
$-3cx^{n+1}y^2$ appearing in $(x^2+y^2)\phi(z)$ has to be canceled 
inside the same product $(x^2+y^2)\phi(z)$. Repeating the argument, 
we get there is some $v\in \kk[x,y]$ such that $3cx^{n+3}+(x^2+y^2)v=0$. 
This is absurd unless $c=0$. The same argument shows that $b=0$ as well. 
So we can write $\phi(x)$ as in \textbf{Claim 2} and the expressions 
of $\phi(y)$ and $\phi(z)$ follow immediately. Finally, by further 
subtracting $\{a x^n,-\}$ from $\phi$, we can replace $p$ with 
$p-a n x^{n-1}$. So by choosing a suitable scalar $a$, we can 
assume $p$ does not contain $x^{n-1}$. 

\medskip

\noindent
\textbf{Claim 3}: we have
\begin{align*}
\{p,x\}&=-(2z+\frac{3}{2}x)f(x,z)-(3xz+2z^2)
\frac{\partial f(x,z)}{\partial z}+zx\frac{\partial f(x,z)}{\partial x}\\
&\quad +(3y^3-2xyz)g(x,y^2z)+y^3z\frac{\partial g(x,y^2z)}{\partial x}
-2xy^3z^2\frac{\partial g(x,y^2z)}{\partial (y^2z)}.
\end{align*}

\noindent
{\it Proof:} Since  $\phi$ is a Poisson derivation, we have
\begin{align*}
\phi(\{x,y\})=\{\phi(x),y\}+\{x,\phi(y)\}.
\end{align*}
A long and tedious calculation yields \textbf{Claim 3}. 

\medskip

Now we write 
$$f=\sum_{i=1}^n a_i x^{n-i}z^{i-1}\quad\text{and}\quad 
g=\sum_{i=1}^{\lfloor{\frac{n}{3}}\rfloor} b_ix^{n-3i}(y^2z)^{i-1}$$
for some $a_i,b_i\in \kk$ where we set $a_i=0$ if $i\not\in [1,n]$ 
and $b_i=0$ if $i\not\in [1,\lfloor\frac{n}{3}\rfloor]$. Define 
$$c_i:=(n-3i)a_i-(\frac{3}{2}+3i)a_{i+1}.$$
Then {\bf Claim 3} can be rewritten as 
\begin{align*}
\{p,x\}&=\sum_{i=0}^n c_iz^ix^{n-i}+\sum_{i=1}^{\lfloor 
\frac{n}{3}\rfloor}(3b_iy^{2i+1}z^{i-1})x^{n-3i}\\
&\quad -\sum_{i=1}^{\lfloor \frac{n}{3}\rfloor}
(2ib_iy^{2i-1}z^i)x^{n+1-3i}+\sum_{i=1}^{\lfloor 
\frac{n}{3}\rfloor}((n-3i)b_{i}y^{2i+1}z^i)x^{n-1-3i}.
\end{align*}
Write $p=\sum_{i=0}^{n-1}p_ix^i$ with $p_i\in \kk[y,z]_{n-1-i}$ 
and $p_{n-1}=0$. As in {\bf Claim 1}, we get 
\begin{align*}
\{p,x\}=\sum_{i=-1}^{n-1}\left(T_i(p_{n-1-i})-\frac{\partial 
p_{n-3-i}}{\partial y}\right)x^{n-1-i},
\end{align*}
where we set $T_{-1}=0$ and $p_{-2}=p_{-1}=p_n=0$. Hence we have

\begin{align}
T_{3i-1}(p_{n-3i})-\frac{\partial p_{n-3i-2}}{\partial y}
 &=c_{3i}z^{3i}+3b_iy^{2i+1}z^{i-1}\notag\\
\label{E6.10.1}\tag{E6.10.1}
T_{3i-2}(p_{n+1-3i})-\frac{\partial p_{n-1-3i}}{\partial y}
 &=c_{3i-1}z^{3i-1}-2ib_iy^{2i-1}z^i\\
T_{3i}(p_{n-1-3i})-\frac{\partial p_{n-3-3i}}{\partial y}
 &=c_{3i+1}z^{3i+1}+(n-3i)b_{i}y^{2i+1}z^{i}\notag
\end{align}
for all $1\le i\le \lfloor \frac{n}{3}\rfloor$ together with 
$-\frac{\partial p_{n-2}}{\partial y}=c_0$ and 
$-\frac{\partial p_{n-3}}{\partial y}=c_1z$. In particular, 
if $n\equiv 2 \pmod 3$, then $T_{n-1}(p_0)=c_nz^n$.

\medskip
\noindent
\textbf{Claim 4}: We have $f=0$.  

\noindent
{\it Proof:}
We show that $c_0=\cdots=c_n=0$, which implies that 
$a_1=\cdots=a_n=0$. By the above equations, it suffices to show 
that $T_{n-i-1}(p_i)-\frac{\partial p_{i-2}}{\partial y}\in y\kk[y,z]$ 
for $0\le i\le n$. We claim that $p_i\in \kk[y^2,z]$ for $0\le i\le n$. 
It is clear when $i=0$. Suppose it works for $p_i$ for all $i\le m$. 
Then inductively, the above equations imply that 
$T_{n-m-2}(p_{m+1})\in 
{\rm span}_\kk \{y^{2i+1}z^j\,|\, \text{for all possible}\ i,j\}$. 
Note that ${\rm ker}(T_{n-m-2})\in \kk[y^2,z]$. Our claim follows by the 
definition of $T_{n-m-2}$. Since ${\rm img}(T_{n-i-1})\in (y)$, we 
get $T_{n-i-1}(p_i)-\frac{\partial p_{i-2}}{\partial y}\in y\kk[y,z]$ 
for $0\le i\le n$.

\medskip

\noindent
\textbf{Claim 5}: We have $g=0$.  

\noindent
{\it Proof:} By {\bf Claim 4}, we can take $f=0$. We will only treat 
the case $n\equiv 0 \pmod 3$ here and other cases will follow in a
similar manner. We write $n=3s$ and group the equations 
\eqref{E6.10.1} into $s+1$ 
parts named by (Ei) with $0\le i\le s$. In details, (E0) is given by 
\begin{align*}
T_{n-1}(p_0)&=3b_sy^{2s+1}z^{s-1}\tag{E0.1}\\
T_{n-2}(p_1)&=-2sb_sy^{2s-1}z^{s}\tag{E0.2}\\
T_{n-3}(p_2)-\frac{\partial p_0}{\partial y}
&=3b_{s-1}y^{2s-1}z^{s-1}\tag{E0.3}
\end{align*}
For $1\le i\le s-2$, (Ei) is given by
\begin{align*}
T_{n-3i-1}(p_{3i})-\frac{\partial p_{3i-2}}{\partial y}
&=3b_{s-i}y^{2s-2i+1}z^{s-i-1}\tag{Ei.1}\\
T_{n-3i-2}(p_{3i+1})-\frac{\partial p_{3i-1}}{\partial y}
&=-2(s-i)b_{s-i}y^{2s-2i-1}z^{s-i}\tag{Ei.2}\\
T_{n-3i-3}(p_{3i+2})-\frac{\partial p_{3i}}{\partial y}
&=(3s-3(s-i-1))b_{s-i-1}y^{2s-2i-1}z^{s-i-1}\tag{Ei.3}
\end{align*}
Moreover, (E(s-1)) and (Es) are given by
\begin{align*}
T_{2}(p_{n-3})-\frac{\partial p_{n-5}}{\partial y}
&=3b_1y^3\tag{E(s-1).1}\\
T_{1}(p_{n-2})-\frac{\partial p_{n-4}}{\partial y}
&=-2b_1yz\tag{E(s-1).2}\\
-\frac{\partial p_{n-3}}{\partial y}
&=0\tag{E(s-1).3}\\
-\frac{\partial p_{n-2}}{\partial y}&=0\tag{Es}.
\end{align*}

As in {\bf Claim 4}, we know $p_i\in \kk[y^2,z]_{n-1-i}$. 
Assign the lexicographic order with $y>z$ on all monomials in $\kk[y,z]$. We 
prove the following statement inductively for all 
$0\le i\le s-2$ with $n=3s$:
\begin{align*}
p_{3i}&=-\frac{3}{2}b_{s-i}(y^2)^{s-i}z^{s-i-1}
       +\text{lower terms in $\kk[y^2,z]_{n-3i-1}$}\\
p_{3i+1}&=\alpha_i(y^2)^{s-i-1}z^{s-i}
        +\text{lower terms in $\kk[y^2,z]_{n-3i-2}$}\\
p_{3i+2}&=\beta_i(y^2)^{s-i-1}z^{s-i-1}
        +\text{lower terms in $\kk[y^2,z]_{n-3i-3}$}
\end{align*}
for some $\alpha_i,\beta_i\in \kk$ and $(i+1)b_{s-i-1}=(s-i)b_{s-i}$. 
When $i=0$, we use (E0.1) and (E0.2) to get 
$$p_0=-\frac{3}{2}b_sy^{2s}z^{s-1},\quad p_1=-sb_sy^{2s-2}z^s$$
since  $T_{n-1},T_{n-2}$ are injective.  So (E0.3) implies that 
$$
T_{n-3}(p_2)=3b_{s-1}y^{2s-1}z^{s-1}+\frac{\partial p_0}{\partial y}
=3(b_{s-1}-sb_s)y^{2s-1}z^{s-1}.
$$
Since $3\mid n-3$, we have ${\rm ker}(T_{n-3})=\kk y^{2s-2}z^{s-1}$ 
and $y^{2s-1}z^{s-1}\not\in {\rm img}(T_{n-3})$. We get $b_{s-1}=sb_s$ 
and $p_2=\beta_0y^{2s-2}z^{s-1}$ for some $\beta_0\in \kk$. Suppose 
the statement holds for $p_{3i},p_{3i+1},p_{3i+2}$. Then (E(i+1).1) 
implies that 
\begin{align*}
&T_{n-3i-4}(p_{3i+3})\\
=\,&3b_{s-i-1}y^{2s-2i-1}z^{s-i-2}+\frac{\partial }{\partial y}
 \left(\alpha_i(y^2)^{s-i-1}z^{s-i}
 +\text{lower terms in $\kk[y^2,z]_{n-3i-2}$}\right)\\
=\,&3b_{s-i-1}y^{2s-2i-1}z^{s-i-2}
 +\text{lower terms in $y(\kk[y^2,z]_{n-3i-4})$}.
\end{align*}
Since $T_{n-3i-4}$ is injective, we get 
\begin{align*}
p_{3i+3}=-\frac{3}{2}b_{s-i-1}(y^2)^{s-i-1}z^{s-i-2}
        +\text{lower terms in $\kk[y^2,z]_{n-3i-4}$}
\end{align*}
Similarly from (E(i+1).2) we get
\begin{align*}
p_{3i+4}=\alpha_{i+1}(y^2)^{s-i-2}z^{s-i-1}
        +\text{lower terms in $\kk[y^2,z]_{n-3i-5}$}
\end{align*}
for some $\alpha_{i+1}\in \kk$. Finally, (E(i+1).3) implies that 
\begin{align*}
T_{n-3i-6}(p_{3i+5})=&\,(3s-3(s-i-2))b_{s-i-2}y^{2s-2i-3}z^{s-i-2}\\
&\,+\frac{\partial }{\partial y}(-\frac{3}{2}b_{s-i-1}
(y^2)^{s-i-1}z^{s-i-2}+\text{lower terms in $\kk[y^2,z]_{n-3i-4}$})\\
=&\,3((i+2)b_{s-i-2}-(s-i-1)b_{s-i-1})y^{2s-2i-3}z^{s-i-2}\\
  &\,+\text{lower terms in $y(\kk[y^2,z]_{n-3i-6})$}
\end{align*}
Note that ${\rm ker}(T_{n-3i-6})=\kk y^{2s-2i-4}z^{s-i-2}$ and 
$y^{2s-2i-3}z^{s-i-2}\not\in {\rm img}(T_{n-3i-6})$ for 
$3\mid n-3i-6$. So we get $(i+2)b_{s-i-2}=(s-i-1)b_{s-i-1}$ and 
we can write  
$$
p_{3i+5}=\beta_{i+1}(y^2)^{s-i-2}z^{s-i-2}
        +\text{lower terms in $\kk[y^2,z]_{n-3i-6}$}
$$
for some $\beta_{i+1}\in \kk$. This completes our induction argument. 
From the above result, we have
$$
p_{n-5}=p_{3(s-2)+1}=\alpha_{s-2}y^2z^2+\text{lower terms in $\kk[y^2,z]_4$}
$$
From (E(s-1).1): $T_2(p_{n-3})-\frac{\partial p_{n-5}}{\partial y}=3b_1y^3$, 
we get $p_{n-3}=-\frac{3}{2}b_1y^2+\lambda z^2$ for some $\lambda \in\kk$. 
Moreover, (E(s-1).3): $-\frac{\partial p_{n-3}}{\partial y}=0$ implies 
that $b_1=0$. Again from the above statement, we have all $b_i=0$ and $g=0$. 

Finally, we can show that $\phi$ is Hamiltonian.  By all the above claims, 
up to a proper Hamiltonian derivation, we can take a Poisson derivation 
$\phi$ of degree $n$ as
$$
\phi(x)=0,\quad \phi(y)=(x^2+y^2)p,\quad  \phi(y)=-2yzp
$$
for some $p\in \kk[x,y,z]_{n-1}$. From 
$\{\phi(x),y\}+\{x,\phi(y)\}=\phi(x^2+y^2)$, we get $\{p,x\}=0$ 
or $(x^2+y^2)p_y=2yzp_z$. We show that $p=p(x,\Omega)$ by 
induction on the degree of $f$. It is clear that we can write 
$p_y=2yzq$ and $p_z=(x^2+y^2)q$ for some $q\in \kk[x,y,z]$ of 
degree ${\rm deg}(p)-3$. Then $p_{zy}=p_{yz}$ implies that 
$(x^2+y^2)q_y=2yzq_z$. So our induction hypothesis implies 
that $q=q(x,\Omega)$. Take any polynomial $h(x,\Omega)$ such 
that $\frac{\partial h(x,\Omega)}{\partial \Omega}=q$. An easy 
calculation shows that $h_y=p_y$ and $h_z=p_z$. So $p-h\in \kk[x]$. 
This proves our claim. Now take any $Q(x,\Omega)$ such that 
$\frac{\partial Q(x,\Omega)}{\partial x}=p(x,\Omega)$. 
Then one checks that $\phi=\{Q,-\}$ and $\phi$ is Hamiltonian.
\end{proof}

\begin{remark}
\label{xxrem6.11}
If $A$ is a non-unimodular quadratic Poisson polynomial algebra
$\Bbbk[x_1,\cdots,x_n]$, then by Corollary \ref{xxcor0.3},
$A^\delta$ is unimodular for some graded Poisson derivation 
$\delta$ of $A$ (in fact $\delta=\frac{1}{\sum_i \deg x_i} 
{\mathbf m}$). By \eqref{E4.3.1}, $rgt(A)=rgt(A^{\delta})$.
If one can calculate $rgt$ for all unimodular quadratic Poisson 
structures on $\Bbbk[x_1,\cdots,x_n]$, then the above formula 
provides a way of computing $rgt(A)$ when $A$ is not unimodular.

Note that all 13 classes of non-unimodular quadratic Poisson 
structures on $\Bbbk[x_1,x_2,x_3]$ were listed explicitly in 
\cite{DH} (also see \cite{DML, LX}). For each class, the 
modular derivation ${\mathbf m}$ is easy to compute. Therefore 
$rgt$ can be calculated by the method mentioned in the above
paragraph.
\end{remark}

\section{Rigidity, $H$-ozoneness, and $PH^1$-minimality}
\label{xxsec7}

In this section we will study some connections between rigidity 
of graded twisting, ozone derivations and the first Poisson 
cohomology.

Let $A$ be a general Poisson algebra with Poisson center $Z$.
Let $Pd(A)$ be the Lie algebra of all Poisson derivations 
of $A$ and let $Hd(A)$ be the Lie ideal of $Pd(A)$ of all 
Hamiltonian derivations. Recall from \eqref{E1.5.4} that the 
first Poisson cohomology of $A$ is defined to be
\begin{equation}
\label{E7.0.1}\tag{E7.0.1}
PH^1(A):=Pd(A)/Hd(A).
\end{equation}
If $A$ is ${\mathbb Z}$-graded, then so is $PH^1(A)$.
Part (1) of the following definition is Definition 
\ref{xxdef6.8}.

\begin{definition}
\label{xxdef7.1}
Let $A$ be a Poisson algebra.
\begin{enumerate}
\item[(1)]
A Poisson derivation $\phi$ of $A$ is called {\it ozone} 
if $\phi(z)=0$ for all $z\in Z$. 
\item[(2)]
Let $Od(A)$ denote the Lie algebra of all ozone Poisson 
derivations of $A$.
\item[(3)]
We say $A$ is {\it $H$-ozone} if $Od(A)=Hd(A)$, namely, 
if every ozone Poisson derivation is Hamiltonian.
\end{enumerate}
\end{definition}

It is clear that $Od(A)$ is a Lie ideal of $Pd(A)$ and 
$$Hd(A)\subseteq Od(A)\subseteq Pd(A).$$
In general, not every ozone Poisson derivation is 
Hamiltonian.

For the rest of this section, we only consider locally finite 
connected ${\mathbb N}$-graded Poisson algebras $A$ with $A_i\neq 0$
for some $i>0$. And later we will only consider $A=\Bbbk[x,y,z]$ 
where $\deg (x)=\deg (y)=\deg (z)=1$. In this case the Euler 
derivation, denoted by $E$, sending $a\to (\deg a)a(=:|a|a)$, is 
a nonzero Poisson derivation.

\begin{lemma}
\label{xxlem7.2}
Let $A$ be a connected graded Poisson algebra with center $Z$. 
Suppose $Z$ is a domain. Then $ZE\cap Od(A)=0$ if $Z\neq \Bbbk$. As a 
consequence, $ZE\cap Hd(A)=0$ and the canonical map 
$ZE\to PH^1(A)$ is a graded injective $Z$-module map.
\end{lemma}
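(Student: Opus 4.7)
The plan is straightforward once the relevant gradings are untangled. First, I observe that the Poisson center $Z$ is a graded subalgebra with $Z_0=\Bbbk$ (since $A_0=\Bbbk$) and that $Od(A)$ is graded: if $\phi=\sum_n\phi_n$ with $\phi_n$ homogeneous of degree $n$ and $w\in Z$ is any homogeneous central element, then the equation $\sum_n \phi_n(w)=0$ has summands of distinct degrees, so each $\phi_n(w)=0$. Likewise $ZE$ is graded, with $(zE)$ homogeneous of degree $|z|$ whenever $z$ is homogeneous, because $E$ has degree zero. Therefore to prove $ZE\cap Od(A)=0$ it suffices to show that every homogeneous $z\in Z$ with $zE$ ozone is zero.

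Second, suppose such a homogeneous $z$ is given. Using $Z\neq \Bbbk$, I pick a nonzero homogeneous $w\in Z$ of positive degree $n$, and compute $(zE)(w)=z(nw)=nzw$. The ozone assumption gives $nzw=0$, so $zw=0$ since $\ch\Bbbk=0$; as $Z$ is a domain and $w\neq 0$, this forces $z=0$, and hence $zE=0$ as a derivation. This establishes $ZE\cap Od(A)=0$.

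The consequences follow immediately. Hamiltonian derivations annihilate the Poisson center by definition, so $Hd(A)\subseteq Od(A)$, giving $ZE\cap Hd(A)=0$. The canonical map $ZE\to PH^1(A)$, obtained by composing the inclusion $ZE\subseteq Pd(A)$ with the quotient by $Hd(A)$, therefore has trivial kernel. That it is $Z$-linear and grading-preserving follows because $E$ is a homogeneous Poisson derivation of degree zero, and multiplication by a central element on $Pd(A)$ descends to the natural $Z$-action on $PH^1(A)$ (with $Hd(A)$ stable since $z\,H_a=H_{za}$ for $z\in Z$). I do not anticipate any real obstacle here; the substantive content is simply the observation that the Euler derivation acts by a nonzero scalar on any positive-degree element of the center, combined with the domain hypothesis on $Z$ to cancel the test element $w$.
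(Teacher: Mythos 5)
Your argument is correct and is essentially the paper's: the key step in both is that for a nonzero homogeneous $w\in Z$ of positive degree, $(zE)(w)=(\deg w)\,zw$, so ozoneness plus $\ch\Bbbk=0$ and the domain hypothesis on $Z$ force $z=0$; your preliminary reduction to homogeneous components of $Od(A)$ and $ZE$ is harmless but not needed, since the paper simply tests an arbitrary $fE$ against one homogeneous central element of positive degree. The only point you miss is the case $Z=\Bbbk$: the lemma asserts $ZE\cap Hd(A)=0$ and the injectivity of $ZE\to PH^1(A)$ unconditionally, but your deduction of these routes through $ZE\cap Od(A)=0$, which you (and the statement) only have for $Z\neq\Bbbk$ (indeed $E$ itself is ozone when $Z=\Bbbk$). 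The paper closes this in one line: when $Z=\Bbbk$ one only needs $E\notin Hd(A)$, which holds because a degree-zero Hamiltonian derivation is $H_a$ with $a\in A_0=\Bbbk$, hence zero, while $E\neq 0$ under the standing assumption $A_i\neq 0$ for some $i>0$. Adding that sentence makes your proof complete.
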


\begin{proof} Let $f$ be any homogeneous element in $Z$.
It is easy to check that $fE$ is a Poisson derivation.
So $ZE$ is an abelian Lie subalgebra of $Pd(A)$. Moreover, 
one can check that $Pd(A)$ is a $Z$-module.

Next we assume that $Z\neq \Bbbk$. Let $\phi$ be in 
$ZE\cap Od(A)$ and we 
can write is as $\phi=fE$ for some $f\in Z$. Let $z\in Z$ be 
a nonzero element of positive degree. Then $\phi(z)=0$ as 
$\phi\in Od(A)$. Since $\phi=fE$, we obtain that 
$0=f(\deg z) z$. This implies that $f=0$ or $\phi=0$. Hence 
$ZE\cap Od(A)=0$.

Since $Hd(A)\subseteq Od(A)$, $ZE\cap Hd(A)=0$. So the map 
$$ZE\to Pd(A)/Hd(A)=:PH^1(A)$$ 
is injective.

Finally if $Z=\Bbbk$, then it is trivial since $E\not\in Hd(A)$. 
\end{proof}

By the above lemma, the minimal possibility of $PH^1(A)$ is $ZE$. 
This motivates the following definition.

\begin{definition}
\label{xxdef7.3}
Let $A$ be a nontrivial connected graded Poisson algebra with 
Poisson center $Z$. Suppose $Z$ is a domain. 
\begin{enumerate}
\item[(1)]
We say A is $PH^1$-minimal if $PH^1(A) \cong ZE$.
\item[(2)]
We say $A$ has an {\it Euler-ozone decomposition} if 
$$Pd(A)=ZE \rtimes Od(A).$$
\item[(3)]
We say $A$ has an {\it Euler-Hamiltonian decomposition} if 
$$Pd(A)=ZE \rtimes Hd(A).$$
\end{enumerate}
\end{definition}

By Remark \ref{xxrem5.3}, if $A$ is the Poisson polynomial 
algebra corresponding to the $4$-dimensional (resp. $3$-dimensional) 
Sklyanin algebra, then it is $PH^1$-minimal. 
Note that the dimension of $Gpd(A)$ is the constant term of 
$h_{Pd(A)}(t)$. So $A$ is rigid of graded twisting if 
and only if the constant term of $h_{Pd(A)}(t)$ is $1$
[Remark \ref{xxrem5.2}]. Therefore we have
\begin{equation}
\label{E7.3.1}\tag{E7.3.1}
{\text{$A$ is $PH^1$-minimal}} \quad
\Rightarrow \quad {\text{$rgt(A)=0$}}.
\end{equation}

\begin{proposition}
\label{xxpro7.4} 
Let $A$ be a connected graded Poisson algebra. 
Then the following are equivalent.
\begin{enumerate}
\item[(i)]
$A$ is $PH^1$-minimal.
\item[(ii)]
$h_{PH^1(A)}(t)=h_Z(t)$ provided $Z$ is a domain.
\item[(iii)]
$h_{Pd(A)}(t)=h_A(t)$.
\end{enumerate}
\end{proposition}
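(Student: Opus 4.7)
The plan is to relate the three conditions via two short exact sequences of graded vector spaces, together with the injection from Lemma \ref{xxlem7.2}.

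First I would establish the identity
$$h_{Pd(A)}(t) = h_A(t) - h_Z(t) + h_{PH^1(A)}(t).$$
The Hamiltonian map $A \to Hd(A)$, $a \mapsto H_a = \{a,-\}$, is a surjective graded $\Bbbk$-linear map of degree $0$ whose kernel is precisely the Poisson center $Z$. This gives a short exact sequence $0 \to Z \to A \to Hd(A) \to 0$, so $h_{Hd(A)}(t) = h_A(t) - h_Z(t)$. Combined with the defining short exact sequence $0 \to Hd(A) \to Pd(A) \to PH^1(A) \to 0$ and additivity of Hilbert series (well-defined since $A$ is locally finite), the displayed identity follows.

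From this identity, the equivalence (ii)$\Leftrightarrow$(iii) is immediate: each condition is equivalent to $h_{PH^1(A)}(t) = h_Z(t)$. (Note that (ii) presupposes $Z$ is a domain so that $ZE$ and hence Definition \ref{xxdef7.3}(1) make sense, while (iii) does not need this hypothesis; the equivalence goes through regardless.)

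For (i)$\Leftrightarrow$(ii), I would observe that multiplication by the Euler derivation $E$ gives a graded $\Bbbk$-linear isomorphism $Z \xrightarrow{\sim} ZE$ of degree $0$, so $h_{ZE}(t) = h_Z(t)$. The implication (i)$\Rightarrow$(ii) is then trivial. Conversely, Lemma \ref{xxlem7.2} provides a graded injective $Z$-linear map $ZE \hookrightarrow PH^1(A)$; if $h_{ZE}(t) = h_{PH^1(A)}(t)$, then this injection is a degreewise isomorphism of finite-dimensional vector spaces, hence an isomorphism of graded vector spaces. I foresee no substantive obstacle; the entire argument is a routine bookkeeping computation once the two short exact sequences and the injection from Lemma \ref{xxlem7.2} are in hand.
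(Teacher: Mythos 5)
Your proposal is correct and follows essentially the same route as the paper: the equivalence (ii)$\Leftrightarrow$(iii) is proved there by exactly the same Hilbert-series bookkeeping with the surjection $A\to Hd(A)$ (kernel $Z$) and the quotient $PH^1(A)=Pd(A)/Hd(A)$. Your treatment of (i)$\Leftrightarrow$(ii), using $h_{ZE}(t)=h_Z(t)$ and upgrading the injection of Lemma \ref{xxlem7.2} to an isomorphism by local finiteness, is just a careful spelling-out of what the paper records as ``follows from the definition.''
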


\begin{proof} 
(i) $\Leftrightarrow$ (ii) Follows from the definition.

(ii) $\Leftrightarrow$ (iii) It is clear that the map
$A\to Hd(A)$ sending $a\to H_a$ is surjective. The 
kernel is the center $Z$. So 
$h_{Hd(A)}(t)=h_{A}(t)-h_{Z}(t)$. By \eqref{E7.0.1},
$h_{PH^1(A)}(t)=h_{Pd(A)}(t)-h_{Hd(A)}(t)$.
Therefore
$$\begin{aligned}
h_{Pd(A)}(t)-h_A(t)
&=h_{PH^1(A)}(t)+h_{Hd(A)}(t)-h_A(t)\\
&=h_{PH^1(A)}(t)+h_{A}(t)-h_{Z}(t)-h_A(t)\\
&=h_{PH^1(A)}(t)-h_{Z}(t).
\end{aligned}
$$
The assertion follows.
\end{proof}

Let $A$ be the Poisson algebra in Example \ref{xxex6.6}(Case 3).
This Poisson algebra is corresponding to the $3$-dimensional
Sklyanin algebra. The potential $\Omega$ has isolated 
singularity. By \cite[Proposition 4.5]{Pi1} (and 
\cite[Theorem 5.1]{VdB}), $PH^1(A)=ZE$, consequently, $A$ 
is $PH^1$-minimal.

\begin{lemma}
\label{xxlem7.5}
Let $A$ be a connected graded Poisson algebra. Assume 
that $Z$ is a non-trivial domain. If $A$
is $PH^1$-minimal, then $A$ is $H$-ozone and has an 
Euler-Hamiltonian decomposition.
\end{lemma}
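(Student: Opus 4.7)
The plan is to extract a vector-space splitting of $Pd(A)$ directly from the $PH^1$-minimality hypothesis, then promote it to a Lie-algebra semidirect product and deduce $H$-ozoneness as a consequence. By Lemma \ref{xxlem7.2}, the canonical map $\iota: ZE \to PH^1(A)$ is a graded injective $Z$-module homomorphism. Since $A$ is $PH^1$-minimal we have $PH^1(A) \cong ZE$ as graded vector spaces, so both graded spaces share the same Hilbert series. An injective graded map between locally finite graded spaces of equal Hilbert series is forced to be a graded isomorphism, so $\iota$ is an isomorphism. Unpacking this means every $\phi \in Pd(A)$ admits a decomposition $\phi = fE + H_a$ with $fE \in ZE$ and $H_a \in Hd(A)$, and that this $fE$ component is unique (since $ZE \cap Hd(A) = 0$ by Lemma \ref{xxlem7.2}). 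Thus $Pd(A) = ZE \oplus Hd(A)$ as graded vector spaces.

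To upgrade this to an Euler-Hamiltonian decomposition $Pd(A) = ZE \rtimes Hd(A)$, I note that $Hd(A)$ is a Lie ideal of $Pd(A)$ automatically, so it suffices to check $ZE$ is closed under the Lie bracket of derivations. A short computation on a homogeneous element $a \in A$ gives
\[
[fE, gE](a) = fE(|a| ga) - gE(|a| fa) = |a|\bigl((|g|+|a|) - (|f|+|a|)\bigr) fg\, a = |a|(|g|-|f|) fg\, a,
\]
so $[fE, gE] = fg(|g|-|f|)E \in ZE$ for homogeneous $f, g \in Z$. This confirms the vector-space splitting is a semidirect product of Lie algebras.

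For $H$-ozoneness, take any $\phi \in Od(A)$ and write it as $\phi = fE + H_a$ using the decomposition just obtained. For any homogeneous $z \in Z$, $H_a(z) = \{a,z\} = 0$ since $z$ is Poisson central, hence $\phi(z) = f|z|z$. Because $Z$ is a non-trivial domain (so $Z \neq \Bbbk$), we may pick a nonzero homogeneous $z_0 \in Z$ of positive degree; then $\phi(z_0) = 0$ together with $|z_0|\neq 0$ and $z_0 \neq 0$ in the domain $Z$ forces $f = 0$. Thus $\phi = H_a \in Hd(A)$, proving $Od(A) \subseteq Hd(A)$, and the reverse inclusion is automatic.

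There is no substantive obstacle: once the injective map of Lemma \ref{xxlem7.2} is forced to be surjective by Hilbert-series comparison, everything else is formal. The only minor bookkeeping is verifying $[ZE,ZE] \subseteq ZE$ so that the direct sum is a semidirect product rather than merely a vector-space splitting, and recording that the nontriviality of $Z$ is exactly what provides the test element $z_0$ used in the $H$-ozone step.
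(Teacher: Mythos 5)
Your proof is correct and follows essentially the same route as the paper: deduce $Pd(A)=ZE\rtimes Hd(A)$ from $PH^1$-minimality, then write an ozone derivation as $fE+H_a$ and evaluate on a nonzero central element of positive degree to force $f=0$. The only difference is that you spell out details the paper leaves implicit, namely that the canonical injection $ZE\to PH^1(A)$ of Lemma \ref{xxlem7.2} becomes an isomorphism by comparing Hilbert series and that $[ZE,ZE]\subseteq ZE$, which is a welcome but not essentially different refinement.
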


\begin{proof} Since $A$ is $PH^1$-minimal, 
$ZE\cong PH^1(A)=Pd(A)/Hd(A)$. This implies that 
$Pd(A)=ZE\rtimes Hd(A)$. So $A$ has an Euler-Hamiltonian 
decomposition. 

For a graded Poisson derivation $\phi$ of degree $d$, 
by the Euler-Hamiltonian decomposition, we have
$$\phi=fE+ H_a.$$
Let $z$ be a nonzero central element of positive degree. Then 
$\phi(z)=|z|fz+H_a(z)=|z|fz$. If $\phi$ is ozone, $0=\phi(z)=|z|fz$
which implies that $f=0$ and $\phi=H_a$ as required.
\end{proof}

\begin{lemma}
\label{xxlem7.6}
Let $A$ be a connected graded Poisson domain. Suppose $A$ is
$H$-ozone.
\begin{enumerate}
\item[(1)]
Every Poisson normal element in $A$ is Poisson 
central.
\item[(2)]
Suppose $A$ is a Poisson polynomial ring. Then 
$A$ is unimodular.
\end{enumerate}
\end{lemma}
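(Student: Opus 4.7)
The plan is to handle both parts by the same underlying strategy: produce an ozone Poisson derivation from the hypothesis, invoke $H$-ozoneness to replace it by a Hamiltonian derivation $H_c$, and then exploit the connected grading to force the Hamiltonian element $c$ into $A_0 = \Bbbk$, where $H_c$ vanishes automatically.

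For part (1), I would first reduce to the case of a nonzero homogeneous Poisson normal element $f$ (observing that, if $f = f_s + \cdots + f_t$ has top component $f_t$, then matching top-degree terms in $\{f,a\} \in fA$ for homogeneous $a$ forces $\{f_t, a\} \in f_t A$, so the top component is also Poisson normal; one then iterates). Given such an $f$, I would introduce the log-Hamiltonian derivation $LH_f := f^{-1}\{f,-\}$, which is well defined since $A$ is a domain and $\{f, A\} \subseteq fA$. A direct computation using the Jacobi identity shows that $LH_f$ is a graded Poisson derivation of degree $0$. For any $z \in Z$ we have $\{f, z\} = 0$, hence $LH_f(z) = 0$, i.e., $LH_f \in Od(A)$. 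The $H$-ozone hypothesis then yields $LH_f = H_c$ for some $c \in A$. Writing $c = \sum_i c_i$ in homogeneous components gives $H_c = \sum_i H_{c_i}$ with $H_{c_i}$ of degree $i$; matching degrees forces $LH_f = H_{c_0}$ with $c_0 \in A_0 = \Bbbk$ (by connectedness), and then $H_{c_0} = 0$. Hence $LH_f = 0$, so $\{f,a\} = 0$ for all $a$ and $f$ is Poisson central.

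For part (2), the modular derivation $\mathbf m$ of the Poisson polynomial ring $A$ is well defined (independent of the volume form, as noted after Definition~\ref{xxdef1.4}) and is a graded Poisson derivation of degree $0$. Since $H_z = 0$ for every $z \in Z$, we get $\mathbf{m}(z) = -\divv(H_z) = 0$, so $\mathbf m \in Od(A)$. The $H$-ozone hypothesis provides $c \in A$ with $\mathbf m = H_c$, and the same homogeneous decomposition argument from part (1) forces $H_c = H_{c_0}$ with $c_0 \in \Bbbk$, so $H_c = 0$ and $\mathbf m = 0$. This is exactly the statement that $A$ is unimodular.

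The main obstacle I anticipate is the reduction to the homogeneous case in part (1), since the definition of $H$-ozoneness is not a priori graded; the rest of the argument is essentially bookkeeping with degrees once the log-Hamiltonian derivation $LH_f$ (respectively the modular derivation $\mathbf m$) has been recognized as an ozone Poisson derivation of degree $0$.
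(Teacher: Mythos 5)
Your argument is, at its core, exactly the paper's proof: for (1) you pass to a homogeneous Poisson normal element $f$, observe that the log-Hamiltonian derivation $LH_f=f^{-1}\{f,-\}$ is an ozone Poisson derivation of degree $0$, and use $H$-ozoneness plus connectedness to force $LH_f=H_{c_0}$ with $c_0\in A_0=\Bbbk$, hence $LH_f=0$; for (2) you run the same degree argument on the modular derivation, which is ozone because $\mathbf{m}(z)=-\divv(H_z)=0$ for central $z$. This coincides step for step with the published proof.

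The one place you go beyond the paper is the reduction to homogeneous normal elements, which the paper simply asserts (``it is the sum of homogeneous Poisson normal elements''). Your top-component step is fine: comparing the top-degree components in $\{f,a\}=fb$ for homogeneous $a$ does give $\{f_t,a\}\in f_tA$. But ``one then iterates'' does not work as literally stated: once $f_t$ is known to be Poisson central, $\{f-f_t,a\}=\{f,a\}\in fA$, which is not obviously contained in $(f-f_t)A$, so the remainder need not be Poisson normal and you cannot just strip off the top component and repeat. A way to finish inside your framework: with $f_t$ central, comparing the next graded components of $\{f,a\}=fb$ gives $\{f_{t-1},A\}\subseteq f_tA$; then $f_t^{-1}H_{f_{t-1}}$ is an ozone Poisson derivation of degree $-1$, so $H$-ozoneness together with connectedness (there are no Hamiltonian derivations of negative degree, since $A_{<0}=0$) forces it to vanish, making $f_{t-1}$ central, and an induction of this kind handles all lower components. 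Since the paper's own proof leaves this reduction unjustified, this is a refinement of, not a departure from, its approach; the rest of your proposal is correct as written.
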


\begin{proof}
(1) Let $x$ be a nonzero Poisson normal element. Then it is 
the sum of homogeneous Poisson normal elements. So we can 
assume that $x$ is homogeneous. Let $\phi$ be the log-Hamilton 
derivation $x^{-1}H_x$. Since $H_x(z)=0$ for all $z$ in the 
center $Z$, $\phi$ is ozone. By the hypothesis, $\phi$ is Hamiltonian, namely, $\phi=H_y$
for some element $y$. Since $\phi$ has degree 0, $\deg y=0$ (or 
$y\in\Bbbk$) and consequently, $\phi=0$. This implies that $x$ is
central.

(2) Let ${\mathbf m}$ be the modular derivation of $A$. It follows
from the definition that it is ozone. Since $\deg {\mathbf m}=0$, 
by the hypothesis, ${\mathbf m}=H_y$ for some element $y$ of degree 0. 
Hence $y\in\Bbbk$ and consequently, ${\mathbf m}=0$. The assertion 
follows.
\end{proof}

We have proved the following diagram

$$\begin{CD}
{\text{$A$ is $PH^1$-minimal}}
@>{\text{Lemma \ref{xxlem7.5}}}>>
{\text{$A$ is $H$-ozone}}\\
@V\eqref{E7.3.1} VV @VV {\text{Lemma \ref{xxlem7.6}(2)}}  V \\
{\text{$rgt(A)=0$}}
@>>{\text{Lemma \ref{xxlem4.4}(3)}}> 
{\text{$A$ is unimodular}}
\end{CD}
$$

Next we show that some of the conditions are equivalent 
under extra hypotheses.

\begin{lemma}
\label{xxlem7.7}
Suppose $A$ is a connected graded Poisson algebra. Let $Z$ be 
the center of $A$ and assume that $Z=\Bbbk[z]$ where $z$ is homogeneous with $\deg z>0$.
\begin{enumerate}
\item[(1)]
If $Pd(A)_{\leq -1}=0$, $A$ has an Euler-ozone decomposition.
\item[(2)]
Suppose $A$ is $H$-ozone. Then $rgt(A)=0$.
\end{enumerate}
\end{lemma}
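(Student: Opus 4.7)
The plan divides between parts (1) and (2). For part (1), I exploit that Poisson derivations preserve the center: given $\phi\in Pd(A)$ homogeneous of degree $d\ge 0$ (by the hypothesis $Pd(A)_{\le-1}=0$), applying $\phi$ to $\{y,a\}=0$ for $y\in Z$ yields $\{\phi(y),a\}=0$, so $\phi(Z)\subseteq Z$. Thus $\phi(z)\in Z_{d+\deg z}$; since $Z=\Bbbk[z]$ with $\deg z>0$, either $\phi(z)=0$ or $\phi(z)=cz^n$ with $n=1+d/\deg z\ge1$. Setting $f:=(c/\deg z)z^{n-1}\in Z$ gives $(fE)(z)=\phi(z)$, so $\phi-fE$ kills $z$, hence (as a derivation) all of $Z=\Bbbk[z]$, and lies in $Od(A)$. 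Uniqueness: $fE\in Od(A)$ forces $f\cdot\deg z\cdot z=0$, so $f=0$. The semidirect product structure follows from the standard check that $Od(A)$ is a Lie ideal: for $\phi_1\in Od(A)$, $\phi_2\in Pd(A)$ and $z'\in Z$, $[\phi_1,\phi_2](z')=\phi_1(\phi_2(z'))=0$ since $\phi_2(z')\in Z$.

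For part (2), the goal is $Gspd(A)=\Bbbk E$, giving $rgt(A)=0$. Step A: show $Gpd(A)=\Bbbk E$. The degree-zero version of part (1)'s argument (which does not require the hypothesis $Pd(A)_{\le-1}=0$) decomposes any $\phi\in Gpd(A)=Pd(A)_0$ as $cE+\phi'$ with $\phi'\in Od(A)_0$. By the $H$-ozone hypothesis, $\phi'\in Hd(A)_0=\{H_a:a\in A_0\}=\{0\}$ since $A$ is connected graded ($A_0=\Bbbk$ and Hamiltonians of constants vanish).

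Step B (the main obstacle): show $Gspd(A)=Gpd(A)$. For $\delta\in Gspd(A)$ of degree $0$, substituting $c=z$ in $p(\{-,-\},\delta;a,b,c)=0$ and using centrality of $z$ yields the bivector identity
\[
|z|z\cdot \widetilde R\;=\;-\,E\wedge H_{\delta(z)},
\]
where $\widetilde R(a,b):=\delta(\{a,b\})-\{\delta(a),b\}-\{a,\delta(b)\}$. The plan is to combine this with the semi-Poisson condition $E\wedge \widetilde R=0$ to deduce that $H_{\delta(z)}$ is an $A$-multiple of $E$; evaluating at $z$ (where $H_{\delta(z)}(z)=0$ by centrality, but $E(z)=|z|z\ne0$) then forces $H_{\delta(z)}=0$, i.e., $\delta(z)\in Z$. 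Writing $\delta(z)=\lambda z$ and subtracting $(\lambda/\deg z)E$ from $\delta$ reduces to $\delta(z)=0$; the displayed identity then gives $\widetilde R=0$, so $\delta$ is Poisson and $\delta\in Gpd(A)=\Bbbk E$ by Step A. The factorization $H_{\delta(z)}\in A\cdot E$ from the divisibility identity is the main technical hurdle and requires some regularity of $A$; in the polynomial-ring case it can be bypassed entirely since Lemma \ref{xxlem7.6}(2) gives $A$ unimodular and Lemma \ref{xxlem4.1} then gives $Gspd(A)=Gpd(A)$ directly.
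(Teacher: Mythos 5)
Your argument is correct in the setting where the lemma is actually applied, and it largely mirrors the paper's own proof: part (1) is the same computation (Poisson derivations preserve $Z$, the hypothesis $Pd(A)_{\leq -1}=0$ forces nonnegative degree, and subtracting $\tfrac{c}{|z|}z^{n-1}E$ leaves an ozone derivation — the paper's two cases $\deg z\mid d$ and $\deg z\nmid d$ are exactly your dichotomy $\phi(z)=cz^{n}$ versus $\phi(z)=0$), and your Step A in part (2) is precisely the paper's proof that $Gpd(A)=\Bbbk E$. The only real divergence is the passage from $Gpd$ to $Gspd$ (note $rgt$ is defined via $Gspd$): the paper disposes of it by citing Lemma \ref{xxlem4.4}(4), whose proof splits into the unimodular case (Lemma \ref{xxlem4.1} gives $Gspd=Gpd$) and the non-unimodular case (where $E$ and ${\mathbf m}$ already make $\dim Gpd\geq 2$), whereas you use the $H$-ozone hypothesis via Lemma \ref{xxlem7.6}(2) to exclude non-unimodularity and then apply Lemma \ref{xxlem4.1}; both routes require $A$ to be a polynomial ring (Lemma \ref{xxlem4.4}(4) is only stated in that setting), so your fallback is on equal footing with the paper, and you are in fact more explicit about where the polynomial hypothesis enters. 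Be aware, though, that your Step B does not work as a general substitute: the identity $|z|z\,\widetilde{R}=-E\wedge H_{\delta(z)}$, with $\widetilde{R}(a,b)=\delta(\{a,b\})-\{\delta(a),b\}-\{a,\delta(b)\}$, is itself just the semi-Poisson condition $E\wedge\widetilde{R}=0$ evaluated at $c=z$, so combining the two (for instance by wedging with $E$, or by substituting $b=z$) returns a tautology and does not yield $H_{\delta(z)}\in A\cdot E$; since you flag this and bypass it by the valid polynomial-case route, the proposal as a whole stands.
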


\begin{proof} (1) Let $\phi$ be a Poisson derivation of $A$
of degree $i$. By the hypothesis, $i\geq 0$. 

Case 1: $\deg (z)\mid i$. Since $\phi(z)$ is central, $\phi(z)
=a z^n$ for some $a\in \Bbbk$ and $n\geq 0$. Then 
$\phi':=\phi-\frac{a}{|z|}z^{n-1} E$ satisfies $\phi'(z)=0$. So $\phi'$ 
is ozone. Therefore $\phi=\frac{a}{|z|}z^{n-1}E+\phi'$.

Case 2: $\deg (z)\nmid i$. Since $\phi(z)$ is central,
it must be $0$. Therefore $\phi$ is ozone.

Combining these two cases, every Poisson derivation is 
the sum of $fE$ for some $f\in Z$ and an ozone derivation.

(2) By Lemma \ref{xxlem4.4}(4), it suffices to show that
$\dim_{\Bbbk} Gpd(A)=1$, or equivalently, $Gpd(A)=\Bbbk E$.
Let $\phi\in Gpd(A)$ and $\phi(z)=az$ for some $a\in \Bbbk$.
Let $\delta$ be $\phi-\frac{a}{|z|}E$. Then $\delta\in Gpd(A)$ is 
ozone. By the hypothesis, $\delta$ is Hamiltonian, say
$\delta=H_f$ for some homogeneous element $f\in A$. Since 
$\deg (\delta)=0$, $\deg(f)=0$. Since $A$ is connected graded,
$H_f=0$ and consequently, $\delta=0$. Thus $\phi=\frac{a}{|z|}E$ and 
$Gpd(A)=\Bbbk E$.
\end{proof}

Now we are ready to prove Theorem \ref{xxthm0.6}.

\begin{proof}[Proof of Theorem \ref{xxthm0.6}]
(1) $\Rightarrow$ (2): Since $rgt(A)=0$, every 
Poisson derivation $\delta$ is of the form $cE$. 
Then $E\wedge \delta=0$. By \eqref{E2.3.1}, 
$\langle a,b \rangle =\{a,b\}$. So $A=A^{\delta}$.
The assertion follows. 

(2) $\Rightarrow$ (1): By Corollary \ref{xxcor0.3}, there 
is a Poisson derivation $\delta$ such that $A^{\delta}$ is 
unimodular. Since $A^{\delta}\cong A$ for all $\delta$, $A$ 
is unimodular. Suppose to the contrary that $A$ is not rigid. 
Then there is a Poisson derivation $\delta$ not in $ZE$.
Thus, by Theorem \ref{xxthm0.2}, the modular derivation of 
$A^{\delta}$ is
$${\mathbf n}=0+(\sum_{i=1}^3 \deg(x_i))\delta-\divv(\delta) E$$
which cannot be zero as $\divv(\delta)\in \Bbbk$ [Lemma
\ref{xxlem1.2}(3)]. Therefore $A^{\delta}$ is not 
isomorphic to $A$, yielding a contradiction.

(5) $\Leftrightarrow$ (6): Under the hypothesis of (5), $A$ is 
$PH^1$-minimal. One implication follows by Lemma \ref{xxlem7.5}
and the other is clear.

(6) $\Rightarrow$ (7): See the proof of Lemma \ref{xxlem7.5}.

(7) $\Rightarrow$ (1): This is Lemma \ref{xxlem7.7}(2).

(3) $\Leftrightarrow$ (5): This is Proposition \ref{xxpro7.4}.

(1) $\Leftrightarrow$ (8): This is Corollary \ref{xxcor6.7}.

(8) $\Rightarrow$ (6,7): If $\Omega=\frac{1}{3}(x^3+y^3+z^3)
+\lambda xyz$ with $\lambda^3\neq -1$, then by the comments before
Lemma \ref{xxlem7.5}, $A$ is $PH^1$-minimal.
Hence $A$ is $H$-ozone since (5) $\Leftrightarrow$ (6). If
$\Omega$ is $x^3+y^2z$ or $x^3+x^2z+y^2z$, it follows
by Lemmas \ref{xxlem6.9} and \ref{xxlem6.10} that $A$ is $H$-ozone.
In all three cases in Example \ref{xxex6.6}, one can check
easily that $Pd(A)_{\leq -1}=0$. By Lemma \ref{xxlem7.7}(1),
$A$ has an Euler-ozone decomposition. Since $A$ is $H$-ozone,
$A$ has an Euler-Hamiltonian decomposition. 

(5) $\Rightarrow$ (1): The assertion follows from Remark 
\ref{xxrem5.2}.

(4) $\Rightarrow$ (1): The assertion follows from Remark 
\ref{xxrem5.2}

(8) $\Rightarrow$ (4): In all three cases, $Z$ is 
$\Bbbk[\Omega]$ (this is a well-known fact and a special case of it 
is \cite[Lemma 1]{MTU}, also see Lemmas \ref{xxlem7.8} and 
\ref{xxlem7.9} later), which has Hilbert series $\frac{1}{1-t^3}$.
The assertion follows from (5) since (5) is equivalent to
(8).

(4) $\Leftrightarrow$ (9): It follows from \eqref{E1.5.5}
that $h_{PH^3(A)}(t)-h_{PH^2(A)}(t)=h_{PH^0(A)}(t)-h_{PH^1(A)}(t)+t^{-3}$.
We know that $PH^0(A)=Z$. So the assertion follows 
from the fact that $h_{PH^1(A)}(t)=h_{PH^0(A)}(t)=\frac{1}{1-t^3}$
if and only if $h_{PH^3(A)}(t)-h_{PH^2(A)}(t)=t^{-3}$.
\end{proof}

Before we prove Corollary \ref{xxcor0.7}, we will need to 
calculate the Hilbert series of $PH_0(A)$ for $A$ in the 
first two cases of Example \ref{xxex6.6}.  By definition 
and \cite[p.2357]{Pe2}, the 0th Poisson homology 
of the Poisson polynomial algebra $\Bbbk[x,y,z]$ is 
\begin{equation}
\notag%\label{E7.7.1}\tag{E7.7.1}
PH_0(A)\cong \frac{A}{\{A,A\}}=\frac{A}{(H_x(A)+H_y(A)+H_z(A))}.
\end{equation}

Case 1: $\Omega=x^3+y^2z$. We use the $G$-grading introduced
in the proof of Lemma \ref{xxlem6.9}, namely, $\deg_G(x)=0,
\deg_G(y)=1$ and $\deg_G(z)=-2$. Let 
$$\begin{aligned}
a_{i,j,0,l}&=x^i y^j z^0 \Omega^l, \quad i,j,l\geq 0,\\
b_{i,0,k,l}&=x^i y^0 z^k \Omega^l, \quad i,l\geq 0, k\geq 1,\\
c_{i,1,k,l}&=x^i y   z^k \Omega^l, \quad i,l\geq 0, k\geq 1,\\
{\mathbb A}&:=\{a_{i,j,0,l} \mid i,j,l\geq 0\},\\
{\mathbb B}&:=\{b_{i,0,k,l} \mid i,l\geq 0, k\geq 1\},\\
{\mathbb C}&:=\{c_{i,1,k,l} \mid i,l\geq 0, k\geq 1\}.
\end{aligned}
$$

If $X$ is a subset of elements in $A$, we use $\Bbbk X$ to 
denote the $\Bbbk$-linear span of $X$. 

\begin{lemma}
\label{xxlem7.8} Retain the above notations.
\begin{enumerate}
\item[(1)]
${\mathbb A}\cup {\mathbb B}\cup {\mathbb C}$ is a $\Bbbk$-linear
basis of $A$.
\item[(2)]
$$\begin{aligned}
H_x(a_{i,j,0,l})&=ja_{i,j+1,0,l},\\
H_x(b_{i,0,k,l})&=(-2k)c_{i,1,k,l},\\
H_x(c_{i,1,k,l})&=
\begin{cases} (1-2k)(b_{i,0,k-1,l+1}-b_{i+3,0,k-1,l}) & k>1,\\
(1-2k) (a_{i,0,0,l+1}-a_{i+3,0,0,l}) & k=1.\end{cases}
\end{aligned}
$$
\item[(3)]
$$\begin{aligned}
H_y(a_{i,j,0,l})&=(-i) a_{i-1,j+2,0,l},\\
H_y(b_{i,0,k,l})&=
\begin{cases} (-i) b_{i-1,0,k-1,l+1}+(i+3k)b_{i+2,0,k-1,l} &k>1,\\
(-i) a_{i-1,0,0,l+1}+(i+3)a_{i+2,0,0,l} &k=1,\end{cases}\\
H_y(c_{i,1,k,l})&=
\begin{cases} (-i) c_{i-1,1,k-1,l+1}+(i+3k)c_{i+2,1,k-1,l} &k>1,\\
(-i) a_{i-1,1,0,l+1}+(i+3)a_{i+2,1,0,l} &k=1.\end{cases}
\end{aligned}
$$
\item[(4)]
$$\begin{aligned}
H_z(a_{i,j,0,l})&=
\begin{cases} 2ia_{i-1,j-1,0,l+1}+(-2i-3j)a_{i+2,j-1,0,l} & j>0,\\
2ic_{i-1,1,1,l} & j=0,\end{cases}\\
H_z(b_{i,0,k,l})&=2i c_{i-1,1,k+1,l},\\
H_z(c_{i,1,k,l})&=2ib_{i-1,0,k,l+1}+
(-3-2i)b_{i+2,0,k,l}.
\end{aligned}
$$
\item[(5)]
$A/(H_x(A)+H_y(A)+H_z(A))$ has a $\Bbbk$-linear basis
$$\{a_{0,0,0,0},a_{1,0,0,0}\}\cup
\{a_{0,1,0,l}\}_{l\ge 0}\cup \{a_{1,1,0,l}\}_{l\ge 0} \cup \{b_{0,0,k,0}\}_{k\geq 1}
\cup\{b_{1,0,k,0}\}_{k\geq 1}.$$
\item[(6)]
The Hilbert series of $PH_0(A)$ is 
$$\frac{(1+t)^3}{1-t^3}.$$
\item[(7)]
The Hilbert series of $PH^0(A)=h_Z(t)$ is 
$$\frac{1}{1-t^3}.$$
\end{enumerate}
\end{lemma}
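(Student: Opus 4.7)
For part (1), one uses the substitution $y^2z = \Omega - x^3$ iteratively: any monomial $x^i y^j z^k$ with $j \geq 2$ and $k \geq 1$ rewrites as $x^i y^{j-2} z^{k-1}\Omega - x^{i+3} y^{j-2} z^{k-1}$, strictly decreasing the $y$-degree on the non-$\Omega$ side. Iterating yields a unique expression of every element of $A$ as a $\Bbbk$-linear combination over ${\mathbb A}\cup{\mathbb B}\cup{\mathbb C}$, proving both spanning and linear independence. Parts (2)--(4) are direct Leibniz-rule computations using $\{x,y\}=y^2$, $\{z,x\}=2yz$, $\{y,z\}=3x^2$ together with centrality of $\Omega$ (so $H_\star(f\Omega^l)=H_\star(f)\Omega^l$); whenever a term of the form $y^2z^m$ appears in the output we apply the same $y^2z=\Omega-x^3$ substitution, which accounts for the case distinction $k>1$ versus $k=1$ in the stated formulas.

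For (5), the plan is systematic Gaussian elimination using (2)--(4). The Hamiltonian image clearly contains every $a_{i,j,0,l}$ with $j\geq 2$ (from $H_x(a_{i,j-1,0,l})$), every $c_{i,1,k,l}$ (from $H_x(b_{i,0,k,l})$), every $b_{i,0,k,l}$ with $i\geq 2$ (from $H_z$ applied to $c_{i-2,1,k,l}$ and iterating), and every $a_{i,0,0,l}$ with $i\geq 2$ (from $H_y$ applied to $b_{i-2,0,1,l}$ and iterating). The nontrivial step is to eliminate the higher $\Omega$-powers; this is achieved by combining pairs of relations. For instance,
\[
H_z(a_{1,1,0,l}) = 2a_{0,0,0,l+1}-5a_{3,0,0,l}, \qquad H_y(b_{1,0,1,l}) = -a_{0,0,0,l+1}+4a_{3,0,0,l},
\]
combine to give $3a_{3,0,0,l}\in\{A,A\}$ and hence $a_{0,0,0,l+1}\in\{A,A\}$. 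Analogous pairs (with $H_z(a_{2,1,0,l})$ and $H_y(b_{2,0,1,l})$, and with $H_z(c_{1,1,k,l})$ and $H_y(b_{1,0,k+1,l})$ together with their shifted versions) eliminate $a_{1,0,0,l+1}$, $b_{0,0,k,l+1}$, and $b_{1,0,k,l+1}$ for all $l\geq 0$. This shows the claimed list spans $PH_0(A)$. Linear independence of the residue classes is verified by tracking the $G$-grading of Lemma \ref{xxlem6.9} together with the total degree to rule out any further cancellation among the claimed basis vectors.

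Parts (6) and (7) are then quick. For (6),
\[
1+t+\frac{t+t^2}{1-t^3}+\frac{t+t^2}{1-t}=\frac{(1+t)^3}{1-t^3}
\]
by combining over a common denominator. For (7), centrality of $\Omega$ gives $\Bbbk[\Omega]\subseteq Z$; conversely any $f\in Z$ satisfies $H_x(f)=0$, which (writing $f=\sum c_{ijk}x^iy^jz^k$ and computing $\{x,f\}$ explicitly) forces $j=2k$ in every nonzero term, so $f\in\Bbbk[x,y^2z]$. A further application of $H_y(f)=0$ produces the recursion $(a+1)c_{a+1,c}=3(c+1)c_{a-2,c+1}$ on the coefficients in the basis $x^ay^{2c}z^c$, whose solution space is precisely the $\Bbbk$-linear span of the $\Omega$-powers. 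Hence $Z=\Bbbk[\Omega]$ and $h_Z(t)=\frac{1}{1-t^3}$.

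The main obstacle is (5): the pairwise cancellation trick needed to eliminate the higher $\Omega$-powers is specific to this Poisson structure and requires identifying the right linear combinations of Hamiltonian images, a task that resists a more conceptual treatment and must be carried out by a direct, somewhat tedious calculation.
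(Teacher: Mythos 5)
The paper offers no details for this lemma (its proof is literally ``a tedious and direct computation'' with the computation omitted), and your direct-computation route is exactly the intended one. Parts (1)--(4), (6), (7) of your sketch check out: the Leibniz-plus-substitution derivation of (2)--(4) reproduces the stated formulas, your displayed values $H_z(a_{1,1,0,l})=2a_{0,0,0,l+1}-5a_{3,0,0,l}$ and $H_y(b_{1,0,1,l})=-a_{0,0,0,l+1}+4a_{3,0,0,l}$ are correct (so the combination giving $3a_{3,0,0,l}$ works), the rational-function identity in (6) is right, and the $H_x(f)=0$, $H_y(f)=0$ analysis does give $Z=\Bbbk[\Omega]$.

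The one genuine gap is in (5), in the block $a_{i,1,0,l}=x^iy\Omega^l$ with $i\ge 2$. Your enumeration of what lies in $\{A,A\}$ covers $a_{i,j,0,l}$ with $j\ge 2$, all $c$'s, the $b$'s with $i\ge 2$, and $a_{i,0,0,l}$ with $i\ge 2$, and your pairing trick kills the higher $\Omega$-powers $a_{0,0,0,l+1}$, $a_{1,0,0,l+1}$, $b_{0,0,k,l+1}$, $b_{1,0,k,l+1}$; but nothing in this list addresses $x^iy\Omega^l$ for $i\ge 2$, so ``this shows the claimed list spans'' does not yet follow. Moreover these elements are \emph{not} in the Hamiltonian image: the only image vectors meeting that block are $H_y(c_{i,1,1,l})=-ia_{i-1,1,0,l+1}+(i+3)a_{i+2,1,0,l}$ and $H_z(a_{i,2,0,l})$, and one checks $H_z(a_{i,2,0,l})=-2H_y(c_{i,1,1,l})$, so the two-relation cancellation trick you used elsewhere is unavailable here. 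What actually happens is $a_{2,1,0,l}\in\{A,A\}$ and, for $i\ge 1$, $a_{i+2,1,0,l}\equiv \tfrac{i}{i+3}\,a_{i-1,1,0,l+1}$ modulo the image, so these classes reduce (without vanishing) to multiples of $a_{0,1,0,\ast}$ and $a_{1,1,0,\ast}$; this must be said to get spanning. Relatedly, linear independence of the residue classes needs more than ``tracking the gradings'': since each bidegree (total degree, $G$-degree) contains at most one claimed basis monomial, what remains is to verify in each bidegree that the image vectors are triangular with respect to the non-basis monomials, so no combination of them lands in the span of the claimed basis. Both points are routine to supply, and with them your argument is a correct instance of the computation the paper omits.
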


\begin{proof} This follows from a tedious and direct computation.
\end{proof}

The proof of the above lemma is routine and long, but very 
elementary, only using easy linear algebra 
arguments. To save space the details are omitted here. Note that 
Lemma \ref{xxlem7.8}(7) is a well-known fact.

\medskip

Case 2: $\Omega=x^3+x^2z+y^2z$. We need to prove a lemma 
similar to Lemma \ref{xxlem7.8}. We use the same notations
as in Case 1 except that $\Omega$ is $x^3+x^2z+y^2z$ instead of
$x^3+y^2z$.

\begin{lemma}
\label{xxlem7.9} Let $A$ be as in Example \ref{xxex6.6}(Case 2)
with potential $\Omega=x^3+x^2z+y^2z$. 
\begin{enumerate}
\item[(1)]
${\mathbb A}\cup {\mathbb B}\cup {\mathbb C}$ is a $\Bbbk$-linear
basis of $A$.
\item[(2)]
$$\begin{aligned}
H_x(a_{i,j,0,l})&=ja_{i,j+1,0,l}+ja_{i+2,j-1,0,l},\\
H_x(b_{i,0,k,l})&=(-2k)c_{i,1,k,l},\\
H_x(c_{i,1,k,l})&=
\begin{cases} (1-2k)(b_{i,0,k-1,l+1}-b_{i+3,0,k-1,l}) +2kb_{i+2,0,k,l}& k>1,\\
-(a_{i,0,0,l+1}-a_{i+3,0,0,l})+2b_{i+2,0,1,l} & k=1.\end{cases}
\end{aligned}
$$
\item[(3)]
$$\begin{aligned}
H_y(a_{i,j,0,l})&=(-i) a_{i-1,j+2,0,l}+(-i)a_{i+1,j,0,l},\\
H_y(b_{i,0,k,l})&=
\begin{cases} (-i) b_{i-1,0,k-1,l+1}+(i+3k)b_{i+2,0,k-1,l}+2kb_{i+1,0,k,l} &k>1,\\
(-i) a_{i-1,0,0,l+1}+(i+3)a_{i+2,0,0,l}+2b_{i+1,0,1,l} &k=1,\end{cases}\\
H_y(c_{i,1,k,l})&=
\begin{cases} (-i) c_{i-1,1,k-1,l+1}+(i+3k)c_{i+2,1,k-1,l}+2kc_{i+1,1,k,l}&k>1,\\
(-i) a_{i-1,1,0,l+1}+(i+3)a_{i+2,1,0,l}+2c_{i+1,1,1,l} &k=1.\end{cases}
\end{aligned}
$$
\item[(4)]
$$\begin{aligned}
H_z(a_{i,j,0,l})&=
\begin{cases} 2ic_{i-1,1,1,l} & j=0,\\
2ia_{i-1,0,0,l+1}+(-2i-3)a_{i+2,0,0,l}+(-2i-2)b_{i+1,0,1,l}& j=1,\\
2ia_{i-1,1,0,l+1}+(-2i-6)a_{i+2,1,0,l}+(-2i-4)c_{i+1,1,1,l}& j=2,\\
2ia_{i-1,j-1,0,l+1}+(-2i-3j)a_{i+2,j-1,0,l} & \\
\qquad\qquad +(-2i-2j)x^{i+1}y^{j-1}z\Omega^l& j\geq 3,\\
\end{cases}\\
H_z(b_{i,0,k,l})&=2i c_{i-1,1,k+1,l},\\
H_z(c_{i,1,k,l})&=2ib_{i-1,0,k,l+1}+
(-3-2i)b_{i+2,0,k,l}+(-2-2i)b_{i+1,0,k+1,l}.
\end{aligned}
$$
\item[(5)]
$A/(H_x(A)+H_y(A)+H_z(A))$ has a $\Bbbk$-linear basis
$$\{a_{0,0,0,0},a_{1,0,0,0}\}\cup \{a_{3i,1,0,0}\}_{i\ge 0} 
\cup \{a_{1+3i,1,0,0}\}_{i\ge 0}\cup \{b_{0,0,k,0}\}_{k\geq 1}
\cup\{b_{1,0,k,0}\}_{k\geq 1}.$$
\item[(6)]
The Hilbert series of $PH_0(A)$ is 
$$\frac{(1+t)^3}{1-t^3}.$$
\item[(7)]
The Hilbert series of $PH^0(A)=h_Z(t)$ is 
$$\frac{1}{1-t^3}.$$
\end{enumerate}
\end{lemma}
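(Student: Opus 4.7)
The plan is to follow the general strategy of the proof of Lemma \ref{xxlem7.8}, adapted to the extra $x^2 z$ summand in $\Omega$, which produces nontrivial coupling between the three families $\mathbb{A}$, $\mathbb{B}$, $\mathbb{C}$ that was absent in Case 1. For part (1), the identity $y^2 z = \Omega - x^3 - x^2 z$ lets us inductively rewrite any monomial $x^i y^j z^k$ with $j \geq 2$ and $k \geq 1$ as a $\Bbbk$-combination of monomials with strictly smaller $(j,k)$ in lex order times powers of $\Omega$, so $\mathbb{A}\cup\mathbb{B}\cup\mathbb{C}$ spans; linear independence follows from a Hilbert-series match with $\Bbbk[x,y,z]$. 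Parts (2)--(4) are mechanical computations using the Leibniz rule, the defining brackets $\{x,y\}=x^2+y^2$, $\{z,x\}=2yz$, $\{y,z\}=3x^2+2xz$, Poisson-centrality of $\Omega$ (which annihilates every $\Omega^l$ factor), and a final application of the reduction $y^2z = \Omega - x^3 - x^2z$ to normalize each output into the basis of (1); the extra terms relative to Lemma \ref{xxlem7.8} (the $j\,a_{i+2,j-1,0,l}$, the $2k\,b_{i+2,0,k,l}$, the $2k\,c_{i+1,1,k,l}$, and the $-2i-2j$ summand in the $H_z(a)$ formula) are exactly what differentiating the $x^2 z$ factor produces.

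For part (5), I would set up a triangular reduction in four stages. Stage one uses $H_z(b_{i,0,k,l}) = 2i\,c_{i-1,1,k+1,l}$ together with $H_z(a_{i,0,0,l}) = 2i\,c_{i-1,1,1,l}$ (and a short separate $H_y$-argument covering the $c$'s with $i=0$) to eliminate every $c$-element modulo the image. Stage two feeds the formulas for $H_x(c)$ and $H_y(b)$, now reduced modulo the already killed $c$'s, to kill every $b_{i,0,k,l}$ with $l \geq 1$ or $i \geq 2$, leaving only the survivors $b_{0,0,k,0}$ and $b_{1,0,k,0}$. Stage three uses the rearranged relation $a_{i+2,j-1,0,l} \equiv -a_{i,j+1,0,l}$ modulo the image coming from $H_x(a_{i,j,0,l})$ to reduce the $i$-index of every $a_{i,j,0,l}$ into $\{0,1\}$ at the cost of shifting $j$. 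Stage four uses $H_z(a_{i,1,0,l})$ and $H_y(a_{i,0,0,l})$, reduced modulo the surviving $b$'s and $c$'s, to cut the $a_{i,1,0,0}$ family down to the residues $i \equiv 0, 1 \pmod 3$ and to eliminate every $a_{i,0,0,l}$ with $i \geq 2$ or $l \geq 1$; the mod-$3$ pattern is forced by the coefficient $-2i-3j$ appearing in the $H_z$-formula. A Hilbert-series count against the claimed basis then confirms that no further relations remain.

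Parts (6) and (7) follow immediately: part (6) reduces to the geometric-series identity
$$(1+t) + \frac{t+t^2}{1-t^3} + \frac{t+t^2}{1-t} = \frac{(1+t)\bigl[(1-t^3)+t+t(1+t+t^2)\bigr]}{1-t^3} = \frac{(1+t)(1+t)^2}{1-t^3} = \frac{(1+t)^3}{1-t^3},$$
and part (7) is the standard fact that for an irreducible cubic potential the Poisson center of the associated Jacobian structure on $\Bbbk[x,y,z]$ is $\Bbbk[\Omega]$, which has Hilbert series $\frac{1}{1-t^3}$. The main obstacle will be Stage four of part (5): the mod-$3$ residue condition on the surviving $a_{i,1,0,0}$ has to be extracted cleanly from coupled relations among the $a$'s, $b$'s, and $c$'s, and the four stages must be executed in the correct order to avoid circularity. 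The essential new phenomenon, compared to Lemma \ref{xxlem7.8} where entire $\Omega$-towers $\{a_{0,1,0,l}\}_{l \geq 0}$ and $\{a_{1,1,0,l}\}_{l \geq 0}$ survive, is that the $x^2z$ contribution forces $l = 0$ and converts the surviving $y$-linear family into the rigid pattern $\{x^{3i}y\}\cup\{x^{3i+1}y\}$.
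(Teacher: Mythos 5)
Your overall route is the same one the paper takes: the paper's own proof of Lemma \ref{xxlem7.9} is a (deliberately omitted) ``tedious and direct computation,'' i.e.\ exactly your plan of establishing (1) via the rewriting $y^2z=\Omega-x^3-x^2z$ plus a generating-function count, verifying (2)--(4) by the Leibniz rule with the brackets $\{x,y\}=x^2+y^2$, $\{z,x\}=2yz$, $\{y,z\}=3x^2+2xz$, and then doing a graded linear-algebra elimination for (5); your formulas and the identity $(1+t)+\tfrac{t+t^2}{1-t^3}+\tfrac{t+t^2}{1-t}=\tfrac{(1+t)^3}{1-t^3}$ check out, and your diagnosis that the new terms come from differentiating the $x^2z$ summand is correct. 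However, there is one genuine gap in your plan for part (5), which is where all the real work sits. Your elimination stages, if carried out, only show that the claimed set \emph{spans} the quotient $A/(H_x(A)+H_y(A)+H_z(A))$; for linear independence you appeal to ``a Hilbert-series count against the claimed basis,'' but the only Hilbert series you could count against is $h_{PH_0(A)}(t)=\tfrac{(1+t)^3}{1-t^3}$, which in this paper is \emph{deduced from} part (5) (and, since $\Omega$ here does not have isolated singularities, it is not available from the earlier literature either --- the paper stresses this is the first such computation). So as written the independence half is circular. To close it you must bound the rank of the image degree by degree, e.g.\ by exhibiting, for each degree, enough explicit dependencies among the generators $H_x(v),H_y(v),H_z(v)$ (such as $H_y(a_{i,0,0,l})=-i\,H_x(a_{i-1,1,0,l})$, which also means this relation contributes nothing new in your Stage four) and row-reducing; this is precisely the tedious linear algebra the paper alludes to.

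A secondary, fixable inconsistency: in Stage three you propose to use $a_{i+2,j-1,0,l}\equiv -a_{i,j+1,0,l}$ to push the $x$-exponent into $\{0,1\}$ at the cost of raising $j$, but the normal form needed for the claimed basis (and presupposed by your Stage four and by the mod-$3$ analysis) is the opposite one: apply the same relation in the direction that lowers $j$ into $\{0,1\}$ while raising $i$, and then use $H_z(a_{i,2,0,l})=2ia_{i-1,1,0,l+1}+(-2i-6)a_{i+2,1,0,l}+(-2i-4)c_{i+1,1,1,l}$ (with the $c$'s already dead) to trade $\Omega$-powers for $x$-powers in steps of three; the case $i=0$ of this relation is what kills $x^{2+3m}y\,\Omega^l$ and produces the residues $i\equiv 0,1 \pmod 3$, while the elimination of $a_{i,0,0,l}$ with $i\ge 2$ or $l\ge 1$ comes from combining the $k=1$ cases of $H_x(c_{\ast})$ and $H_y(b_{\ast})$ with $H_z(a_{i,1,0,l})$, not from $H_y(a_{i,0,0,l})$. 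With the reduction direction corrected and the rank computation done honestly, your plan matches the paper's (omitted) argument.
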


\begin{proof} This follows from a tedious and direct computation.
\end{proof}

Similar to Lemma \ref{xxlem7.8}, the proof of Lemma \ref{xxlem7.9} 
is routine and long (even longer than the proof of Lemma \ref{xxlem7.8}), 
but still very elementary. To save space the details are omitted here.
Note that Lemma \ref{xxlem7.9}(7) is a well-known fact.

Finally we prove Corollary \ref{xxcor0.7}. 

\begin{proof}[Proof of Corollary \ref{xxcor0.7}]
(1) This is clear since $Z=\Bbbk[\Omega]$ by 
Lemmas \ref{xxlem7.8} and \ref{xxlem7.9}.

(2) Follows from Theorem \ref{xxthm0.6}((8)$\Rightarrow$(4)).

(4) By Poincar{\' e} duality \cite[Theorem 3.5]{LuWW1}, $h_{PH^3(A)}(t)=
t^{-3}h_{PH_0(A)}$. Then the assertion follows from 
Lemmas \ref{xxlem7.8} and \ref{xxlem7.9}.

(3) Follows from Parts (1,2,4) and \eqref{E1.5.5}.
\end{proof}

\subsection*{Acknowledgments}
Wang was partially supported by Simons collaboration grant 
\#688403 and Air Force Office of Scientific Research grant 
FA9550-22-1-0272. Zhang was partially supported by 
the US National Science Foundation (No. DMS-1700825 and 
DMS-2001015). Part of this research work was done during 
the first and second authors' visit to the Department of 
Mathematics at University of Washington in January 2022. 
They are grateful for the third author’s invitation and 
wish to thank University of Washington for its 
hospitality.

\providecommand{\bysame}{\leavevmode\hbox to3em{\hrulefill}\thinspace}
\providecommand{\MR}{\relax\ifhmode\unskip\space\fi MR }
\providecommand{\MRhref}[2]{%

\href{http://www.ams.org/mathscinet-getitem?mr=#1}{#2} }
\providecommand{\href}[2]{#2}

\end{document}